\documentclass[10pt,a4paper]{article}

\usepackage[T1]{fontenc}
\usepackage[utf8]{inputenc}

\usepackage{cite}

\usepackage{mathpazo} 
\linespread{1.05}        
\usepackage[scaled]{helvet} 
\usepackage{courier} 
\normalfont

\usepackage{amsmath}
\usepackage{amssymb}
\usepackage{mathtools}
\usepackage{amsthm}
\usepackage{tikz-cd}
\usepackage[hidelinks]{hyperref}	
\usepackage{graphicx}
\usepackage{subfig}
\usepackage{siunitx}
\usepackage{authblk}	
\usepackage{pinlabel}


\newtheorem{theorem}{Theorem}[section]
\newtheorem{lemma}[theorem]{Lemma}
\newtheorem{proposition}[theorem]{Proposition}

\theoremstyle{definition}
\newtheorem{definition}[theorem]{Definition}
	\newtheorem{assumption}[theorem]{Assumption}

\newtheorem{remark}[theorem]{Remark}
\usepackage{xcolor}

\providecommand{\MSC}[1]{\par\smallskip\textbf{\textbf{MSC2020: }} #1}
\providecommand{\keywords}[1]{\par\smallskip\textbf{\textbf{Keywords: }} #1}

\usepackage{booktabs}

\usepackage{tikz}  
\usetikzlibrary{matrix} 
\pgfdeclarelayer{background} 
\pgfsetlayers{background,main} 
\usepackage{verbatim}
\usepackage[normalem]{ulem}
\numberwithin{equation}{section}
\usepackage{todonotes}
\usepackage[normalem]{ulem}
\usepackage{bbm}
\usepackage{cancel}

\usepackage[shortlabels]{enumitem}

\newcommand{\cA}{\mathcal{A}}

\newcommand{\cC}{\mathcal{C}}
\newcommand{\cD}{\mathcal{D}}

\newcommand{\cG}{\mathcal{G}}
\newcommand{\cH}{\mathcal{H}}
\newcommand{\cI}{\mathcal{I}}

\newcommand{\cK}{\mathcal{K}}
\newcommand{\cL}{\mathcal{L}}

\newcommand{\cP}{\mathcal{P}}

\newcommand{\cU}{\mathcal{U}}
\newcommand{\cV}{\mathcal{V}}

\newcommand{\cX}{\mathcal{X}}
\newcommand{\cY}{\mathcal{Y}}

\newcommand{\bR}{\mathbb{R}}

\newcommand{\bfH}{\mathbf{H}}

\newcommand{\vn}[1]{\left| \! \left| #1\right| \! \right|} 

\newcommand{\ip}[2]{\langle #1,#2\rangle}

\newcommand{\bONE}{\mathbbm{1}}
\newcommand{\dd}{ \mathrm{d}}



\title{Comparison Principle for Hamilton-Jacobi-Bellman Equations via a Bootstrapping Procedure}
\author{Richard C. Kraaij\thanks{Delft Institute of Applied Mathematics, Delft University of Technology, Van Mourik Broekmanweg 6, 2628 XE Delft, The Netherlands. \emph{E-mail address}: r.c.kraaij@tudelft.nl} \quad \quad Mikola C. Schlottke\thanks{Department of Mathematics and Computer Science, Eindhoven University of Technology. \emph{E-mail address}: mikola.schlottke@outlook.com}}

\begin{document}
	
	\maketitle
	{\abstract{
			
			We study the well-posedness of Hamilton-Jacobi-Bellman equations on subsets of $\mathbb{R}^d$ in a context without boundary conditions. The Hamiltonian is given as the supremum over two parts: an internal Hamiltonian depending on an external control variable and a cost functional penalizing the control.
			
			The key feature in this paper is that the control function can be unbounded and discontinuous. This way we can treat functionals that appear e.g. in the Donsker-Varadhan theory of large deviations for occupation-time measures.
			To allow for this flexibility, we assume that the internal Hamiltonian and cost functional have controlled growth, and that they satisfy an equi-continuity estimate uniformly over compact sets in the space of controls.
			
			\smallskip
			
			In addition to establishing the comparison principle for the Hamilton-Jacobi-Bellman equation, we also prove existence, the viscosity solution being the value function with exponentially discounted running costs.
			
			\smallskip
			
			As an application, we verify the conditions on the internal Hamiltonian and cost functional in two examples.
			
			\keywords{Hamilton-Jacobi-Bellman equations, comparison principle, viscosity solutions, optimal control theory}
			
			\MSC{49L25, 35F21} 
			
	}}
	
	\section{Introduction and aim of this note} \label{section:introduction} \label{section:introductory_example}
	The main purpose of this note is to establish well-posedness for first-order nonlinear partial differential equations of Hamilton-Jacobi-Bellman type on subsets $E$ of $\mathbb{R}^d$,
	\begin{equation}\label{eq:intro:HJ-general}
		u(x)-\lambda \,\mathcal{H}\left(x,\nabla u(x)\right) = h(x),\quad x \in E\subseteq \mathbb{R}^d,\tag{\text{HJB}}
	\end{equation}
	in the context without boundary conditions and where the Hamiltonian flow generated by $\cH$ remains inside $E$. In \eqref{eq:intro:HJ-general}, $\lambda > 0$ is a scalar and  $h$ is a continuous and bounded function. The Hamiltonian $\mathcal{H}:E\times \mathbb{R}^d\to \mathbb{R}$ is given by
	\begin{equation}\label{eq:intro:variational_hamiltonian}
		\mathcal{H}(x,p) = \sup_{\theta \in \Theta}\left[\Lambda(x,p,\theta) - \mathcal{I}(x,\theta)\right],
	\end{equation}
	where $\theta \in \Theta$ plays the role of a control variable. For fixed $\theta$, the function $\Lambda$ can be interpreted as an Hamiltonian itself. We call it the \emph{internal} Hamiltonian. The function $\cI$ can be interpreted as the cost of applying the control $\theta$.
	
	\smallskip
	
	The main result of this paper is the \emph{comparison principle} for \eqref{eq:intro:HJ-general}  in order to establish uniqueness of viscosity solutions.  The standard assumption in the literature that allows one to obtain the comparison principle in the context of optimal control problems (e.g.~\cite{BaCD97} for the first order case and \cite{DLLe11} for the second order case) is that either there is a modulus of continuity $\omega$ such that
	\begin{equation}\label{eq:intro:standard-reg-estimate-on-H}
		|\mathcal{H}(x,p)-\mathcal{H}(y,p)| \leq \omega\left(|x-y|(1+|p|)\right),
	\end{equation} 
	or that $\cH$ is uniformly coercive:
	\begin{equation} \label{eqn:uniformly_coercive}
		\lim_{|p| \rightarrow \infty} \inf_x \cH(x,p) = \infty.
	\end{equation}	
	More generally, the two estimates \eqref{eq:intro:standard-reg-estimate-on-H} and \eqref{eqn:uniformly_coercive} can be combined in a single estimate, called pseudo-coercivity, see \cite[(H4), Page 34]{Ba94}, that uses the fact that the sub- and supersolution properties roughly imply that the estimate \eqref{eq:intro:standard-reg-estimate-on-H} only needs to hold for appropriately chosen $x,y$ and $p$ such that $\cH$ is finite uniformly over these chosen $x,y,p$.

	\smallskip
	
	In the Hamilton-Jacobi-Bellman context, the comparison principle is typically obtained by translating \eqref{eq:intro:standard-reg-estimate-on-H} into  conditions for $\Lambda$ and $\cI$ of~\eqref{eq:intro:variational_hamiltonian}, which include (e.g. \cite[Chapter~III]{BaCD97})
	\begin{enumerate}[(I)]
		\item \label{item:intro_conditions_old_continuity_estimate} $|\Lambda(x,p,\theta)-\Lambda(y,p,\theta)|\leq \omega_\Lambda(|x-y|(1+|p|))$, uniformly in $\theta$, and
		\item \label{item:intro_conditions_old_equi_continuity} $\cI$ is bounded, continuous and $|\cI(x,\theta) - \cI(y,\theta)| \leq \omega_\cI(|x-y|)$ for all~$\theta$.
	\end{enumerate}
	The pseudo-coercivity property is harder to translate as in this way the control on $\cH$ does not necessarily imply the same control on $\Lambda$, in particular in the case when $\cI$ is unbounded. We return on this issue below.
	
	\smallskip
	
	The estimates \ref{item:intro_conditions_old_continuity_estimate} and \ref{item:intro_conditions_old_equi_continuity} are not satisfied for Hamiltonians arising from natural examples in the theory of large deviations \cite{Ho08,DZ98} for Markov processes with two scales (see e.g. \cite{BuDuGa18,FeFoKu12,KuPo17,PeSc19} for PDE's arising from large deviations with two scales, see \cite{BaCeGh15,Gh18,DuIiSo90,FlSo86,EvSo89} for other works connection PDE's with large deviations). Indeed, in \cite{BuDuGa18} the authors mention that well-posedness of the Hamilton-Jacobi-Bellman equation for examples arising from large deviation theory is an open problem. Recent generalizations of the coercivity condition, see e.g. \cite{CuDL07}, also do not cover these examples. 
	
	\smallskip
	
	In the large deviation context, however, we typically know that we have the comparison principle for the Hamilton-Jacobi equation in terms of $\Lambda$. In addition, even though $\cI$ might be discontinuous, we do have other types of regularity for the  functional $\cI$, see e.g. \cite{Pi07}. Thus, we aim to prove a comparison principle for \eqref{eq:intro:HJ-general} on the basis of the \textit{assumption} that we have the following natural relaxations of (or the pseudo-coercive version of) \ref{item:intro_conditions_old_continuity_estimate} and \ref{item:intro_conditions_old_equi_continuity}.
	
	\begin{enumerate}[(i)]
		\item \label{item:intro_conditions_continuity_estimate} 
		For $\theta \in \Theta$, define the Hamiltonian  $\cH_\theta(x,p):= \Lambda(x,p,\theta)$.
		We have an estimate on $\cH_\theta$ that is uniform over $\theta$ in compact sets $K \subseteq \Theta$.
		This estimate, for one fixed $\theta$, is in spirit similar to the pseudo-coercivity estimate of \cite{Ba94} and is morally equivalent to the comparison principle for $\cH_\theta$.
		The uniformity is made rigorous as the \textit{continuity estimate} in Assumption \ref{assumption:results:regularity_of_V} \ref{item:assumption:slow_regularity:continuity_estimate} below. 
		\item \label{item:intro_conditions_equi_continuity} The cost functional $\mathcal{I}(x,\theta)$ satisfies an \textit{equi-continuity estimate} of the type $|\cI(x,\theta) - \cI(y,\theta)| \leq \omega_{\cI,C}(|x-y|)$ on sublevel sets $\{\mathcal{I} \leq C\}$ which we assume to be compact.
		This estimate is made rigorous in Assumption \ref{assumption:results:regularity_I} \ref{item:assumption:I:equi-cont} below.
	\end{enumerate}
	
	To work with these relaxations, we introduce a procedure that allows us to restrict our analysis to compact sets in the space of controls. In the proof of the comparison principle, the sub- and supersolution properties give boundedness of $\cH$ when evaluated in optimizing points. We then translate this boundedness to boundedness of $\cI$, which implies that the controls lie in a compact set.
	
	\smallskip
	
	The transfer of control builds upon \ref{item:intro_conditions_continuity_estimate} for $\Lambda(x,p,\theta_{x}^0)$ when we use a control $\theta_{x}^0$ that satisfies $\cI(x,\theta_{x}^0) = 0$. This we call the \textit{bootstrap procedure}: we use the comparison principle for the Hamilton-Jacobi equation in terms of $\Lambda(x,p,\theta_{x}^0)$ to shift the control on $\cH$ to control on $\Lambda$ and $\cI$ for general $\theta$. That way the comparison principle for the internal Hamiltonian~$\Lambda$ bootstraps to the comparison principle for the full Hamiltonian~$\mathcal{H}$.
	
	\smallskip
	
	Clearly, this bootstrap argument does not come for free. We pose four additional assumptions: 
	\begin{enumerate}[(i),resume]
		\item \label{item:intro_conditions_equal_growth_Lambda} The function $\Lambda$ grows roughly equally fast in $p$: For all compact sets $\widehat{K} \subseteq E$, there are constants $M,C_1,C_2$ such that
		\begin{equation*}
			\Lambda(x,p,\theta_1) \leq \max\left\{M,C_1 \Lambda(x,p,\theta_2) + C_2\right\}, 
		\end{equation*}
		for all~$x\in\widehat{K}$,~$p \in \bR^d, \, \theta_1,\theta_2 \in \Theta$.
		\item \label{item:intro_conditions_equal_growth_I} The function $\cI$ grows roughly equally fast in $x$: For all $x \in E$ and $M \geq 0$ there exists an open neighbourhood $U$ of $x$ and constants $M',C_1',C_2'$ such that 
		\begin{equation*}
			\cI(y_1,\theta) \leq \max\{M',C_1' \cI(y_2,\theta) + C_2'\}
		\end{equation*}
		for all $y_1,y_2 \in U$ and for all $\theta$ such that $\cI(x,\theta) \leq M$.
		\item \label{item:intro_zero_control} $\cI \geq 0$ and for each $x \in E$, there exists $\theta_{x}^0$ such that $\cI(x,\theta_x^0) = 0$.
		\item \label{item:intro_conditions_coercivity} The functional $\cI$ is equi-coercive in $x$: for any compact set $\hat{K} \subseteq E$ the set $\bigcup_{x \in \hat{K}} \{\theta \, | \, \cI(x,\theta) \leq C\}$ is compact.
	\end{enumerate}	
	These four assumptions are stated below as Assumptions \ref{assumption:results:regularity_of_V} \ref{item:assumption:slow_regularity:controlled_growth}, \ref{assumption:results:regularity_I} \ref{item:assumption:I:finiteness}, \ref{assumption:results:regularity_I} \ref{item:assumption:I:zero-measure}, and \ref{assumption:results:regularity_I} \ref{item:assumption:I:compact-sublevelsets}.
	To explain in more detail our argument, we give a sketch of the bootstrap procedure, which can be skipped on first reading.
	In this sketch, we refrain from performing localization arguments that are needed for non-compact $E$.
	
	\smallskip
	
	\textbf{Sketch of the bootstrap argument}
	
	\smallskip
	
	
	Let $u$ and~$v$ be a sub- and supersolution to $f - \lambda Hf = h$ respectively. We estimate $\sup_x u(x) - v(x)$ by the classical doubling of variables by means of penalizing the distance between $x$ and $y$ by some penalization $\alpha\Psi(x-y)$ and aim to send $\alpha \rightarrow \infty$. Let $x_\alpha,y_\alpha$ denote the optimizers, and denote by $p_\alpha$ the corresponding momentum $p_\alpha = \alpha \partial_x \Psi(x_\alpha -y_\alpha)$. Let $\theta_\alpha$ be the control such that $\cH(x_\alpha,p_\alpha) = \Lambda(x_\alpha,p_\alpha,\theta_\alpha)-\mathcal{I}(x_\alpha,\theta_\alpha)$ and let $\theta_{\alpha}^0$ be a control such that $\cI(y_\alpha,\theta_{\alpha}^0) = 0$, which exists due to \ref{item:intro_zero_control}.
	
	\smallskip
	
	The supersolution property for $v$ yields the following estimate that is uniform in $\alpha > 0$
	\begin{equation} \label{eqn:intro_explain_bootstrap_control_Lambda_initial}
		\infty > \frac{\vn{v - h}}{\lambda} \geq \cH(y_\alpha,p_\alpha) \geq \Lambda(y_\alpha,p_\alpha,\theta_{\alpha}^0) - \cI(y_\alpha, \theta_{\alpha}^0) = \Lambda(y_\alpha,p_\alpha,\theta_{\alpha}^0).
	\end{equation}
	Using \ref{item:intro_conditions_equal_growth_Lambda}, we obtain a uniform estimate in $\alpha$: 
	\begin{equation} \label{eqn:intro_explain_bootstrap_control_Lambda}
		\sup_\alpha\Lambda(y_\alpha,p_\alpha,\theta_\alpha) < \infty.
	\end{equation}
	which will allow us to use \ref{item:intro_conditions_continuity_estimate} if we can show that the controls $\theta_\alpha$ take their value in a compact set $K \subseteq \Theta$. For this, it suffices by \ref{item:intro_conditions_coercivity} to establish 
	\begin{equation}  \label{eqn:intro_explain_bootstrap_control_on_cI}
		\sup_\alpha \cI(x_\alpha,\theta_\alpha) < \infty.
	\end{equation}
	This, in fact, implies by \ref{item:intro_conditions_equal_growth_I} that
	\begin{equation*} 
		\sup \cI(y_\alpha,\theta_\alpha) \vee\cI(x_\alpha,\theta_\alpha) < \infty
	\end{equation*}
	so that we can also apply \ref{item:intro_conditions_equi_continuity}. This, in combination with the application of \ref{item:intro_conditions_continuity_estimate} establishes the comparison principle for $f - \lambda Hf = h$.
	
	\smallskip
	
	We are thus left to prove \eqref{eqn:intro_explain_bootstrap_control_on_cI}, which is where our bootstrap comes into play. The subsolution property for $u$ yields the following estimate that is uniform in $\alpha > 0$ 
	\begin{equation*}
		- \infty < \frac{\vn{u - h}}{\lambda} \leq \cH(x_\alpha,p_\alpha) = \Lambda(x_\alpha,p_\alpha,\theta_\alpha) - \cI(x_\alpha,\theta_\alpha).
	\end{equation*}
	Thus, \eqref{eqn:intro_explain_bootstrap_control_on_cI} follows if we can establish
	\begin{equation*}
		\sup_\alpha \Lambda(x_\alpha,p_\alpha,\theta_\alpha) < \infty,
	\end{equation*}
	which in turn (by \ref{item:intro_conditions_equal_growth_Lambda}) follows from
	\begin{equation*}
		\sup_\alpha \Lambda(x_\alpha,p_\alpha,\theta_{\alpha}^0) < \infty.
	\end{equation*}
	To establish this final estimate, note that
	\begin{equation*}
		\Lambda(x_\alpha,p_\alpha,\theta_{\alpha}^0) = \Lambda(y_\alpha,p_\alpha,\theta_{\alpha}^0) + \left[\Lambda(x_\alpha,p_\alpha,\theta_{\alpha}^0) - \Lambda(y_\alpha,p_\alpha,\theta_{\alpha}^0) \right]
	\end{equation*}
	and that we have control on the first term by means of \eqref{eqn:intro_explain_bootstrap_control_Lambda_initial} and on the second term by the pseudo-coercivity estimate of \ref{item:intro_conditions_continuity_estimate} on $\Lambda$ for the controls $\theta_{\alpha}^0$ which lie in a compact set due to \ref{item:intro_conditions_coercivity}.
	\qed
	
	\smallskip
	
	Thus, to summarize, we use the growth conditions posed on~$\Lambda$ and~$\cI$ and the pseudo-coercivity estimate for~$\Lambda$ to transfer the control on the full Hamiltonian~$\mathcal{H}$ to the functions~$\Lambda$ and the cost function~$\mathcal{I}$. Then the control on~$\Lambda$ and~$\mathcal{I}$ allows us to apply the estimates~\ref{item:intro_conditions_continuity_estimate} and~\ref{item:intro_conditions_equi_continuity} to obtain the comparison principle.

	\smallskip
	
	Next to our main result, we also state for completeness an existence result in Theorem \ref{theorem:existence_of_viscosity_solution}. The viscosity solution will be given in terms of a discounted control problem as is typical in the literature, see e.g. \cite[Chapter 3]{BaCD97}. Minor difficulties arise from working with $\cH$ that arise from irregular $\cI$.
	
	\smallskip 
	
	Finally, we show that the conditions \ref{item:intro_conditions_continuity_estimate} to \ref{item:intro_conditions_coercivity} are satisfied in two examples that arise from large deviation theory for two-scale processes. In our companion paper \cite{KrSchl20}, we will use existence and uniqueness for \eqref{eq:intro:HJ-general} for these examples to obtain large deviation principles.

	\smallskip

	\textbf{Illustration in the context of an example}
	
	\smallskip
	
	As an illustrating example, we consider a Hamilton-Jacobi-Bellman equation that arises from the large deviations of the empirical measure-flux pair of weakly coupled Markov jump processes that are coupled to fast Brownian motion on the torus. We skip the probabilistic background of this problem (See \cite{KrSchl20}), and come to the set-up relevant for this paper.
	
	\smallskip 
	
	Let $G := \{1,\dots,q\}$ be some finite set, and let $\Gamma = \{(a,b) \in G^2 \, | \, a \neq b\}$ be the set of directed bonds. Let $E := \cP(G) \times [0,\infty)^\Gamma$, where $\cP(G)$ is the set of probability measures on $G$. Let $F = \cP(S^1)$ be the set of probability measures on the one-dimensional torus. We introduce $\Lambda$ and $\cI$.
	
	\begin{itemize}
		\item 	Let $r : G \times G \times \cP(E) \times \cP(S^1) \rightarrow [0,\infty)$ be some function that codes the $\cP(E) \times \cP(S^1)$ dependent jump rate of the Markov jump process over each bond $(a,b) \in \Gamma$. The internal Hamiltonian  $\Lambda$ is given by
		\begin{equation*}
			\Lambda(\mu,p,\theta) = \sum_{(a,b) \in \Gamma} \mu_a r(a,b,\mu,\theta) \left[e^{p_b - p_a + p_{a,b}} - 1 \right].
		\end{equation*}
		\item Let $\sigma^2 : S^1 \times \cP(G) \rightarrow (0,\infty)$ be a bounded and strictly positive function. The cost function~$\mathcal{I}:E \times \Theta\to[0,\infty]$ is given by
		\begin{equation*}
			\mathcal{I}(\mu,w,\theta) = \cI(\mu,\theta) = \sup_{\substack{u\in C^\infty(S^1)\\ u > 0}} \int_{S^1} \sigma^2(y,\mu) \left(-\frac{u''(y)}{u(y)}\right)\,\theta(\dd y).
		\end{equation*}
	\end{itemize}
	Aiming for the comparison principle, we note that classical methods do not apply. 
	The functionals~$\Lambda$ are not coercive and do not satisfy~\ref{item:intro_conditions_old_continuity_estimate}. 
	We show in Appendix~\ref{appendix:pseudo_coercive} that they are also not pseudo-coercive as defined in~\cite{Ba94}. The functional $\cI$ is neither continuous nor bounded. Once can check e.g. that if $\theta$ is a finite combination of Dirac measures, then $\cI(\mu,\theta) = \infty$.
	
	\smallskip
	
	We show in Section \ref{section:verification-for-examples-of-Hamiltonians}, however, that \ref{item:intro_conditions_continuity_estimate} to \ref{item:intro_conditions_coercivity} hold, implying the comparison principle for the Hamilton-Jacobi-Bellman equations. The verification of these properties is based in part on results from \cite{Kr17,Pi07}.

	\smallskip

	\textbf{Summary and overview of the paper}
	
	\smallskip
	
	To summarize, our novel bootstrap procedure allows to treat Hamilton-Jacobi-Bellman equations where:
	\begin{itemize}
		\item We assume that the cost function~$\cI$ satisfies some regularity conditions on its sub-levelsets, but allow~$\cI$ to be possibly unbounded and discontinuous.  
		\item We assume that $\Lambda$ satisfies the continuity estimate uniformly for controls in compact sets, which in spirit extends the pseudo-coercivity estimate of \cite{Ba94}.
		This implies that $\Lambda$ can be possibly non-coercive, non-pseudo-coercive and non-Lipschitz as exhibited in our example above.
	\end{itemize}
	
	In particular, allowing discontinuity in $\cI$ allows us to treat the comparison principle for examples like the one we considered above, which so far has been out of reach.
	We believe that the \emph{bootstrap procedure} we introduce in this note has the potential to also apply to second order equations or equations in infinite dimensions. Of interest would be, for example, an extension of the results of \cite{DLLe11} who work with continuous $\cI$. For clarity of the exposition, and the already numerous applications for this setting, we stick to the finite-dimensional first-order case. We think that the key arguments that are used in the proof in Section \ref{section:comparison_principle} do not depend in a crucial way on this assumption.
	
	\smallskip
	
	The paper is organized as follows. The main results are formulated in Section \ref{section:results}. In Section \ref{section:comparison_principle} we establish the comparison principle. In Section \ref{section:construction-of-viscosity-solutions} we establish that a resolvent operator $R(\lambda)$ in terms of an exponentially discounted control problem gives rise to viscosity solutions of the  Hamilton-Jacobi-Bellman equation~\eqref{eq:intro:HJ-general}. Finally, in Section~\ref{section:verification-for-examples-of-Hamiltonians} we treat two examples including the one mentioned in the introduction.

	\section{Main Results}
	\label{section:results}
	In this section, we start with preliminaries in Section~\ref{section:preliminaries}, which includes the definition of viscosity solutions and that of the comparison principle. 
	
	\smallskip 
	
	We proceed in Section~\ref{section:results:HJ-of-Perron-Frobenius-type} with the main results: a comparison principle for the Hamilton-Jacobi-Bellman equation~\eqref{eq:intro:HJ-general} based on variational Hamiltonians of the form~\eqref{eq:intro:variational_hamiltonian}, and the existence of viscosity solutions. 
	In Section \ref{section:assumptions} we collect all assumptions that are needed for the main results.
	\subsection{Preliminaries} \label{section:preliminaries}
	For a Polish space $\cX$ we denote by $C(\cX)$ and $C_b(\cX)$ the spaces of continuous and bounded continuous functions respectively. If $\cX \subseteq \bR^d$ then we denote by $C_c^\infty(\cX)$ the space of smooth functions that vanish outside a compact set. We denote by $C_{cc}^\infty(\cX)$ the set of smooth functions that are constant outside of a compact set in $\cX$, and by $\cP(\cX)$ the space of probability measures on $\cX$. We equip $\cP(\cX)$ with the weak topology induced by convergence of integrals against bounded continuous functions.
	
	\smallskip
	
	Throughout the paper, $E$ will be the set on which we base our Hamilton-Jacobi equations. We assume that $E$ is a subset of $\bR^d$ that is a Polish space which is contained in the $\bR^d$ closure of its $\bR^d$ interior. This ensures that gradients of functions are determined by their values on $E$. Note that we do not necessarily assume that $E$ is open. We assume that the space of controls $\Theta$ is Polish.
	
	\smallskip
	
	We next introduce viscosity solutions for the Hamilton-Jacobi equation with Hamiltonians like $\mathcal{H}(x,p)$ of our introduction.
	\begin{definition}[Viscosity solutions and comparison principle] \label{definition:viscosity_solutions}
		Let $A : \cD(A) \subseteq C_b(E) \to C_b(E)$ be an operator with domain $\mathcal{D}(A)$, $\lambda > 0$ and $h \in C_b(E)$. Consider the Hamilton-Jacobi equation
		\begin{equation}
			f - \lambda A f = h. \label{eqn:differential_equation} 
		\end{equation}
		We say that $u$ is a \textit{(viscosity) subsolution} of equation \eqref{eqn:differential_equation} if $u$ is bounded from above, upper semi-continuous and if, for every $f \in \cD(A)$ there exists a sequence $x_n \in E$ such that
		\begin{gather*}
			\lim_{n \uparrow \infty} u(x_n) - f(x_n)  = \sup_x u(x) - f(x), \\
			\lim_{n \uparrow \infty} u(x_n) - \lambda A f(x_n) - h(x_n) \leq 0.
		\end{gather*}
		We say that $v$ is a \textit{(viscosity) supersolution} of equation \eqref{eqn:differential_equation} if $v$ is bounded from below, lower semi-continuous and if, for every $f \in \cD(A)$there exists a sequence $x_n \in E$ such that
		\begin{gather*}
			\lim_{n \uparrow \infty} v(x_n) - f(x_n)  = \inf_x v(x) - f(x), \\
			\lim_{n \uparrow \infty} v(x_n) - \lambda Af(x_n) - h(x_n) \geq 0.
		\end{gather*}
		We say that $u$ is a \textit{(viscosity) solution} of equation \eqref{eqn:differential_equation} if it is both a subsolution and a supersolution to \eqref{eqn:differential_equation}.
		We say that \eqref{eqn:differential_equation} satisfies the \textit{comparison principle} if for every subsolution $u$ and supersolution $v$ to \eqref{eqn:differential_equation}, we have $u \leq v$.
	\end{definition}
	\begin{remark}[Uniqueness]
		If $u$ and $v$ are two viscosity solutions of~\ref{eq:results:HJ-eq}, then we have $u\leq v$ and $v\leq u$ by the comparison principle, giving uniqueness.
	\end{remark}
	\begin{remark} \label{remark:existence of optimizers}
		Consider the definition of subsolutions. Suppose that the testfunction $f \in \cD(A)$ has compact sublevel sets, then instead of working with a sequence $x_n$, there exists $x_0  \in E$ such that
		\begin{gather*}
			u(x_0) - f(x_0)  = \sup_x u(x) - f(x), \\
			u(x_0) - \lambda A f(x_0) - h(x_0) \leq 0.
		\end{gather*}
		A similar simplification holds in the case of supersolutions.
	\end{remark}
	\begin{remark}
		For an explanatory text on the notion of viscosity solutions and fields of applications, we refer to~\cite{CIL92}.
	\end{remark}
	
	\begin{remark}
		At present, we refrain from working with unbounded viscosity solutions as we use the upper bound on subsolutions and the lower bound on supersolutions in the proof of Theorem \ref{theorem:comparison_principle_variational}. We can, however, imagine that the methods presented in this paper can be generalized if $u$ and $v$ grow slower than the containment function $\Upsilon$ that will be defined below in Definition \ref{def:results:compact-containment}.
	\end{remark}
	
	\subsection{Main results: comparison and existence}
	\label{section:results:HJ-of-Perron-Frobenius-type}
	In this section, we state our main results: the comparison principle in Theorem \ref{theorem:comparison_principle_variational}, and existence of solutions in Theorem \ref{theorem:existence_of_viscosity_solution}.
	\smallskip
	
	Consider the variational Hamiltonian $\cH : E \times \bR^d \rightarrow  \bR$ given by
	\begin{equation}\label{eq:results:variational_hamiltonian}
		\mathcal{H}(x,p) = \sup_{\theta \in \Theta}\left[\Lambda(x,p,\theta) - \mathcal{I}(x,\theta)\right].
	\end{equation}
	The precise assumptions on the maps $\Lambda$ and $\mathcal{I}$ are formulated in Section~\ref{section:assumptions}. 
	\begin{theorem}[Comparison principle]
		\label{theorem:comparison_principle_variational}
		Consider the map $\cH : E \times \bR^d \rightarrow \bR$ as in \eqref{eq:results:variational_hamiltonian}. Suppose that Assumptions~\ref{assumption:results:regularity_of_V} and~\ref{assumption:results:regularity_I} are satisfied for $\Lambda$ and $I$.
		Define the operator $\bfH f(x) := \cH(x,\nabla f(x))$ with domain $\cD(\bfH) = C_{cc}^\infty(E)$. Then:
		\begin{enumerate}[(a)]
			\item For any $f \in \cD(\bfH)$ the map $x\mapsto \bfH f(x)$ is continuous.
			\item For any $h \in C_b(E)$ and $\lambda > 0$, the comparison principle holds for
			\begin{equation}\label{eq:results:HJ-eq}
				f - \lambda \, \bfH f = h.
			\end{equation}
		\end{enumerate}
	\end{theorem}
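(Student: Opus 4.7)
The plan is to prove part (a) directly and part (b) via a doubling of variables argument whose closing step is the bootstrap procedure already sketched in the introduction.

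For part (a), fix $f\in C_{cc}^\infty(E)$, so $\nabla f$ is bounded and vanishes outside a compact $K_f\subseteq E$. Writing $\mathbf{H}f(x)=\sup_\theta[\Lambda(x,\nabla f(x),\theta)-\cI(x,\theta)]$, the zero-control assumption (vi) yields the lower bound $\mathbf{H}f(x)\geq\Lambda(x,\nabla f(x),\theta_x^0)$, and equi-coercivity (vii) on a compact neighbourhood of any $x_0$ shows that near-optimizers $\theta$ with $\cI(x,\theta)\leq C$ lie in a fixed compact $K\subseteq\Theta$. Upper and lower semicontinuity of $\mathbf{H}f$ then follow from the equi-continuity estimate (ii) for $\cI$ on the sublevel set $\{\cI\leq C\}$ together with the continuity of $\Lambda(\cdot,\cdot,\theta)$ uniformly in $\theta\in K$ supplied by (i).

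For part (b), suppose by contradiction that $\delta:=\sup_E(u-v)>0$ for a subsolution $u$ and supersolution $v$. Choose a containment function $\Upsilon$ (from the earlier definition referenced in the paper) and, for $\alpha>0$ and small $\epsilon>0$, consider
\[
\Phi_{\alpha,\epsilon}(x,y)=u(x)-v(y)-\tfrac{\alpha}{2}|x-y|^2-\epsilon\bigl(\Upsilon(x)+\Upsilon(y)\bigr).
\]
By the containment property, maximizers $(x_\alpha,y_\alpha)$ of $\Phi_{\alpha,\epsilon}$ exist and stay in a compact set; standard arguments give $\alpha|x_\alpha-y_\alpha|^2\to0$ as $\alpha\to\infty$ and $x_\alpha,y_\alpha\to x_\epsilon$ up to subsequences. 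Using the test functions obtained by freezing the remaining variable, the sub- and supersolution properties yield, with $p_\alpha:=\alpha(x_\alpha-y_\alpha)$,
\[
u(x_\alpha)-h(x_\alpha)\leq\lambda\,\cH\bigl(x_\alpha,p_\alpha+\epsilon\nabla\Upsilon(x_\alpha)\bigr),\qquad v(y_\alpha)-h(y_\alpha)\geq\lambda\,\cH\bigl(y_\alpha,p_\alpha-\epsilon\nabla\Upsilon(y_\alpha)\bigr).
\]
The task is to pass $\alpha\to\infty$ and then $\epsilon\to0$ in the difference of these inequalities and derive a contradiction with $\delta>0$.

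This is where the bootstrap enters. Pick $\theta_\alpha\in\Theta$ almost realising the supremum in the first inequality, and a zero-cost control $\theta_\alpha^0$ at $y_\alpha$ from (vi). The supersolution inequality combined with $\cI(y_\alpha,\theta_\alpha^0)=0$ and assumption (iv) gives $\sup_\alpha\Lambda(y_\alpha,p_\alpha,\theta_\alpha^0)<\infty$, and adding the difference $\Lambda(x_\alpha,p_\alpha,\theta_\alpha^0)-\Lambda(y_\alpha,p_\alpha,\theta_\alpha^0)$ controlled by the continuity estimate (i) on the compact family $\{\theta_\alpha^0\}$ yields $\sup_\alpha\Lambda(x_\alpha,p_\alpha,\theta_\alpha^0)<\infty$. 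Assumption (iv) transfers this to $\sup_\alpha\Lambda(x_\alpha,p_\alpha,\theta_\alpha)<\infty$, whence the subsolution inequality produces $\sup_\alpha\cI(x_\alpha,\theta_\alpha)<\infty$. Equi-coercivity (vii) then confines $\theta_\alpha$ to a compact $K\subseteq\Theta$, while (v) gives $\sup_\alpha\cI(y_\alpha,\theta_\alpha)<\infty$. Now (i) and (ii) are both available on $K$, and inserting them into the difference of the viscosity inequalities produces the usual estimate whose right-hand side vanishes as $\alpha\to\infty$ thanks to $\alpha|x_\alpha-y_\alpha|^2\to0$; the subsequent limit $\epsilon\to0$ yields $\delta\leq0$, a contradiction.

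The main obstacle is orchestrating this bootstrap carefully. The subsolution property only delivers a lower bound of the form $\Lambda(x_\alpha,p_\alpha,\theta_\alpha)-\cI(x_\alpha,\theta_\alpha)\geq-C$, which on its own gives no control on $\cI(x_\alpha,\theta_\alpha)$; one must first obtain an \emph{upper} bound on $\Lambda(x_\alpha,p_\alpha,\theta_\alpha)$, and this is only accessible indirectly via the growth comparison (iv) and the zero-cost control $\theta_\alpha^0$. The key technical subtlety is that the continuity estimate (i) is only uniform for controls in compact subsets of $\Theta$, so it may be applied to $\theta_\alpha^0$ (which lie in a compact set by (vii) applied at $\cI=0$) before one knows that $\theta_\alpha$ does, and only after the two-step transfer above can (i) be applied to $\theta_\alpha$ itself. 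Getting the order of these applications right, and checking that the localisation introduced by $\epsilon\Upsilon$ does not disturb the bootstrap, is the most delicate part of the argument.
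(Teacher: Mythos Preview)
Your proposal correctly isolates the bootstrap as the core of the argument and has the order of operations right, but there is a genuine gap in how your localization interacts with the continuity estimate.

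With the additive localization $\epsilon(\Upsilon(x)+\Upsilon(y))$, the momenta that actually appear in the sub- and supersolution inequalities are $p_\alpha^1 = p_\alpha + \epsilon\nabla\Upsilon(x_\alpha)$ and $p_\alpha^2 = p_\alpha - \epsilon\nabla\Upsilon(y_\alpha)$, yet you run the entire bootstrap with the bare $p_\alpha$. These are not interchangeable: although $\nabla\Upsilon$ is bounded on the compact set containing the $x_\alpha,y_\alpha$, the sequence $p_\alpha$ need not be bounded, and $\Lambda$ may grow exponentially in $p$ (as in Proposition~\ref{prop:verify-ex:Lambda_exponential}), so a bounded shift of $p_\alpha$ can produce an unbounded change in $\Lambda$. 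More fundamentally, the continuity estimate (Definition~\ref{def:results:continuity_estimate}) is an abstract hypothesis formulated only for momenta of the exact form $\alpha\nabla_x\Psi(x_{\varepsilon,\alpha},y_{\varepsilon,\alpha})$ and $-\alpha\nabla_y\Psi(x_{\varepsilon,\alpha},y_{\varepsilon,\alpha})$ coming from a penalization function; it gives no modulus of continuity and cannot be invoked for the perturbed momenta $p_\alpha\pm\epsilon\nabla\Upsilon$. You flag this as ``delicate'' but do not resolve it, and with additive localization there is no obvious fix.

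The paper handles this by a different localization: it defines auxiliary operators $H_\dagger$ and $H_\ddagger$ whose test functions are \emph{convex combinations} $(1-\varepsilon)f+\varepsilon\Upsilon$. Convexity of $p\mapsto\cH(x,p)$ then gives $\cH(x,(1-\varepsilon)\nabla f+\varepsilon\nabla\Upsilon)\leq(1-\varepsilon)\cH(x,\nabla f)+\varepsilon C_\Upsilon$, so the containment contribution separates off as a bounded error and the momenta entering the bootstrap are exactly $\alpha\nabla\Psi$, matching the hypothesis of the continuity estimate. The doubling is carried out for $H_\dagger,H_\ddagger$ with rescaled functions $u/(1-\varepsilon)$, $v/(1+\varepsilon)$, and the bootstrap proceeds without perturbation terms. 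A secondary issue you do not address is that your frozen-variable test functions must lie in $\cD(\bfH)=C_{cc}^\infty(E)$, whereas $\Upsilon$ is not constant outside a compact set; the paper handles this by a truncation argument (Lemma~\ref{lemma:viscosity_solutions_compactify2}) that is straightforward once the convex-combination structure is in place.
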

	\begin{remark}[Domain]
		The comparison principle holds with any domain that satisfies $C_{cc}^\infty(E)\subseteq \mathcal{D}(\mathbf{H})\subseteq C^1_b(E)$. We state it with $C^\infty_{cc}(E)$ to connect it with the existence result of Theorem~\ref{theorem:existence_of_viscosity_solution}, where we need to work with test functions whose gradients have compact support.
	\end{remark}
	Consider the Legendre dual $\cL : E \times \bR^d \rightarrow [0,\infty]$ of the Hamiltonian,
	\begin{equation*}
		\cL(x,v) := \sup_{p\in\mathbb{R}^d} \left[\ip{p}{v} - \cH(x,p)\right],
	\end{equation*}
	and denote the collection of absolutely continuous paths in $E$ by $\cA\cC$.
	\begin{theorem}[Existence of viscosity solution] \label{theorem:existence_of_viscosity_solution}
		Consider $\cH : E \times \bR^d \rightarrow \bR$ as in \eqref{eq:results:variational_hamiltonian}. Suppose that Assumptions~\ref{assumption:results:regularity_of_V} and~\ref{assumption:results:regularity_I} are satisfied for $\Lambda$ and~$\mathcal{I}$, and that $\mathcal{H}$ satisfies Assumption~\ref{assumption:Hamiltonian_vector_field}. For each $\lambda > 0$, let $R(\lambda)$ be the operator
		\begin{equation*}
			R(\lambda) h(x) = \sup_{\substack{\gamma \in \mathcal{A}\mathcal{C}\\ \gamma(0) = x}} \int_0^\infty \lambda^{-1} e^{-\lambda^{-1}t} \left[h(\gamma(t)) - \int_0^t \mathcal{L}(\gamma(s),\dot{\gamma}(s))\right] \, \dd t.
		\end{equation*}
		Then $R(\lambda)h$ is the unique viscosity solution to $f - \lambda \bfH f = h$.
	\end{theorem}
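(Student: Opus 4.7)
Uniqueness is immediate from Theorem~\ref{theorem:comparison_principle_variational}, so the substance of the proof is to show that $R(\lambda)h$ is a viscosity solution of $f-\lambda\bfH f = h$. The first step I would take is to put the value function in a more tractable form: by Fubini the iterated integral $\int_0^\infty \lambda^{-1}e^{-\lambda^{-1}t}\int_0^t\mathcal{L}(\gamma(s),\dot\gamma(s))\,\dd s\,\dd t$ equals $\int_0^\infty e^{-\lambda^{-1}s}\mathcal{L}(\gamma(s),\dot\gamma(s))\,\dd s$, and hence
\[
R(\lambda)h(x) = \sup_{\substack{\gamma\in\mathcal{A}\mathcal{C}\\ \gamma(0)=x}} \int_0^\infty e^{-\lambda^{-1}t}\bigl[\lambda^{-1}h(\gamma(t))-\mathcal{L}(\gamma(t),\dot\gamma(t))\bigr]\,\dd t,
\]
which is the standard exponentially discounted control problem. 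Testing against the constant trajectory and using that $\mathcal{L}$ is bounded from below (a consequence of the zero-cost control $\theta_x^0$ from Assumption~\ref{assumption:results:regularity_I}) yields $\vn{R(\lambda)h}\leq C_{\lambda,\vn{h}}$.

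\smallskip

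The key intermediate step is the dynamic programming principle
\[
R(\lambda)h(x) = \sup_{\substack{\gamma\in\mathcal{A}\mathcal{C}\\\gamma(0)=x}}\left\{\int_0^s e^{-\lambda^{-1}t}\bigl[\lambda^{-1}h(\gamma(t))-\mathcal{L}(\gamma(t),\dot\gamma(t))\bigr]\,\dd t + e^{-\lambda^{-1}s}R(\lambda)h(\gamma(s))\right\}
\]
for every $s\geq 0$, which I would establish by the classical concatenation argument in both directions, with a measurable selection of approximate optimizers for the $\leq$-direction. Assumption~\ref{assumption:Hamiltonian_vector_field} supplies the containment function needed to keep the relevant trajectories inside $E$. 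From the dynamic programming principle I would then derive continuity of $R(\lambda)h$ by a perturbation argument: given nearby starting points $x,y$, a near-optimal trajectory for $x$ is translated to one starting at $y$, and the change in running cost is controlled via the equi-continuity estimates on $\Lambda$ and $\mathcal{I}$ from Assumptions~\ref{assumption:results:regularity_of_V} and~\ref{assumption:results:regularity_I}, after restricting attention to trajectories with running cost in a fixed sub-level set of $\mathcal{I}$.

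\smallskip

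With continuity and boundedness in hand, the viscosity solution property follows in the usual way. For the subsolution direction, let $f\in C^\infty_{cc}(E)$ and reduce (via the containment function of Assumption~\ref{assumption:Hamiltonian_vector_field}, in the spirit of Remark~\ref{remark:existence of optimizers}) to the case that $R(\lambda)h-f$ attains its supremum at some $x_0\in E$. Substituting $R(\lambda)h(\gamma(s))\leq R(\lambda)h(x_0)+f(\gamma(s))-f(x_0)$ into the dynamic programming identity yields
\[
(1-e^{-\lambda^{-1}s})R(\lambda)h(x_0)\leq \sup_\gamma\left\{\int_0^s e^{-\lambda^{-1}t}\bigl[\lambda^{-1}h(\gamma(t))-\mathcal{L}(\gamma(t),\dot\gamma(t))\bigr]\,\dd t+e^{-\lambda^{-1}s}[f(\gamma(s))-f(x_0)]\right\},
\]
and dividing by $s$ and letting $s\downarrow 0$ produces $\lambda^{-1}R(\lambda)h(x_0)\leq \lambda^{-1}h(x_0)+\mathcal{H}(x_0,\nabla f(x_0))$, the required subsolution inequality. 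The supersolution direction is analogous: evaluate the dynamic programming inequality along any single trajectory $\gamma$ with $\gamma(0)=x_0$ (for instance $\gamma(t)=x_0+tv$ on a short time interval), use $R(\lambda)h(\gamma(s))\geq R(\lambda)h(x_0)+f(\gamma(s))-f(x_0)$, divide by $s$, let $s\downarrow 0$, and take the supremum over $v$ to recover $\mathcal{H}(x_0,\nabla f(x_0))$ on the right.

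\smallskip

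The main obstacles I anticipate are the continuity of $R(\lambda)h$ and the passage $s\downarrow 0$ in the viscosity computation, both of which are sensitive to the fact that $\mathcal{I}$, and hence $\mathcal{L}$, may be unbounded and only lower semi-continuous. Both are handled by localization to sub-level sets of $\mathcal{L}$: these are compact by Assumption~\ref{assumption:results:regularity_I}\ref{item:assumption:I:compact-sublevelsets}, so near-optimal trajectories have controls in a compact subset of $\Theta$, and on this set the equi-continuity estimate \ref{item:assumption:I:equi-cont} together with the continuity estimate on $\Lambda$ from Assumption~\ref{assumption:results:regularity_of_V} justifies both the perturbation argument giving continuity of $R(\lambda)h$ and the interchange of $\sup$ and $\lim_{s\downarrow 0}$ in the dynamic programming computation.
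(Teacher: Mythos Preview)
Your approach is the classical one---derive the dynamic programming principle and read off the viscosity inequalities by letting $s\downarrow 0$---whereas the paper takes a different route: it invokes the abstract resolvent framework of \cite[Chapter~8]{FK06} and reduces everything to verifying their Conditions~8.9--8.11. The core of the paper's argument is the verification of Condition~8.11, which amounts to constructing, for each test function $f\in C^\infty_{cc}(E)$ and each initial point $x_0$, a trajectory solving the differential inclusion $\dot x(t)\in\partial_p\cH(x(t),\nabla f(x(t)))$ and remaining in $E$. This is done via Deimling's existence theorem for differential inclusions, using Assumption~\ref{assumption:Hamiltonian_vector_field} (through Lemma~\ref{lemma:Hamiltonian_vector_flow_from_Lambda}) to guarantee $\partial_p\cH(x,p)\subseteq T_E(x)$ and the containment function from \ref{item:assumption:slow_regularity:compact_containment} to obtain an a~priori bound that compensates for the lack of linear growth of the field.

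Your outline has two concrete gaps. First, you conflate two distinct mechanisms: Assumption~\ref{assumption:Hamiltonian_vector_field} is not about a containment function---it asserts that $E$ is closed convex and that $\partial_p\Lambda(x,p,\theta)\subseteq T_E(x)$, which is what forces admissible trajectories to remain in $E$ when $E$ has boundary. The containment function lives in Assumption~\ref{assumption:results:regularity_of_V}\ref{item:assumption:slow_regularity:compact_containment} and controls escape to infinity. Relatedly, in your supersolution step the straight line $\gamma(t)=x_0+tv$ need not lie in $E$ when $x_0$ is a boundary point, so taking the supremum over all $v\in\bR^d$ is not justified; you must argue that directions outside $T_E(x_0)$ carry infinite Lagrangian cost and that directions in $T_E(x_0)$ admit short admissible trajectories.

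Second, and more seriously, your subsolution step hides the main difficulty. After dividing by $s$ you must show
\[
\limsup_{s\downarrow 0}\ \frac{1}{s}\sup_{\gamma}\left\{\int_0^s e^{-t/\lambda}\bigl[\lambda^{-1}h(\gamma(t))-\cL(\gamma(t),\dot\gamma(t))\bigr]\,\dd t + e^{-s/\lambda}\bigl[f(\gamma(s))-f(x_0)\bigr]\right\}\leq \lambda^{-1}h(x_0)+\cH(x_0,\nabla f(x_0)),
\]
and the localization you sketch (restricting to sub-level sets of $\cI$ and invoking the equi-continuity assumptions) does not by itself control this supremum: one needs an actual near-optimal trajectory whose velocity realises the Legendre duality $\ip{\dot\gamma}{\nabla f}-\cL(\gamma,\dot\gamma)=\cH(\gamma,\nabla f(\gamma))$. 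Producing such a trajectory is precisely the content of the differential-inclusion argument the paper carries out, and it is where Assumption~\ref{assumption:Hamiltonian_vector_field} and the subdifferential analysis of Lemma~\ref{lemma:Hamiltonian_vector_flow_from_Lambda} are genuinely used. Your proposal would need an analogue of that construction to close the argument.
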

	\begin{remark}
		The form of the solution is typical, see for example Section III.2 in \cite{BaCD97}. It is the value function obtained by an optimization problem with exponentially discounted cost. The difficulty of the proof of Theorem~\ref{theorem:existence_of_viscosity_solution} lies in treating the irregular form of $\cH$. 
	\end{remark}
	\subsection{Assumptions} \label{section:assumptions}
	In this section, we formulate and comment on the assumptions imposed on the Hamiltonians defined in the previous sections. 
	The key assumptions were already mentioned in the sketch of the bootstrap method in the introduction. To these, we add minor additional assumptions on the regularity of $\Lambda$ and $\cI$ in Assumptions \ref{assumption:results:regularity_of_V} and \ref{assumption:results:regularity_I}. 
	Finally, Assumption \ref{assumption:Hamiltonian_vector_field} will imply that even if $E$ has a boundary, no boundary conditions are necessary for the construction of the viscosity solution.
	\smallskip
	
	We start with the \emph{continuity estimate}  for $\Lambda$, which was briefly discussed in \ref{item:intro_conditions_continuity_estimate} in the introduction. 
	To that end, we first introduce a function that is used in the typical argument that doubles the number of variables.
	
	\begin{definition}[Penalization function]\label{def:results:good_penalization_function}
		We say that $\Psi : E^2 \rightarrow [0,\infty)$ is a \textit{ penalization function} if $\Psi \in C^1(E^2)$ and if $x = y$ if and only if $\Psi(x,y) = 0$.
	\end{definition}
	
	We will apply the definition below for $\cG = \Lambda$.
	
	\begin{definition}[Continuity estimate] \label{def:results:continuity_estimate}
		Let  $\Psi$ be a penalization function and let $\cG: E \times \mathbb{R}^d\times\Theta \rightarrow \bR$, $(x,p,\theta)\mapsto \cG(x,p,\theta)$ be a function. Suppose that for each $\varepsilon > 0$, 
		there is a sequence of positive real numbers $\alpha \rightarrow \infty$. For sake of readability, we suppress the dependence on $\varepsilon$ in our notation.
		
		Suppose that for each $\varepsilon$ and $\alpha$ we have variables $(x_{\varepsilon,\alpha},y_{\varepsilon,\alpha})$ in $E^2$ and variables $\theta_{\varepsilon,\alpha}$ in $\Theta$. We say that this collection is \textit{fundamental} for $\cG$ with respect to $\Psi$ if:
		\begin{enumerate}[label = (C\arabic*)]
			\item \label{item:def:continuity_estimate:1} For each $\varepsilon$, there are compact sets $K_\varepsilon \subseteq E$ and $\widehat{K}_\varepsilon\subseteq\Theta$ such that for all $\alpha$ we have $x_{\varepsilon,\alpha},y_{\varepsilon,\alpha} \in K_\varepsilon$ and $\theta_{\varepsilon,\alpha}\in\widehat{K}_\varepsilon$.
			\item \label{item:def:continuity_estimate:2} 
			For each $\varepsilon > 0$, we have $\lim_{\alpha \rightarrow \infty} \alpha \Psi(x_{\varepsilon,\alpha},y_{\varepsilon,\alpha}) = 0$. For any limit point $(x_\varepsilon,y_\varepsilon)$ of $(x_{\varepsilon,\alpha},y_{\varepsilon,\alpha})$, we have $\Psi(x_{\varepsilon,\alpha},y_{\varepsilon,\alpha}) = 0$.
			\item \label{item:def:continuity_estimate:3} We have 
			for all $\varepsilon > 0$
			\begin{align} 
				& \sup_{\alpha} \cG\left(y_{\varepsilon,\alpha}, - \alpha (\nabla \Psi(x_{\varepsilon,\alpha},\cdot))(y_{\varepsilon,\alpha}),\theta_{\varepsilon,\alpha}\right) < \infty, \label{eqn:control_on_Gbasic_sup} \\
				& \inf_\alpha \cG\left(x_{\varepsilon,\alpha}, \alpha (\nabla \Psi(\cdot,y_{\varepsilon,\alpha}))(x_{\varepsilon,\alpha}),\theta_{\varepsilon,\alpha}\right) > - \infty. \label{eqn:control_on_Gbasic_inf} 	
			\end{align} \label{itemize:funamental_inequality_control_upper_bound}
		\end{enumerate}
		We say that $\cG$ satisfies the \textit{continuity estimate} if for every fundamental collection of variables we have for each $\varepsilon > 0$ that
		\begin{multline}\label{equation:Xi_negative_liminf}
			\liminf_{\alpha \rightarrow \infty} \cG\left(x_{\varepsilon,\alpha}, \alpha (\nabla \Psi(\cdot,y_{\varepsilon,\alpha}))(x_{\varepsilon,\alpha}),\theta_{\varepsilon,\alpha}\right) \\
			- \cG\left(y_{\varepsilon,\alpha}, - \alpha (\nabla \Psi(x_{\varepsilon,\alpha},\cdot))(y_{\varepsilon,\alpha}),\theta_{\varepsilon,\alpha}\right) \leq 0.
		\end{multline}
	\end{definition}
	\begin{remark}
		In Appendix \ref{section:continuity_estimate_general}, we state a slightly more general continuity estimate on the basis of two penalization functions. A proof of a comparison principle on the basis of two penalization functions was given in \cite{Kr17}.
	\end{remark}
	The continuity estimate is indeed exactly the estimate that one would perform when proving the comparison principle for the Hamilton-Jacobi equation in terms of the internal Hamiltonian (disregarding the control $\theta$). 
	Typically, the control on $(x_{\varepsilon,\alpha},y_{\varepsilon,\alpha})$ that is assumed in \ref{item:def:continuity_estimate:1} and \ref{item:def:continuity_estimate:2} is obtained from choosing $(x_{\varepsilon,\alpha},y_{\varepsilon,\alpha})$ as optimizers in the doubling of variables procedure (see Lemma~\ref{lemma:doubling_lemma}), and the control that is assumed in~\ref{item:def:continuity_estimate:3} is obtained by using the viscosity sub- and supersolution properties in the proof of the comparison principle. The required restriction to compact sets in Lemma~\ref{lemma:doubling_lemma} is obtained by including in the test functions a \emph{containment function}.
	
	\begin{definition}[Containment function]\label{def:results:compact-containment}
		We say that a function $\Upsilon : E \rightarrow [0,\infty]$ is a \textit{containment function} for $\Lambda$ if $\Upsilon \in C^1(E)$ and there is a constant $c_\Upsilon$ such that
		\begin{itemize}
			\item For every $c \geq 0$, the set $\{x \, | \, \Upsilon(x) \leq c\}$ is compact;
			\item We have $\sup_\theta\sup_x \Lambda\left(x,\nabla \Upsilon(x),\theta\right) \leq c_\Upsilon$.
		\end{itemize}	
	\end{definition}
	To conclude, our assumption on $\Lambda$ contains the continuity estimate, the controlled growth, the existence of a containment function and two regularity properties.
	\begin{assumption}\label{assumption:results:regularity_of_V}
		The function $\Lambda:E\times\mathbb{R}^d\times\Theta\to\mathbb{R}$ in the Hamiltonian~\eqref{eq:results:variational_hamiltonian} satisfies the following.
		\begin{enumerate}[label=($\Lambda$\arabic*)]
			\item \label{item:assumption:slow_regularity:continuity} The map $\Lambda : E\times\mathbb{R}^d\times\Theta\to\mathbb{R}$ is continuous.
			\item \label{item:assumption:slow_regularity:convexity} For any $x\in E$ and $\theta\in\Theta$, the map $p\mapsto \Lambda(x,p,\theta)$ is convex. We have $\Lambda(x,0,\theta) = 0$ for all $x\in E$ and all $\theta \in \Theta$.
			\item \label{item:assumption:slow_regularity:compact_containment} There exists a containment function $\Upsilon : E \to [0,\infty)$ for $\Lambda$ in the sense of Definition~\ref{def:results:compact-containment}.
			\item \label{item:assumption:slow_regularity:controlled_growth} 
			For every compact set $K \subseteq E$, there exist constants $M, C_1, C_2 \geq 0$  such that for all $x \in K$, $p \in \mathbb{R}^d$ and all $\theta_1,\theta_2\in\Theta$, we have 
			\begin{equation*}
				\Lambda(x,p,\theta_1) \leq \max\left\{M,C_1 \Lambda(x,p,\theta_2) + C_2\right\}.
			\end{equation*}
			\item \label{item:assumption:slow_regularity:continuity_estimate} The function $\Lambda$ satisfies the continuity estimate in the sense of Definition~\ref{def:results:continuity_estimate}, or in the extended sense of Definition~\ref{def:fundamental_inequality_extended}.
		\end{enumerate} 
	\end{assumption}
	Our second main assumption is on the properties of $\cI$. For a compact set~$K\subseteq E$ and a constant~$M\geq 0$, write
	\begin{equation} \label{eqn:def:sublevelsets_I}
		\Theta_{K,M}:= \bigcup_{x \in K} \left\{\theta\in\Theta \, \middle| \,  \mathcal{I}(x,\theta) \leq M \right\},
	\end{equation}
	and
	\begin{equation}
		\Omega_{K,M}:= \bigcap_{x \in K} \left\{\theta\in\Theta \, \middle| \,  \mathcal{I}(x,\theta) \leq M \right\}.
	\end{equation}
	\begin{assumption}\label{assumption:results:regularity_I}
		The functional $\mathcal{I}:E\times\Theta \to [0,\infty]$ in~\eqref{eq:results:variational_hamiltonian} satisfies the following.
		\begin{enumerate}[label=($\mathcal{I}$\arabic*)]
			\item \label{item:assumption:I:lsc} The map $(x,\theta) \mapsto \mathcal{I}(x,\theta)$ is lower semi-continuous on $E \times \Theta$.
			\item \label{item:assumption:I:zero-measure} For any $x\in E$, there exists a control $\theta_{x}^0 \in\Theta$ such that $\mathcal{I}(x,\theta_{x}^0) = 0$. 
			\item \label{item:assumption:I:compact-sublevelsets} For any compact set $K \subseteq E$ and constant $M \geq 0$ the set $\Theta_{K,M}$ is compact.
			\item \label{item:assumption:I:finiteness}
			For each $x \in E$ and constant $M \geq 0$, there exists an open neighbourhood $U \subseteq E$ of $x$ and constants $M',C_1',C_2' \geq 0$ such that for all $y_1,y_2 \in U$ and $\theta \in \Theta_{\{x\},M}$ we have
			\begin{equation*}
				\cI(y_1,\theta) \leq \max \left\{M', C_1'\cI(y_2,\theta) + C_2' \right\}.
			\end{equation*}
			\item \label{item:assumption:I:equi-cont} For every compact set $K \subseteq E$ and each $M \geq 0$ the collection of functions $\{\cI(\cdot,\theta)\}_{\theta \in \Omega_{K,M}}$
			is equicontinuous. That is: for all $\varepsilon > 0$, there is a $\delta > 0$ such that for all $\theta \in \Omega_{K,M}$ and $x,y \in K$ such that $d(x,y) \leq \delta$ we have $|\mathcal{I}(x,\theta) - \mathcal{I}(y,\theta)| \leq \varepsilon$.
		\end{enumerate}
	\end{assumption}

	To establish the existence of viscosity solutions, we will impose one additional assumption. For a general convex functional $p \mapsto \Phi(p)$ we denote
	\begin{multline} \label{eqn:subdifferential}
		\partial_p \Phi(p_0)
		:= \left\{
		\xi \in \mathbb{R}^d \,:\, \Phi(p) \geq \Phi(p_0) + \xi \cdot (p-p_0) \quad (\forall p \in \mathbb{R}^d)
		\right\}.
	\end{multline}

	\begin{definition} \label{definition:tangent_cone}
		The tangent cone (sometimes also called \textit{Bouligand cotingent cone}) to $E$ in $\bR^d$ at $x$ is
		\begin{equation*}
			T_E(x) := \left\{z \in \bR^d \, \middle| \, \liminf_{\lambda \downarrow 0} \frac{d(x + \lambda z, E)}{\lambda} = 0\right\}.
		\end{equation*}
	\end{definition}

	\begin{assumption} \label{assumption:Hamiltonian_vector_field} 
		The set
		$E$ is closed and convex. The map $\Lambda$ is such that $\partial_p \Lambda(x,p,\theta) \subseteq T_E(x)$ for all $x \in E$, $p \in \bR^d$ and $\theta \in \Theta$.
	\end{assumption}
	
	In Lemma \ref{lemma:Hamiltonian_vector_flow_from_Lambda} we will show that the assumption implies that $\partial_p \cH(x,p) \subseteq T_E(x)$, which in turn implies that the solutions of the differential inclusion in terms of $\partial_p \cH(x,p)$ remain inside $E$. 
	Motivated by our examples, we work with closed convex domains~$E$. While in this context we can apply results from e.g. Deimling~\cite{De92}, we believe that similar results can be obtained in different contexts.
	
	\begin{remark} \label{remark:vector_field_inside_relation_comparison}
		The statement that $\partial_p \cH(x,p) \subseteq T_E(x)$ is intuitively implied by the comparison principle for $\bfH$ and therefore, we expect it to hold in any setting for which Theorem \ref{theorem:comparison_principle_variational} holds. Here, we argue in a simple case why this is to be expected. First of all, note that the comparison principle for $\bfH$ builds upon the maximum principle. 
		Suppose that $E = [0,1]$, $f,g \in C^1_b(E)$ and suppose that $f(0) - g(0) = \sup_x f(x) - g(x)$. As $x=0$ is a boundary point, we conclude that $f'(0) \leq g'(0)$. If indeed the maximum principle holds, we must have
		\begin{equation*}
			\cH(0,f'(0)) = Hf(0) \leq Hg(0) = \cH(0,g'(0))
		\end{equation*}
		implying that $p \mapsto \cH(0,p)$ is increasing, in other words 
		\begin{equation*}
			\partial_p \cH(x,p)) \subseteq [0,\infty) = T_{[0,1]}(0).
		\end{equation*}
	\end{remark}
	\section{The comparison principle} \label{section:comparison_principle}
	In this section, we establish Theorem \ref{theorem:comparison_principle_variational}. To establish the comparison principle for $f - \lambda \bfH f = h$ we use the bootstrap method explained in the introduction. We start by a classical localization argument. 
	
	\smallskip
	
	We carry out the localization argument by absorbing the containment function $\Upsilon$ from Assumption \ref{assumption:results:regularity_of_V} \ref{item:assumption:slow_regularity:compact_containment} into the test functions. This leads to two new operators, $H_\dagger$ and $H_\ddagger$ that serve as an upper bound and a lower bound for the true $\bfH$. We will then show the comparison principle for the Hamilton-Jacobi equation in terms of these two new operators.
	We therefore have to extend our notion of Hamilton-Jacobi equations and the comparison principle. This extension of the definition is standard, but we included it for completeness in the appendix as Definition \ref{definition:appendix_pair_ofHJ}.
	
	\smallskip
	
	This procedure allows us to clearly separate the reduction to compact sets on one hand, and the proof of the comparison principle on the basis of the bootstrap procedure on the other. Schematically, we will establish the following diagram:
	\begin{center}
		\begin{tikzpicture}
			\matrix (m) [matrix of math nodes,row sep=1em,column sep=4em,minimum width=2em]
			{
				{ } &[7mm] H_\dagger \\
				\bfH & { } \\
				{ }  & H_\ddagger \\};
			\path[-stealth]
			(m-2-1) edge node [above] {sub \qquad { }} (m-1-2)
			(m-2-1) edge node [below] {super \qquad { }} (m-3-2);
			
			\begin{pgfonlayer}{background}
				\node at (m-2-2) [rectangle,draw=blue!50,fill=blue!20,rounded corners, minimum width=1cm, minimum height=2.5cm]  {comparison};
			\end{pgfonlayer}
		\end{tikzpicture}
	\end{center}
	In this diagram, an arrow connecting an operator $A$ with operator $B$ with subscript 'sub' means that viscosity subsolutions of $f - \lambda A f = h$ are also viscosity subsolutions of $f - \lambda B f = h$. Similarly for arrows with a subscript 'super'.
	\smallskip
	
	We introduce the operators $H_\dagger$ and $H_\ddagger$ in Section~\ref{subsection:definition_of_Hamiltonians}. The arrows will be established in Section \ref{subsection:implications_from_compact_containment}. Finally, we will establish the comparison principle for $H_\dagger$ and $H_\ddagger$ in Section~\ref{subsection:proof_of_comparison_principle}. Combined these two results imply the comparison principle for $\bfH$.
	\begin{proof}[Proof of Theorem~\ref{theorem:comparison_principle_variational}]
		We start with the proof of (a). Let $f \in \cD(\bfH)$. Then $\bfH f$ is continuous since by Proposition~\ref{prop:reg-of-H-and-L:continuity} in Appendix \ref{section:regularity-of-H-and-L}, the Hamiltonian~$\mathcal{H}$ is continuous.
		
		\smallskip

		We proceed with the proof of (b). Fix $h_1,h_2 \in C_b(E)$ and $\lambda > 0$.
		
		\smallskip 
		
		Let $u_1,u_2$ be a viscosity sub- and supersolution to $f - \lambda \bfH f = h_1$ and  $f - \lambda \bfH f = h_2$ respectively. By Lemma \ref{lemma:viscosity_solutions_compactify2} proven in Section~\ref{subsection:implications_from_compact_containment}, $u_1$ and $u_2$ are a sub- and supersolution to $f - \lambda H_\dagger f = h_1$ and $f - \lambda H_\ddagger f = h_2$ respectively. Thus $\sup_E u_1 - u_2 \leq \sup_E h_1 - h_2$ by Proposition~\ref{prop:CP} of Section~\ref{subsection:proof_of_comparison_principle}. Specialising to $h_1=h_2$ gives Theorem~\ref{theorem:comparison_principle_variational}.
	\end{proof}
	\subsection{Definition of auxiliary operators} \label{subsection:definition_of_Hamiltonians}
	In this section, we repeat the definition of $\bfH$, and introduce the operators $H_\dagger$ and $H_\ddagger$.
	\begin{definition} \label{definition_effectiveH}
		The operator $\bfH \subseteq C_b^1(E) \times C_b(E)$ has domain $\cD(\bfH) = C_{cc}^\infty(E)$ and satisfies $\bfH f(x) = \cH(x, \nabla f(x))$, where $\cH$ is the map
		\begin{equation*}
			\mathcal{H}(x,p) = \sup_{\theta \in \Theta}\left[\Lambda(x,p,\theta) - \mathcal{I}(x,\theta)\right].
		\end{equation*}
	\end{definition}
	We proceed by introducing $H_\dagger$ and $H_\ddagger$. Recall  Assumption~\ref{item:assumption:slow_regularity:compact_containment} and the constant $C_\Upsilon := \sup_{\theta}\sup_x \Lambda(x,\nabla \Upsilon(x),\theta)$ therein.
	Denote by $C_\ell^\infty(E)$ the set of smooth functions on $E$ that have a lower bound and by $C_u^\infty(E)$ the set of smooth functions on $E$ that have an upper bound.
	\begin{definition}[The operators $H_\dagger$ and $H_\ddagger$] \label{definiton:HdaggerHddagger}
		For $f \in C_\ell^\infty(E)$ and $\varepsilon \in (0,1)$  set 
		\begin{gather*}
			f^\varepsilon_\dagger := (1-\varepsilon) f + \varepsilon \Upsilon \\
			H_{\dagger,f}^\varepsilon(x) := (1-\varepsilon) \cH(x,\nabla f(x)) + \varepsilon C_\Upsilon.
		\end{gather*}
		and set
		\begin{equation*}
			H_\dagger := \left\{(f^\varepsilon_\dagger,H_{\dagger,f}^\varepsilon) \, \middle| \, f \in C_\ell^\infty(E), \varepsilon \in (0,1) \right\}.
		\end{equation*} 
		For $f \in C_u^\infty(E)$ and $\varepsilon \in (0,1)$  set 
		\begin{gather*}
			f^\varepsilon_\ddagger := (1+\varepsilon) f - \varepsilon \Upsilon \\
			H_{\ddagger,f}^\varepsilon(x) := (1+\varepsilon) \cH(x,\nabla f(x)) - \varepsilon C_\Upsilon.
		\end{gather*}
		and set
		\begin{equation*}
			H_\ddagger := \left\{(f^\varepsilon_\ddagger,H_{\ddagger,f}^\varepsilon) \, \middle| \, f \in C_u^\infty(E), \varepsilon \in (0,1) \right\}.
		\end{equation*} 
	\end{definition}
	
	\subsection{Preliminary results} \label{subsection:implications_from_compact_containment}

	The operator $\bfH$ is related to $H_\dagger, H_\ddagger$ by the following Lemma.
	\begin{lemma}\label{lemma:viscosity_solutions_compactify2}
		Fix $\lambda > 0$ and $h \in C_b(E)$. 
		\begin{enumerate}[(a)]
			\item Every subsolution to $f - \lambda \bfH f = h$ is also a subsolution to $f - \lambda H_\dagger f = h$.
			\item Every supersolution to $f - \lambda \bfH f = h$ is also a supersolution to~$f-\lambda H_\ddagger f=~h$.
		\end{enumerate}
	\end{lemma}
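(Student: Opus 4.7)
The plan is to establish (a) in detail; part (b) follows by a symmetric dual argument. Fix a subsolution $u$ of $f-\lambda\bfH f=h$ and a test pair $(f_\dagger^\varepsilon, H_{\dagger,f}^\varepsilon)\in H_\dagger$ arising from $f\in C_\ell^\infty(E)$ and $\varepsilon\in(0,1)$; the target is to produce a sequence $(y_n)\subset E$ with $u(y_n)-f_\dagger^\varepsilon(y_n)\to\sup_E(u-f_\dagger^\varepsilon)$ and $\limsup_n[u(y_n)-\lambda H_{\dagger,f}^\varepsilon(y_n)-h(y_n)]\leq 0$. The role of the containment function $\Upsilon$ is to localize: since $u$ is bounded above and $f$ is bounded below, the pointwise bound $f_\dagger^\varepsilon-u\geq\varepsilon\Upsilon+\mathrm{const}$ combined with compactness of the sublevel sets of $\Upsilon$ (Definition~\ref{def:results:compact-containment}) and upper semicontinuity of $u-f_\dagger^\varepsilon$ yields an actual maximizer $\hat x\in E$ of $u-f_\dagger^\varepsilon$.

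Next I would construct a test function $g\in\cD(\bfH)=C_{cc}^\infty(E)$ satisfying three properties: $g$ equals $f_\dagger^\varepsilon$ on an open neighbourhood $U$ of $\hat x$; $g\geq f_\dagger^\varepsilon$ on all of $E$ with a strict uniform gap outside $U$; and $g$ equals a sufficiently large constant outside a bigger $\Upsilon$-sublevel set. This is accomplished by smoothly interpolating $f_\dagger^\varepsilon$ upward on an annular region $\{c_1\leq\Upsilon\leq c_2\}$ for well-chosen constants, and flattening $g$ to a constant $C_\star>\sup u+\sup_{\{\Upsilon\leq c_2\}}f_\dagger^\varepsilon$ beyond that. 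Consequently $\sup_E(u-g)=\sup_E(u-f_\dagger^\varepsilon)$ is attained at $\hat x$, and any almost-optimizing sequence for $u-g$ lies eventually in $U$. Applying the subsolution property of $u$ to $g$ produces a sequence $(y_n)$ eventually in $U$ with $u(y_n)-g(y_n)\to\sup_E(u-g)$ and $\limsup_n[u(y_n)-\lambda\cH(y_n,\nabla g(y_n))-h(y_n)]\leq 0$. On $U$ one has $\nabla g=\nabla f_\dagger^\varepsilon=(1-\varepsilon)\nabla f+\varepsilon\nabla\Upsilon$, so by convexity of $p\mapsto\Lambda(x,p,\theta)$ (Assumption \ref{assumption:results:regularity_of_V} \ref{item:assumption:slow_regularity:convexity}), nonnegativity of $\cI$, and the containment bound $\sup_\theta\Lambda(x,\nabla\Upsilon(x),\theta)\leq C_\Upsilon$,
\begin{equation*}
\cH(y_n,\nabla g(y_n))\leq(1-\varepsilon)\cH(y_n,\nabla f(y_n))+\varepsilon C_\Upsilon=H_{\dagger,f}^\varepsilon(y_n),
\end{equation*}
which combines with the previous inequality to yield $\limsup_n[u(y_n)-\lambda H_{\dagger,f}^\varepsilon(y_n)-h(y_n)]\leq 0$, establishing (a).

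Part (b) is treated dually. The key algebraic identity is $\nabla f=\tfrac{1}{1+\varepsilon}\nabla f_\ddagger^\varepsilon+\tfrac{\varepsilon}{1+\varepsilon}\nabla\Upsilon$, a genuine convex combination, so that convexity of $\cH(x,\cdot)$ together with $\cH(x,\nabla\Upsilon(x))\leq C_\Upsilon$ yields $H_{\ddagger,f}^\varepsilon(x)\leq\cH(x,\nabla f_\ddagger^\varepsilon(x))$. The localization works via $v-f_\ddagger^\varepsilon=v-(1+\varepsilon)f+\varepsilon\Upsilon$, whose infimum is attained because $v$ is bounded below and $f\in C_u^\infty(E)$ is bounded above, and the cut-off is built symmetrically from below. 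The main technical obstacle throughout is the smoothness of the cut-off $g$: since $\Upsilon$ is only assumed $C^1$, producing a genuinely $C^\infty$ test function requires either a local mollification of $\Upsilon$ near $\hat x$ that preserves its value and gradient at the optimizer, or invoking the remark following Theorem~\ref{theorem:comparison_principle_variational}, which permits $\cD(\bfH)$ to be enlarged up to $C_b^1(E)$ so that a $C^1$ cut-off suffices.
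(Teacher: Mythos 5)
Your proposal is correct and follows essentially the same route as the paper: localize via the compact sublevel sets of $\Upsilon$, replace $f_\dagger^\varepsilon$ by a test function in $C_{cc}^\infty(E)$ that agrees with it on the relevant compact set, invoke the subsolution property for $\bfH$, and finish with convexity of $p\mapsto\cH(x,p)$ and the containment bound $\cH(x,\nabla\Upsilon(x))\leq C_\Upsilon$ (the paper builds the cutoff by composing $(1-\varepsilon)f+\varepsilon\Upsilon$ with a smooth truncation $\gamma$ of its range rather than by your annular interpolation, but this is a cosmetic difference). Your closing remark about $\Upsilon$ being only $C^1$ is a fair observation of a point the paper's proof passes over silently, and your proposed fixes are appropriate.
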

	We only prove (a) of Lemma~\ref{lemma:viscosity_solutions_compactify2}, as (b) can be carried out analogously.

	\begin{proof}
		Fix $\lambda > 0$ and $h \in C_b(E)$. Let $u$ be a subsolution to $f - \lambda \mathbf{H}f = h$. We prove it is also a subsolution to $f - \lambda H_\dagger f = h$.
		\smallskip
		
		Fix $\varepsilon > 0 $ and $f\in C_\ell^\infty(E)$ and let $(f^\varepsilon_\dagger,H^\varepsilon_{\dagger,f}) \in H_\dagger$ as in Definition \ref{definiton:HdaggerHddagger}. We will prove that there are $x_n\in E$ such that
		\begin{gather}
			\lim_{n\to\infty}\left(u-f_\dagger^\varepsilon\right)(x_n) = \sup_{x\in E}\left(u(x)-f_\dagger^\varepsilon(x) \right),\label{eqn:proof_lemma_conditions_for_subsolution_first}\\
			\limsup_{n\to\infty} \left[u(x_n)-\lambda H_{\dagger,f}^\varepsilon(x_n) - h(x_n)\right]\leq 0.\label{eqn:proof_lemma_conditions_for_subsolution_second}
		\end{gather}
		As the function $\left[u -(1-\varepsilon)f\right]$ is bounded from above and $\varepsilon \Upsilon$ has compact sublevel-sets, the sequence $x_n$ along which the first limit is attained can be assumed to lie in the compact set 
		\begin{equation*}
			K := \left\{x \, | \, \Upsilon(x) \leq \varepsilon^{-1} \sup_x \left(u(x) - (1-\varepsilon)f(x) \right)\right\}.
		\end{equation*}
		Set $M = \varepsilon^{-1} \sup_x \left(u(x) - (1-\varepsilon)f(x) \right)$. Let $\gamma : \bR \rightarrow \bR$ be a smooth increasing function such that
		\begin{equation*}
			\gamma(r) = \begin{cases}
				r & \text{if } r \leq M, \\
				M + 1 & \text{if } r \geq M+2.
			\end{cases}
		\end{equation*}
		Denote by $f_\varepsilon$ the function on $E$ defined by 
		\begin{equation*}
			f_\varepsilon(x) := \gamma\left((1-\varepsilon)f(x) + \varepsilon \Upsilon(x) \right).
		\end{equation*}
		By construction $f_\varepsilon$ is smooth and constant outside of a compact set and thus lies in $\cD(H) = C_{cc}^\infty(E)$. As $u$ is a viscosity subsolution for $f - \lambda Hf = h$ there exists a sequence $x_n \in K \subseteq E$ (by our choice of $K$) with
		\begin{gather}
			\lim_n \left(u-f_\varepsilon\right)(x_n) = \sup_x \left(u(x)-f_\varepsilon(x)\right), \label{eqn:visc_subsol_sup} \\
			\limsup_n \left[u(x_n) - \lambda \mathbf{H} f_\varepsilon(x_n) - h(x_n)\right] \leq 0. \label{eqn:visc_subsol_upperbound}
		\end{gather}
		As $f_\varepsilon$ equals $f_\dagger^\varepsilon$ on $K$, we have from \eqref{eqn:visc_subsol_sup} that also
		\begin{equation*}
			\lim_n \left(u-f_\dagger^\varepsilon\right)(x_n) = \sup_{x\in E}\left(u(x)-f_\dagger^\varepsilon(x)\right),
		\end{equation*}
		establishing~\eqref{eqn:proof_lemma_conditions_for_subsolution_first}. Convexity of $p \mapsto \mathcal{H}(x,p)$ yields for arbitrary points $x\in K$ the estimate
		\begin{align*}
			\mathbf{H} f_\varepsilon(x) &= \mathcal{H}(x,\nabla f_\varepsilon(x)) \\
			& \leq (1-\varepsilon) \mathcal{H}(x,\nabla f(x)) + \varepsilon \mathcal{H}(x,\nabla \Upsilon(x)) \\
			&\leq (1-\varepsilon) \mathcal{H}(x,\nabla f(x)) + \varepsilon C_\Upsilon = H^\varepsilon_{\dagger,f}(x).
		\end{align*} 
		Combining this inequality with \eqref{eqn:visc_subsol_upperbound} yields
		\begin{multline*}
			\limsup_n \left[u(x_n) - \lambda H^\varepsilon_{\dagger,f}(x_n) - h(x_n)\right] \\
			\leq \limsup_n \left[u(x_n) - \lambda \mathbf{H} f_\varepsilon(x_n) - h(x_n)\right] \leq 0,
		\end{multline*}
		establishing \eqref{eqn:proof_lemma_conditions_for_subsolution_second}. This concludes the proof.
	\end{proof}
	\subsection{The comparison principle} \label{subsection:proof_of_comparison_principle}
	In this section, we prove the comparison principle for the operators $H_\dagger$ and $H_\ddagger$.
	\begin{proposition}\label{prop:CP} 
		Fix $\lambda > 0$ and $h_1,h_2 \in C_b(E)$. 	Let $u_1$ be a viscosity subsolution to $f - \lambda H_\dagger f = h_1$ and let $u_2$ be a viscosity supersolution to $f - \lambda H_\ddagger f = h_2$. Then we have $\sup_x u_1(x) - u_2(x) \leq \sup_x h_1(x) - h_2(x)$.
	\end{proposition}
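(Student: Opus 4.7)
The plan is to implement the classical doubling-of-variables technique and then apply the bootstrap procedure sketched in the introduction. Fix $\lambda>0$ and $h_1,h_2\in C_b(E)$, subsolution $u_1$ to $f-\lambda H_\dagger f=h_1$ and supersolution $u_2$ to $f-\lambda H_\ddagger f=h_2$. Assume toward contradiction that $\delta:=\sup_{E}(u_1-u_2)-\sup_E(h_1-h_2)>0$. Fix $\varepsilon\in(0,1)$ small enough that $2\varepsilon\|\Upsilon\|_{\text{useful}}$ perturbations do not spoil the strict inequality, and consider the penalised functional
\begin{equation*}
\Phi_\alpha(x,y) := u_1(x) - u_2(y) - \alpha\Psi(x,y) - \varepsilon \Upsilon(x) - \varepsilon \Upsilon(y),
\end{equation*}
where $\Psi$ is a penalisation function as in Definition~\ref{def:results:good_penalization_function} (e.g.\ $\Psi(x,y)=|x-y|^2$) and $\Upsilon$ is the containment function from Assumption~\ref{item:assumption:slow_regularity:compact_containment}. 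Since $\Upsilon$ has compact sublevel sets and $u_1-u_2$ is bounded, $\Phi_\alpha$ attains its supremum at some $(x_\alpha,y_\alpha)\in K_\varepsilon\times K_\varepsilon$ for a compact $K_\varepsilon\subseteq E$. The standard doubling lemma (invoked here as Lemma~\ref{lemma:doubling_lemma} in the paper) shows $\alpha\Psi(x_\alpha,y_\alpha)\to 0$ and any cluster point $(x_\varepsilon,y_\varepsilon)$ of $(x_\alpha,y_\alpha)$ satisfies $x_\varepsilon=y_\varepsilon$. Testing $u_1$ with $f_1(x):=u_2(y_\alpha)+\alpha\Psi(x,y_\alpha)+\varepsilon\Upsilon(x)$ (which belongs to $C_\ell^\infty(E)$) gives, writing $p_\alpha:=\alpha(\nabla\Psi(\cdot,y_\alpha))(x_\alpha)$ and $q_\alpha:=-\alpha(\nabla\Psi(x_\alpha,\cdot))(y_\alpha)$,
\begin{equation*}
u_1(x_\alpha)-\lambda\bigl[(1-\varepsilon)\cH(x_\alpha,p_\alpha)+\varepsilon C_\Upsilon\bigr]\le h_1(x_\alpha),
\end{equation*}
and symmetrically, testing $u_2$ with a function in $C_u^\infty(E)$ built from $u_1(x_\alpha)$,
\begin{equation*}
u_2(y_\alpha)-\lambda\bigl[(1+\varepsilon)\cH(y_\alpha,q_\alpha)-\varepsilon C_\Upsilon\bigr]\ge h_2(y_\alpha).
\end{equation*}
Subtracting and rearranging produces the key inequality
\begin{equation*}
u_1(x_\alpha)-u_2(y_\alpha)\le (h_1-h_2)(x_\alpha)+o(1)+\lambda\bigl[(1-\varepsilon)\cH(x_\alpha,p_\alpha)-(1+\varepsilon)\cH(y_\alpha,q_\alpha)\bigr]+2\lambda\varepsilon C_\Upsilon,
\end{equation*}
so that obtaining the desired contradiction reduces to showing
\begin{equation*}
\liminf_{\alpha\to\infty}\bigl[\cH(x_\alpha,p_\alpha)-\cH(y_\alpha,q_\alpha)\bigr]\le 0,
\end{equation*}
up to a term that vanishes as $\varepsilon\downarrow0$.

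The bootstrap argument, which is the heart of the proof, will deliver this. Select $\theta_\alpha\in\Theta$ with $\cH(x_\alpha,p_\alpha)\le \Lambda(x_\alpha,p_\alpha,\theta_\alpha)-\cI(x_\alpha,\theta_\alpha)+\alpha^{-1}$ and, using Assumption~\ref{item:assumption:I:zero-measure}, pick $\theta_\alpha^0$ with $\cI(y_\alpha,\theta_\alpha^0)=0$. Since $\cH(y_\alpha,q_\alpha)\ge \Lambda(y_\alpha,q_\alpha,\theta_\alpha^0)$ and $\cH(y_\alpha,q_\alpha)$ is bounded above uniformly in $\alpha$ by the supersolution inequality, we obtain $\sup_\alpha\Lambda(y_\alpha,q_\alpha,\theta_\alpha^0)<\infty$. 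Now I perform the four bootstrap steps:
\begin{enumerate}[(1)]
\item The controls $\theta_\alpha^0$ lie in the compact set $\Theta_{K_\varepsilon,0}$ (Assumption~\ref{item:assumption:I:compact-sublevelsets}), so the continuity estimate \ref{item:assumption:slow_regularity:continuity_estimate} applied to $\Lambda$ with the fundamental collection $(x_\alpha,y_\alpha,\theta_\alpha^0)$ transfers the bound from $\Lambda(y_\alpha,q_\alpha,\theta_\alpha^0)$ to $\Lambda(x_\alpha,p_\alpha,\theta_\alpha^0)$: $\sup_\alpha\Lambda(x_\alpha,p_\alpha,\theta_\alpha^0)<\infty$.
\item Controlled growth \ref{item:assumption:slow_regularity:controlled_growth} on the compact set $K_\varepsilon$ upgrades this to $\sup_\alpha\Lambda(x_\alpha,p_\alpha,\theta_\alpha)<\infty$.
\item The subsolution estimate gives $\cI(x_\alpha,\theta_\alpha)\le \Lambda(x_\alpha,p_\alpha,\theta_\alpha)-\cH(x_\alpha,p_\alpha)+\alpha^{-1}$, and the lower bound $\cH(x_\alpha,p_\alpha)\ge -\|u_1-h_1\|_\infty/\lambda$ yields $\sup_\alpha\cI(x_\alpha,\theta_\alpha)<\infty$. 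By Assumption~\ref{item:assumption:I:finiteness} (applied at cluster points, using a finite cover of $K_\varepsilon$) we also get $\sup_\alpha\cI(y_\alpha,\theta_\alpha)<\infty$.
\item Consequently $\theta_\alpha\in\Theta_{K_\varepsilon,M}$ for some $M$, a compact subset of $\Theta$ by Assumption~\ref{item:assumption:I:compact-sublevelsets}.
\end{enumerate}

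With $\theta_\alpha$ confined to a compact set, the pair $(x_\alpha,y_\alpha,\theta_\alpha)$ is a fundamental collection for $\Lambda$: conditions \ref{item:def:continuity_estimate:1}--\ref{item:def:continuity_estimate:2} come from the doubling lemma and step~(4), while \ref{item:def:continuity_estimate:3} follows from the bound on $\Lambda(y_\alpha,q_\alpha,\theta_\alpha^0)$ together with \ref{item:assumption:slow_regularity:controlled_growth}. Hence the continuity estimate \ref{item:assumption:slow_regularity:continuity_estimate} gives
\begin{equation*}
\liminf_{\alpha\to\infty}\bigl[\Lambda(x_\alpha,p_\alpha,\theta_\alpha)-\Lambda(y_\alpha,q_\alpha,\theta_\alpha)\bigr]\le 0.
\end{equation*}
Since $\cH(y_\alpha,q_\alpha)\ge \Lambda(y_\alpha,q_\alpha,\theta_\alpha)-\cI(y_\alpha,\theta_\alpha)$ while $\cH(x_\alpha,p_\alpha)\le\Lambda(x_\alpha,p_\alpha,\theta_\alpha)-\cI(x_\alpha,\theta_\alpha)+\alpha^{-1}$, the equi-continuity estimate \ref{item:assumption:I:equi-cont} on the compact sublevel set containing $\{\theta_\alpha\}$ (combined with $\Psi(x_\alpha,y_\alpha)\to 0$, so that $d(x_\alpha,y_\alpha)\to 0$) shows $|\cI(x_\alpha,\theta_\alpha)-\cI(y_\alpha,\theta_\alpha)|\to 0$, and therefore $\liminf_\alpha[\cH(x_\alpha,p_\alpha)-\cH(y_\alpha,q_\alpha)]\le 0$. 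Plugging this back into the subtracted sub/super inequality and letting $\varepsilon\downarrow 0$ yields $\sup_E(u_1-u_2)\le \sup_E(h_1-h_2)$, contradicting the assumption $\delta>0$.

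The main obstacle I anticipate is the careful verification that the collection $(x_\alpha,y_\alpha,\theta_\alpha)$ actually qualifies as a fundamental collection in the sense of Definition~\ref{def:results:continuity_estimate}; the circular-looking dependence between bounding the Hamiltonian and placing $\theta_\alpha$ in a compact set is exactly what the bootstrap resolves, and the bookkeeping of the $\varepsilon$-perturbations coming from $H_\dagger,H_\ddagger$ (the factors $(1\pm\varepsilon)$ and the constant $\varepsilon C_\Upsilon$) must be organised so that sending $\alpha\to\infty$ first and then $\varepsilon\downarrow 0$ produces the clean bound. Localisation to the compact set $K_\varepsilon$ is essential for invoking Assumption~\ref{item:assumption:I:finiteness} uniformly, which may require a finite cover argument or passing to a further subsequence with a common limit.
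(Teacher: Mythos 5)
Your proposal is correct and follows essentially the same route as the paper's proof: doubling of variables with the containment function absorbed into the test functions, reduction to $\liminf_\alpha[\cH(x_\alpha,p_\alpha)-\cH(y_\alpha,q_\alpha)]\le 0$, and then the bootstrap (zero-cost controls $\theta^0_\alpha$ plus the continuity estimate to bound $\Lambda(x_\alpha,p_\alpha,\theta^0_\alpha)$, controlled growth to bound $\Lambda(x_\alpha,p_\alpha,\theta_\alpha)$, the subsolution inequality to bound $\cI(x_\alpha,\theta_\alpha)$, compact sublevel sets to confine $\theta_\alpha$, and finally the continuity and equicontinuity estimates). The only caveats are bookkeeping ones you already flag yourself: the doubling functional should carry the $(1\pm\varepsilon)^{-1}$ weights on $u_1,u_2$ so the test functions and momenta match $\cD(H_\dagger),\cD(H_\ddagger)$ exactly, and since the continuity estimate only controls a $\liminf$, the uniform bound $\sup_\alpha\Lambda(x_\alpha,p_\alpha,\theta^0_\alpha)<\infty$ holds only after passing to a subsequence, which suffices for the final $\liminf$.
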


	The proof uses a variant of a classical estimate that was proven e.g. in \cite[Proposition 3.7]{CIL92} or in the present form in Proposition~A.11 of \cite{CoKr17}. 
	
	\begin{lemma}\label{lemma:doubling_lemma}
		Let $u$ be bounded and upper semi-continuous, let $v$ be bounded and lower semi-continuous, let $\Psi : E^2 \rightarrow \bR^+$ be penalization functions and let $\Upsilon$ be a containment function.
		\smallskip
		
		Fix $\varepsilon > 0$. For every $\alpha >0$ there exist $x_{\alpha,\varepsilon},y_{\alpha,\varepsilon} \in E$ such that
		\begin{multline} \label{eqn:existence_optimizers}
			\frac{u(x_{\alpha,\varepsilon})}{1-\varepsilon} - \frac{v(y_{\alpha,\varepsilon})}{1+\varepsilon} - \alpha \Psi(x_{\alpha,\varepsilon},y_{\alpha,\varepsilon}) - \frac{\varepsilon}{1-\varepsilon}\Upsilon(x_{\alpha,\varepsilon}) -\frac{\varepsilon}{1+\varepsilon}\Upsilon(y_{\alpha,\varepsilon}) \\
			= \sup_{x,y \in E} \left\{\frac{u(x)}{1-\varepsilon} - \frac{v(y)}{1+\varepsilon} -  \alpha \Psi(x,y)  - \frac{\varepsilon}{1-\varepsilon}\Upsilon(x) - \frac{\varepsilon}{1+\varepsilon}\Upsilon(y)\right\}.
		\end{multline}
		Additionally, for every $\varepsilon > 0$ we have that
		\begin{enumerate}[(a)]
			\item The set $\{x_{\alpha,\varepsilon}, y_{\alpha,\varepsilon} \, | \,  \alpha > 0\}$ is relatively compact in $E$.
			\item All limit points of $\{(x_{\alpha,\varepsilon},y_{\alpha,\varepsilon})\}_{\alpha > 0}$ as $\alpha \rightarrow \infty$ are of the form $(z,z)$ and for these limit points we have $u(z) - v(z) = \sup_{x \in E} \left\{u(x) - v(x) \right\}$.
			\item We have 
			\[
			\lim_{\alpha \rightarrow \infty}  \alpha \Psi(x_{\alpha,\varepsilon},y_{\alpha,\varepsilon}) = 0.
			\]
		\end{enumerate}
	\end{lemma}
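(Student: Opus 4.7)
My plan is to rewrite the joint optimization in a single penalization-free form and then establish the three claims in order. Set
\[
G(x,y) := \frac{u(x)}{1-\varepsilon} - \frac{v(y)}{1+\varepsilon} - \frac{\varepsilon}{1-\varepsilon}\Upsilon(x) - \frac{\varepsilon}{1+\varepsilon}\Upsilon(y), \qquad F_\alpha := G - \alpha\Psi,
\]
so that \eqref{eqn:existence_optimizers} asks for a maximizer of $F_\alpha$ on $E^2$. Two regularity facts will be used throughout: $F_\alpha$ is upper semi-continuous on $E^2$ (since $u$ is usc, $v$ is lsc, and $\Upsilon,\Psi$ are continuous), and $G$ is bounded above (since $u,v$ are bounded and $\Upsilon\geq 0$).

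For existence and part (a), I would fix an arbitrary reference point $z_0\in E$ and use $F_\alpha(x,y)\geq F_\alpha(z_0,z_0)=G(z_0,z_0)$ at any near-maximizer (valid because $\Psi(z_0,z_0)=0$) to extract an $\alpha$-independent bound on $\Upsilon(x)+\Upsilon(y)$. The compact sublevel set property of $\Upsilon$ then localizes the search to a fixed compact product $K_\varepsilon\times K_\varepsilon\subseteq E^2$, where upper semi-continuity of $F_\alpha$ produces a maximizer $(x_{\alpha,\varepsilon},y_{\alpha,\varepsilon})$; the same bound delivers (a) uniformly in $\alpha$.

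For (b) and (c), I would track $L_\alpha := F_\alpha(x_{\alpha,\varepsilon},y_{\alpha,\varepsilon})$. Since $\alpha\mapsto F_\alpha(x,y)$ is non-increasing, so is $L_\alpha$; and since $F_\alpha(z,z)=G(z,z)$, one has $L_\alpha \geq M^* := \sup_{z\in E} G(z,z)$, hence $L_\alpha\downarrow L_\infty\geq M^*$. The identity $\alpha\Psi(x_{\alpha,\varepsilon},y_{\alpha,\varepsilon}) = G(x_{\alpha,\varepsilon},y_{\alpha,\varepsilon}) - L_\alpha$ combined with the upper bound on $G$ forces $\Psi(x_{\alpha,\varepsilon},y_{\alpha,\varepsilon})\to 0$; continuity of $\Psi$ and $\Psi^{-1}(0)=\{x=y\}$ then pin every limit point of $(x_{\alpha,\varepsilon},y_{\alpha,\varepsilon})$ to the diagonal, giving the diagonal part of (b). Along any convergent subsequence with limit $(z^*,z^*)$, upper semi-continuity of $G$ yields $\limsup_k G(x_{\alpha_k,\varepsilon},y_{\alpha_k,\varepsilon})\leq G(z^*,z^*)\leq M^*\leq L_\infty$, while $L_{\alpha_k}\leq G(x_{\alpha_k,\varepsilon},y_{\alpha_k,\varepsilon})$ and $L_{\alpha_k}\to L_\infty$; the squeeze collapses everything to $L_\infty$, so $\alpha_k\Psi\to 0$. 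A standard subsequence-of-subsequences argument upgrades this to (c) and identifies $z^*$ as a maximizer of $z\mapsto G(z,z)$, which is the maximization statement in (b).

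The main obstacle I anticipate is the penalty-vanishing step: a priori $L_\infty\geq M^*$ could be strict, and the upper semi-continuity of $G$ at the diagonal limit point is exactly what closes this gap and forces $\alpha_k\Psi\to 0$. The role of $\Upsilon$ is purely to make the sup-attainment uniform in $\alpha$, while $\Psi$ supplies the doubling mechanism that drives optimizers to the diagonal. The remaining steps are routine once the right reference point $z_0$ is chosen and the semi-continuity bookkeeping of $u,v,\Upsilon,\Psi$ is in place.
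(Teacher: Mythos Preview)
Your argument is correct and follows the classical route the paper defers to (it cites \cite[Proposition~3.7]{CIL92} and \cite[Proposition~A.11]{CoKr17} rather than giving its own proof): localize via the containment function, use monotonicity of $L_\alpha$ and the diagonal lower bound $M^*$, then close with upper semi-continuity of $G$ along convergent subsequences. The subsequence-of-subsequences upgrade to the full limit in (c) is the standard finishing move.

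One discrepancy worth flagging: what your squeeze argument actually yields is that any diagonal limit point $z^*$ maximizes $z\mapsto G(z,z)=\tfrac{u(z)}{1-\varepsilon}-\tfrac{v(z)}{1+\varepsilon}-\big(\tfrac{\varepsilon}{1-\varepsilon}+\tfrac{\varepsilon}{1+\varepsilon}\big)\Upsilon(z)$, not $z\mapsto u(z)-v(z)$ as written in (b). For fixed $\varepsilon>0$ these two optimizers need not coincide, so your sentence ``which is the maximization statement in (b)'' overstates things. This is almost certainly a minor imprecision in the lemma's phrasing rather than a flaw in your reasoning: the subsequent proof of Proposition~\ref{prop:CP} only uses the diagonal form of the limit points and the inequality $\sup_E(u_1-u_2)\leq \liminf_{\varepsilon}\liminf_{\alpha}\big[\tfrac{u_1(x_{\varepsilon,\alpha})}{1-\varepsilon}-\tfrac{u_2(y_{\varepsilon,\alpha})}{1+\varepsilon}\big]$, both of which follow from what you prove. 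Just be explicit that you establish the $G$-diagonal maximization and that the $u-v$ statement only emerges after sending $\varepsilon\to 0$.
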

	\begin{proof}[Proof of Proposition~\ref{prop:CP}]
		Fix $\lambda >0$ and $h_1,h_2 \in C_b(E)$. Let $u_1$ be a viscosity subsolution and $u_2$ be a viscosity supersolution of $f - \lambda H_\dagger f = h_1$ and  $f - \lambda H_\ddagger f = h_2$ respectively.  We prove Theorem~\ref{prop:CP} in five steps of which the first two are classical.  
		
		We sketch the steps, before giving full proofs.
		
		\smallskip
		
		\underline{\emph{Step 1}}:  We prove that for $\varepsilon > 0 $ and $\alpha > 0$, there exist points $x_{\varepsilon,\alpha},y_{\varepsilon,\alpha} \in E$ satisfying the properties listed in Lemma \ref{lemma:doubling_lemma} and momenta $p_{\varepsilon,\alpha}^1,p_{\varepsilon,\alpha}^2 \in \mathbb{R}^d$ such that
		\begin{equation*}
			p_{\varepsilon,\alpha}^1 = \alpha \nabla \Psi(\cdot,y_{\varepsilon,\alpha})(x_{\varepsilon,\alpha}), \qquad p_{\varepsilon,\alpha}^2 = - \alpha \nabla \Psi(x_{\varepsilon,\alpha},\cdot)(y_{\varepsilon,\alpha}),
		\end{equation*}
		and
		\begin{multline} \label{eqn:estimate_step_1}
			\sup_E(u_1-u_2) \leq \lambda \liminf_{\varepsilon\to 0}\liminf_{\alpha \to \infty} \left[\mathcal{H}(x_{\varepsilon,\alpha},p^1_{\varepsilon,\alpha}) - \mathcal{H}(y_{\varepsilon,\alpha},p^2_{\varepsilon,\alpha})\right] \\ + \sup_{E}(h_1 - h_2).
		\end{multline}
		This step is solely based on the sub- and supersolution properties of $u_1,u_2$, the continuous differentiability of the penalization function $\Psi(x,y)$, the containment function $\Upsilon$, and convexity of $p \mapsto \mathcal{H}(x,p)$. We conclude it suffices to establish for each $\varepsilon > 0$ that
		\begin{equation} \label{eqn:difference_hamiltonians_in_comparison}
			\liminf_{\alpha \rightarrow \infty} \mathcal{H}(x_{\varepsilon,\alpha},p^1_{\varepsilon,\alpha}) - \mathcal{H}(y_{\varepsilon,\alpha},p^2_{\varepsilon,\alpha}) \leq 0.
		\end{equation}

		\underline{\emph{Step 2}}: We will show that there are controls $\theta_{\varepsilon,\alpha}$ such that
		\begin{equation} \label{eqn:choice_control}
			\mathcal{H}(x_{\varepsilon,\alpha},p^1_{\varepsilon,\alpha}) = \Lambda(x_{\varepsilon,\alpha},p^1_{\varepsilon,\alpha},\theta_{\varepsilon,\alpha}) - \mathcal{I}(x_{\varepsilon,\alpha},\theta_{\varepsilon,\alpha}).
		\end{equation}
		As a consequence we have
		\begin{multline} \label{eqn:basic_decomposition_Hamiltonian_difference1}
			\mathcal{H}(x_{\varepsilon,\alpha},p^1_{\varepsilon,\alpha})-
			\mathcal{H}(y_{\varepsilon,\alpha},p^2_{\varepsilon,\alpha})
			\leq 
			\Lambda(x_{\varepsilon,\alpha},p^1_{\varepsilon,\alpha},\theta_{\varepsilon,\alpha})-
			\Lambda(y_{\varepsilon,\alpha},p^2_{\varepsilon,\alpha},\theta_{\varepsilon,\alpha})\\
			+\mathcal{I}(y_{\varepsilon,\alpha},\theta_{\varepsilon,\alpha})-
			\mathcal{I}(x_{\varepsilon,\alpha},\theta_{\varepsilon,\alpha}).
		\end{multline}
		For establishing \eqref{eqn:difference_hamiltonians_in_comparison}, it is sufficient to bound the differences in \eqref{eqn:basic_decomposition_Hamiltonian_difference1} by using Assumptions \ref{assumption:results:regularity_of_V} \ref{item:assumption:slow_regularity:continuity_estimate}  and  \ref{assumption:results:regularity_I} \ref{item:assumption:I:equi-cont}.
		
		\underline{\emph{Step 3}}: We verify the conditions to apply the continuity estimate, Assumption \ref{assumption:results:regularity_of_V} \ref{item:assumption:slow_regularity:continuity_estimate}.
		
		\smallskip
		
		The bootstrap argument allows us to find for each $\varepsilon$ a subsequence $\alpha = \alpha(\varepsilon) \rightarrow \infty$ such that the variables $(x_{\varepsilon,\alpha},x_{\varepsilon,\alpha},\theta_{\varepsilon,\alpha})$ are fundamental for $\Lambda$ with respect to $\Psi$ (See Definition \ref{def:results:continuity_estimate}).

		\underline{\emph{Step 4}}: We verify the conditions to apply the estimate on $\cI$, Assumption \ref{assumption:results:regularity_I} \ref{item:assumption:I:equi-cont}.

		\smallskip
		
		\underline{\emph{Step 5}}: Using the outcomes of Steps 3 and 4, we can apply the continuity estimate of Assumption \ref{assumption:results:regularity_of_V} \ref{item:assumption:slow_regularity:controlled_growth} and the equi-continuity of Assumption \ref{assumption:results:regularity_I} \ref{item:assumption:I:equi-cont} to estimate \eqref{eqn:basic_decomposition_Hamiltonian_difference1} for any $\varepsilon$:
		\begin{multline} \label{eqn:basic_decomposition_Hamitlonian_difference2}
			\liminf_{\alpha \rightarrow \infty} \mathcal{H}(x_{\varepsilon,\alpha},p^1_{\varepsilon,\alpha})-
			\mathcal{H}(y_{\varepsilon,\alpha},p^2_{\varepsilon,\alpha}) \\
			\leq 
			\liminf_{\alpha \rightarrow \infty} \Lambda(x_{\varepsilon,\alpha},p^1_{\varepsilon,\alpha},\theta_{\varepsilon,\alpha})-
			\Lambda(y_{\varepsilon,\alpha},p^2_{\varepsilon,\alpha},\theta_{\varepsilon,\alpha}) \\
			+\mathcal{I}(y_{\varepsilon,\alpha},\theta_{\varepsilon,\alpha})-
			\mathcal{I}(x_{\varepsilon,\alpha},\theta_{\varepsilon,\alpha}) \leq 0,
		\end{multline}
		which establishes \eqref{eqn:difference_hamiltonians_in_comparison} and thus also the comparison principle.
		
		\smallskip
		
		We proceed with the proofs of the first four steps, as the fifth step is immediate.
		\smallskip
		
		\underline{\emph{Proof of Step 1}}: The proof of this first step is classical. We include it for completeness.
		For any $\varepsilon > 0$ and any $\alpha > 0$, define the map $\Phi_{\varepsilon,\alpha}: E \times E \to \mathbb{R}$ by
		\begin{equation*}
			\Phi_{\varepsilon,\alpha}(x,y) := \frac{u_1(x)}{1-\varepsilon} - \frac{u_2(y)}{1+\varepsilon} - \alpha \Psi(x,y) - \frac{\varepsilon}{1-\varepsilon} \Upsilon(x) - \frac{\varepsilon}{1+\varepsilon}\Upsilon(y).
		\end{equation*}
		Let $\varepsilon > 0$. By Lemma \ref{lemma:doubling_lemma}, there is a compact set $K_\varepsilon \subseteq E$ and there exist points $x_{\varepsilon,\alpha},y_{\varepsilon,\alpha} \in K_\varepsilon$ such that
		\begin{equation} \label{eqn:comparison_optimizers}
			\Phi_{\varepsilon,\alpha}(x_{\varepsilon,\alpha},y_{\varepsilon,\alpha}) = \sup_{x,y \in E} \Phi_{\varepsilon,\alpha}(x,y),
		\end{equation}
		and 
		\begin{equation}\label{eq:proof-CP:Psi-xy-converge}
			\lim_{\alpha \to \infty} \alpha \Psi(x_{\varepsilon,\alpha},y_{\varepsilon,\alpha}) = 0.
		\end{equation}
		As in the proof of Proposition~A.11 of~\cite{Kr17}, it follows that
		\begin{equation}\label{eq:proof-CP:general-bound-u1u2}
			\sup_E (u_1 - u_2) \leq \liminf_{\varepsilon \to 0} \liminf_{\alpha \to \infty} \left[ \frac{u_1(x_{\varepsilon,\alpha})}{1-\varepsilon} - \frac{u_2(y_{\varepsilon,\alpha})}{1+\varepsilon}\right].
		\end{equation}
		At this point, we want to use the sub- and supersolution properties of $u_1$ and $u_2$. Define the test functions $\varphi^{\varepsilon,\alpha}_1 \in \cD(H_\dagger), \varphi^{\varepsilon,\alpha}_2 \in \cD(H_\ddagger)$ by
		\begin{align*}
			\varphi^{\varepsilon,\alpha}_1(x) & := (1-\varepsilon) \left[\frac{u_2(y_{\varepsilon,\alpha})}{1+\varepsilon} + \alpha \Psi(x,y_{\varepsilon,\alpha}) + \frac{\varepsilon}{1-\varepsilon}\Upsilon(x) + \frac{\varepsilon}{1+\varepsilon}\Upsilon(y_{\varepsilon,\alpha})\right] \\
			& \hspace{5cm} + (1-\varepsilon)(x-x_{\varepsilon,\alpha})^2, \\
			\varphi^{\varepsilon,\alpha}_2(y) & := (1+\varepsilon)\left[\frac{u_1(x_{\varepsilon,\alpha})}{1-\varepsilon} - \alpha \Psi(x_{\varepsilon,\alpha},y) - \frac{\varepsilon}{1-\varepsilon}\Upsilon(x_{\varepsilon,\alpha}) - \frac{\varepsilon}{1+\varepsilon}\Upsilon(y)\right] \\
			& \hspace{5cm}  - (1+\varepsilon) (y-y_{\varepsilon,\alpha})^2.
		\end{align*}
		Using \eqref{eqn:comparison_optimizers}, we find that $u_1 - \varphi^{\varepsilon,\alpha}_1$ attains its supremum at $x = x_{\varepsilon,\alpha}$, and thus
		\begin{equation*}
			\sup_E (u_1-\varphi^{\varepsilon,\alpha}_1) = (u_1-\varphi^{\varepsilon,\alpha}_1)(x_{\varepsilon,\alpha}).
		\end{equation*}
		Denote $p_{\varepsilon,\alpha}^1 := \alpha \nabla_x \Psi(x_{\varepsilon,\alpha},y_{\varepsilon,\alpha})$. By our addition of the penalization $(x-x_{\varepsilon,\alpha})^2$ to the test function, the point $x_{\varepsilon,\alpha}$ is in fact the unique optimizer, and we obtain from the subsolution inequality that
		\begin{equation}\label{eq:proof-CP:subsol-ineq}
			u_1(x_{\varepsilon,\alpha}) - \lambda \left[ (1-\varepsilon) \mathcal{H}\left(x_{\varepsilon,\alpha}, p_{\varepsilon,\alpha}^1 \right) + \varepsilon C_\Upsilon\right] \leq h_1(x_{\varepsilon,\alpha}).
		\end{equation}	
		With a similar argument for $u_2$ and $\varphi^{\varepsilon,\alpha}_2$, we obtain by the supersolution inequality that
		\begin{equation}\label{eq:proof-CP:supersol-ineq}
			u_2(y_{\varepsilon,\alpha}) - \lambda \left[(1+\varepsilon)\mathcal{H}\left(y_{\varepsilon,\alpha}, p_{\varepsilon,\alpha}^2 \right) - \varepsilon C_\Upsilon\right] \geq h_2(y_{\varepsilon,\alpha}),
		\end{equation}
		where $p_{\varepsilon,\alpha}^2 := -\alpha \nabla_y \Psi(x_{\varepsilon,\alpha},y_{\varepsilon,\alpha})$. With that, estimating further in~\eqref{eq:proof-CP:general-bound-u1u2} leads to
		\begin{multline*}
			\sup_E(u_1-u_2) \leq \liminf_{\varepsilon\to 0}\liminf_{\alpha \to \infty} \bigg[\frac{h_1(x_{\varepsilon,\alpha})}{1-\varepsilon} - \frac{h_2(y_{\varepsilon,\alpha})}{1+\varepsilon} + \frac{\varepsilon}{1-\varepsilon} C_\Upsilon \\ + \frac{\varepsilon}{1+\varepsilon} C_\Upsilon  + \lambda \left[\mathcal{H}(x_{\varepsilon,\alpha},p^1_{\varepsilon,\alpha}) - \mathcal{H}(y_{\varepsilon,\alpha},p^2_{\varepsilon,\alpha})\right]\bigg].
		\end{multline*}
		Thus, \eqref{eqn:estimate_step_1} in Step 1 follows.
		
		\smallskip

		\underline{\emph{Proof of Step 2}}: Recall that $\mathcal{H}(x,p)$ is given by
		\begin{equation*}
			\mathcal{H}(x,p) = \sup_{\theta \in \Theta}\left[\Lambda(x,p,\theta) - \mathcal{I}(x,\theta)\right].
		\end{equation*}
		Since $\Lambda(x_{\varepsilon,\alpha},p^1_{\varepsilon,\alpha},\cdot) : \Theta \to \mathbb{R}$ is bounded and continuous by \ref{item:assumption:slow_regularity:continuity}  and \ref{item:assumption:slow_regularity:controlled_growth}, and $\mathcal{I}(x_{\varepsilon,\alpha},\cdot) : \Theta \to [0,\infty]$ has compact sub-level sets in $\Theta$ by~\ref{item:assumption:I:compact-sublevelsets}, there exists an optimizer $\theta_{\varepsilon,\alpha} \in\Theta$ such that
		\begin{equation} \label{eqn:choice_of_optimal_measure}
			\mathcal{H}(x_{\varepsilon,\alpha},p^1_{\varepsilon,\alpha}) = \Lambda(x_{\varepsilon,\alpha},p^1_{\varepsilon,\alpha},\theta_{\varepsilon,\alpha}) - \mathcal{I}(x_{\varepsilon,\alpha},\theta_{\varepsilon,\alpha}).
		\end{equation}
		Choosing the same point in the supremum of the second term $\mathcal{H}(y_{\varepsilon,\alpha},p^2_{\varepsilon,\alpha})$, we obtain for all $\varepsilon > 0$ and $\alpha > 0$ the estimate
		\begin{multline} \label{eqn:basic_decomposition_Hamiltonian_difference}
			\mathcal{H}(x_{\varepsilon,\alpha},p^1_{\varepsilon,\alpha})-
			\mathcal{H}(y_{\varepsilon,\alpha},p^2_{\varepsilon,\alpha})
			\leq 
			\Lambda(x_{\varepsilon,\alpha},p^1_{\varepsilon,\alpha},\theta_{\varepsilon,\alpha})-
			\Lambda(y_{\varepsilon,\alpha},p^2_{\varepsilon,\alpha},\theta_{\varepsilon,\alpha})\\
			+\mathcal{I}(y_{\varepsilon,\alpha},\theta_{\varepsilon,\alpha})-
			\mathcal{I}(x_{\varepsilon,\alpha},\theta_{\varepsilon,\alpha}).
		\end{multline}
		
		\underline{\emph{Proof of Step 3}}: 
		We will construct for each $\varepsilon > 0$ a sequence $\alpha = \alpha(\varepsilon) \rightarrow \infty$ such that the collection $(x_{\varepsilon,\alpha},y_{\varepsilon,\alpha},\theta_{\varepsilon,\alpha})$ is fundamental for $\Lambda$ with respect to $\Psi$ in the sense of Definition \ref{def:results:continuity_estimate}. We thus need to verify for each $\varepsilon > 0$ 
		\begin{enumerate}[(i)]
			\item \label{item:fundamental_1iminf}
			\begin{equation}
				\inf_\alpha \Lambda(x_{\varepsilon,\alpha},p^1_{\varepsilon,\alpha},\theta_{\varepsilon,\alpha}) > - \infty,\label{eq:proof-CP:Vx-unif-bound-below_first}
			\end{equation}
			\item \label{item:fundamental_1imsup}
			\begin{equation}
				\sup_{\alpha}\Lambda(y_{\varepsilon,\alpha},p^2_{\varepsilon,\alpha},\theta_{\varepsilon,\alpha}) < \infty \label{eq:proof-CP:Vy-unif-bound-above_first}
			\end{equation}
			\item \label{item:fundamental_compactcontrols} The set of controls $\theta_{\varepsilon,\alpha}$ is relatively compact.
		\end{enumerate}
		
		To prove \ref{item:fundamental_1iminf}, \ref{item:fundamental_1imsup} and \ref{item:fundamental_compactcontrols}, we introduce auxiliary controls $\theta_{\varepsilon,\alpha}^0$, obtained by \ref{item:assumption:I:zero-measure}, satisfying
		\begin{equation}\label{eqn:choice_of_zero_measure}
			\mathcal{I}(y_{\varepsilon,\alpha},\theta_{\varepsilon,\alpha}^0) = 0.
		\end{equation}
		We will first establish \ref{item:fundamental_1iminf} and \ref{item:fundamental_1imsup} for all $\alpha$. Then, for the proof of \ref{item:fundamental_compactcontrols}, we will construct for each $\varepsilon > 0$ a suitable subsequence $\alpha \rightarrow \infty$.
		
		\underline{\emph{Proof of Step 3, \ref{item:fundamental_1iminf} and \ref{item:fundamental_1imsup}}}: 
		
		We first establish \ref{item:fundamental_1iminf}. By the subsolution inequality~\eqref{eq:proof-CP:subsol-ineq},
		\begin{align}\label{eq:proof-CP:estimate-via-subsol}
			\frac{1}{\lambda} \inf_E\left(u_1 - h\right) & \leq (1-\varepsilon) \mathcal{H}(x_{\varepsilon,\alpha},p^1_{\varepsilon,\alpha}) + \varepsilon C_{\Upsilon}	\\
			&\leq (1-\varepsilon) \Lambda(x_{\varepsilon,\alpha},p^1_{\varepsilon,\alpha},\theta_{\varepsilon,\alpha}) + \varepsilon C_\Upsilon,\notag
		\end{align}
		and the lower bound~\eqref{eq:proof-CP:Vx-unif-bound-below_first} follows.
		
		\smallskip
		
		We next establish \ref{item:fundamental_1imsup}. By the supersolution inequality~\eqref{eq:proof-CP:supersol-ineq}, we can estimate 
		\begin{align*}
			(1+\varepsilon) \Lambda(y_{\varepsilon,\alpha},p^2_{\varepsilon,\alpha},\theta_{\varepsilon,\alpha}^0) &= (1+\varepsilon) \left[\Lambda(y_{\varepsilon,\alpha},p^2_{\varepsilon,\alpha},\theta_{\varepsilon,\alpha}^0) - \mathcal{I}(y_{\varepsilon,\alpha},\theta_{\varepsilon,\alpha}^0)\right]	\\
			&\leq \left((1+\varepsilon) \mathcal{H}\left(y_{\varepsilon,\alpha},p^2_{\varepsilon,\alpha}\right) - \varepsilon C_\Upsilon\right) + \varepsilon C_\Upsilon	\\
			&\leq \frac{1}{\lambda} \sup_E (u_2-h) + \varepsilon C_{\Upsilon} < \infty,
		\end{align*}
		and the upper bound~\eqref{eq:proof-CP:Vy-unif-bound-above_first} follows by Assumption \ref{assumption:results:regularity_of_V} \ref{item:assumption:slow_regularity:controlled_growth}.

		\smallskip

		\underline{\emph{Proof of Step 3, \ref{item:fundamental_compactcontrols}}}: 
		To prove \ref{item:fundamental_compactcontrols},  it suffices by Assumption \ref{assumption:results:regularity_I} \ref{item:assumption:I:compact-sublevelsets} to find for each $\varepsilon > 0$ a subsequence $\alpha$ such that 
		\begin{equation} \label{eqn:fundamental_controlI}
			\sup_{\alpha} \cI(x_{\varepsilon,\alpha},\theta_{\varepsilon,\alpha}) < \infty.
		\end{equation}
		By \eqref{eq:proof-CP:estimate-via-subsol}, we have
		\begin{align*}
			\frac{1}{\lambda} \inf_E\left(u_1 - h\right) & \leq (1-\varepsilon) \cH(x_{\varepsilon,\alpha},p^1_{\varepsilon,\alpha}) + \varepsilon C_{\Upsilon} \\
			& = (1-\varepsilon) \left[\Lambda(x_{\varepsilon,\alpha},p^1_{\varepsilon,\alpha},\theta_{\varepsilon,\alpha}) - \mathcal{I}(x_{\varepsilon,\alpha},\theta_{\varepsilon,\alpha}) \right] + \varepsilon C_{\Upsilon}.
		\end{align*}
		We conclude that $\sup_\alpha \mathcal{I}(x_{\varepsilon,\alpha},\theta_{\varepsilon,\alpha}) < \infty$ is implied by
		\begin{equation*}
			\sup_{\alpha} \Lambda(x_{\varepsilon,\alpha},p^1_{\varepsilon,\alpha},\theta_{\varepsilon,\alpha}) < \infty
		\end{equation*}
		which by \ref{item:assumption:slow_regularity:controlled_growth} is equivalent to
		\begin{equation} \label{eqn:fundamental_controlLambda}
			\sup_\alpha \Lambda(x_{\varepsilon,\alpha},p^1_{\varepsilon,\alpha},\theta_{\varepsilon,\alpha}^0) < \infty.
		\end{equation}
		To perform this estimate, we first write 
		\begin{multline}
			\Lambda(x_{\varepsilon,\alpha},p^1_{\varepsilon,\alpha},\theta_{\varepsilon,\alpha}^0) \\
			= \Lambda(y_{\varepsilon,\alpha},p^2_{\varepsilon,\alpha},\theta_{\varepsilon,\alpha}^0) + \left[\Lambda(x_{\varepsilon,\alpha},p^1_{\varepsilon,\alpha},\theta_{\varepsilon,\alpha}^0) -  \Lambda(y_{\varepsilon,\alpha},p^2_{\varepsilon,\alpha},\theta_{\varepsilon,\alpha}^0)\right]. \label{eqn:fundamental_bootstrap_split}
		\end{multline}
		To estimate the second term, we aim to apply the continuity estimate for the controls $\theta_{\varepsilon,\alpha}^0$. To do so, must establish that $(x_{\varepsilon,\alpha},y_{\varepsilon,\alpha},\theta_{\varepsilon,\alpha}^0)$ is fundamental for $\Lambda$ with respect to $\Psi$. By Assumption \ref{assumption:results:regularity_I} \ref{item:assumption:I:compact-sublevelsets}, for each $\varepsilon$ the set of  controls $\theta_{\varepsilon,\alpha}^0$ is relatively compact. Thus it suffices to establish
		\begin{align}
			& \inf_\alpha \Lambda(x_{\varepsilon,\alpha},p^1_{\varepsilon,\alpha},\theta_{\varepsilon,\alpha}^0) > - \infty, \label{eq:proof-CP:Vx-unif-bound-below_zero_measure} \\
			& \sup_{\alpha}\Lambda(y_{\varepsilon,\alpha},p^2_{\varepsilon,\alpha},\theta_{\varepsilon,\alpha}^0) < \infty. \label{eq:proof-CP:Vy-unif-bound-above_zero_measure}
		\end{align}
		These two estimates follow by Assumption \ref{assumption:results:regularity_of_V} \ref{item:assumption:slow_regularity:controlled_growth} and \eqref{eq:proof-CP:Vx-unif-bound-below_first} and \eqref{eq:proof-CP:Vy-unif-bound-above_first}. 
		
		The continuity estimate of Assumption \ref{assumption:results:regularity_of_V} \ref{item:assumption:slow_regularity:continuity_estimate} yields that
		\begin{equation*}
			\liminf_{\alpha \rightarrow \infty} \Lambda(x_{\varepsilon,\alpha},p^1_{\varepsilon,\alpha},\theta_{\varepsilon,\alpha}^0) -  \Lambda(y_{\varepsilon,\alpha},p^2_{\varepsilon,\alpha},\theta_{\varepsilon,\alpha}^0) \leq 0.
		\end{equation*}
		This means that there exists a subsequence, also denoted by $\alpha$ such that
		\begin{equation} \label{eqn:Lambda_estimate_via_continuity_estimate}
			\sup_{\alpha} \Lambda(x_{\varepsilon,\alpha},p^1_{\varepsilon,\alpha},\theta_{\varepsilon,\alpha}^0) -  \Lambda(y_{\varepsilon,\alpha},p^2_{\varepsilon,\alpha},\theta_{\varepsilon,\alpha}^0) < \infty.
		\end{equation}
		Thus, we can estimate \eqref{eqn:fundamental_bootstrap_split} by \eqref{eqn:Lambda_estimate_via_continuity_estimate} and \eqref{eq:proof-CP:Vy-unif-bound-above_zero_measure}. This implies that \eqref{eqn:fundamental_controlI} holds for the chosen subsequences $\alpha$ and that for these  the collection $(x_{\varepsilon,\alpha},y_{\varepsilon,\alpha},\theta_{\varepsilon,\alpha})$ is fundamental for $\Lambda$ with respect to $\Psi$ establishing Step 3.

		\smallskip

		\underline{\emph{Proof of Step 4}}:
		
		For the subsequences constructed in Step 3, we have by \eqref{eqn:fundamental_controlI} that
		\begin{equation} \label{eqn:fundamental_controlI_sketch}
			\sup_{\alpha} \cI(x_{\varepsilon,\alpha},\theta_{\varepsilon,\alpha}) < \infty.
		\end{equation}
		As established in Step 1, following Lemma \ref{lemma:doubling_lemma}, for each $\varepsilon > 0$ the set $\{(x_{\varepsilon,\alpha},y_{\varepsilon,\alpha})\}$ is relatively compact where $\alpha$ varies over the subsequences selected in Step 3. In addition, for each $\varepsilon > 0$ there exists $z \in E$ such that $(x_{\varepsilon,\alpha},y_{\varepsilon,\alpha}) \rightarrow (z,z)$. It follows by \eqref{eqn:fundamental_controlI_sketch} and Assumption \ref{assumption:results:regularity_I} \ref{item:assumption:I:finiteness} that also
		\begin{equation} \label{eqn:fundamental_controlI_sketch2}
			\sup_{\alpha} \cI(y_{\varepsilon,\alpha},\theta_{\varepsilon,\alpha}) < \infty.
		\end{equation}
		With the bounds~\eqref{eqn:fundamental_controlI_sketch} and~\eqref{eqn:fundamental_controlI_sketch2}, the estimate~\ref{item:assumption:I:equi-cont} is satisfied for the subsequences $(x_{\varepsilon,\alpha},y_{\varepsilon,\alpha},\theta_{\varepsilon,\alpha})$. 
	\end{proof}

	\section{Existence of viscosity solutions}
	\label{section:construction-of-viscosity-solutions}

	In this section, we will prove Theorem \ref{theorem:existence_of_viscosity_solution}. In other words, we show that for~$h\in C_b(E)$ and~$\lambda>0$, the function~$R(\lambda)h$ given by
	\begin{equation*}
		R(\lambda) h(x) = \sup_{\substack{\gamma \in \mathcal{A}\mathcal{C} \\ \gamma(0) = x}} \int_0^\infty \lambda^{-1} e^{-\lambda^{-1}t} \left[h(\gamma(t)) - \int_0^t \mathcal{L}(\gamma(s),\dot{\gamma}(s))\right] \, \dd t
	\end{equation*}
	is indeed a viscosity solution to $f - \lambda \bfH f = h$. To do so, we will use the methods of Chapter 8 of \cite{FK06}.	For this strategy, one needs to check three properties of $R(\lambda)$:
	\begin{enumerate}[(a)]
		\item For all $(f,g) \in \bfH$, we have $f = R(\lambda)(f - \lambda g)$.
		\item The operator $R(\lambda)$ is a pseudo-resolvent: for all $h \in C_b(E)$ and $0 < \alpha < \beta$ we have 
		\begin{equation*}
			R(\beta)h = R(\alpha) \left(R(\beta)h - \alpha \frac{R(\beta)h - h}{\beta} \right).
		\end{equation*}
		\item The operator $R(\lambda)$ is contractive. 
	\end{enumerate}
	Thus, if $R(\lambda)$ serves as a classical left-inverse to $\bONE - \lambda \bfH$ and is also a pseudo-resolvent, then it is a viscosity right-inverse of $(\bONE- \lambda \bfH)$. For a second proof of this statement, outside of the control theory context, see Proposition 3.4 of \cite{Kr19}.
	\smallskip
	
	Establishing (c) is straightforward. The proof of (a) and (b) stems from two main properties of exponential random variable. Let $\tau_\lambda$ be the measure on $\bR^+$ corresponding to the exponential random variable with mean $\lambda^{-1}$.
	\begin{itemize}
		\item (a) is related to integration by parts: for bounded measurable functions $z$ on $\bR^+$, we have
		\begin{equation*}
			\lambda \int_0^\infty  z(t) \, \tau_\lambda( \dd t) = \int_0^\infty \int_0^t z(s) \, \dd s \, \tau_\lambda(\dd t).
		\end{equation*}
		\item (b) is related to a more involved integral property of exponential random variables. For $0 < \alpha < \beta$, we have
		\begin{multline*}
			\int_0^\infty z(s) \tau_\beta(\dd s) \\
			= \frac{\alpha}{\beta} \int_0^\infty z(s) \tau_\alpha(\dd s) + \left(1 - \frac{\alpha}{\beta}\right) \int_0^\infty \int_0^\infty z(s+u) \, \tau_\beta(\dd u) \, \tau_\alpha(\dd s).
		\end{multline*}
	\end{itemize}
	Establishing (a) and (b) can then be reduced by a careful analysis of optimizers in the definition of $R(\lambda)$, and concatenation or splittings thereof. This was carried out in Chapter 8 of \cite{FK06} on the basis of three assumptions, namely \cite[Assumptions 8.9, 8.10 and 8.11]{FK06}. We verify these below.

	\begin{proof}[Verification of Conditions 8.9, 8.10 and 8.11]
		In the notation of \cite{FK06}, we use $U = \bR^d$, $\Gamma = E \times U$, one operator $\bfH = \bfH_\dagger = \bfH_\ddagger$  and $Af(x,u) = \ip{\nabla f(x)}{u}$ for $f \in \mathcal{D}(\mathbf{H}) = C_{cc}^\infty(E)$.
		
		\smallskip
		
		Regarding Condition~8.9, by continuity and convexity of $\cH$ obtained in Propositions \ref{prop:reg-of-H-and-L:reg-H} and \ref{prop:reg-of-H-and-L:continuity}, parts 8.9.1, 8.9.2, 8.9.3 and 8.9.5 can be proven e.g. as in the proof of \cite[Lemma 10.21]{FK06} for $\psi = 1$. Part 8.9.4 is a consequence of the existence of a containment function, and follows as shown in the proof of Theorem~A.17 of \cite{CoKr17}. Since we use the argument further below, we briefly recall it here. We need to show that for any compact set $K \subseteq E$, any finite time $T > 0$ and finite bound $M \geq 0$, there exists a compact set $K' = K'(K,T,M) \subseteq E$ such that for any absolutely continuous path $\gamma :[0,T] \to E$ with $\gamma(0) \in K$, if
		\begin{equation} \label{eqn:control_on_L}
			\int_0^T \mathcal{L}(\gamma(t),\dot{\gamma}(t)) \, dt \leq M,
		\end{equation}
		then $\gamma(t) \in K'$ for any $0\leq t \leq T$.
		\smallskip
		
		For $K\subseteq E$, $T>0$, $M\geq 0$ and $\gamma$ as above, this follows by noting that
		\begin{align}
			\label{eq:action_integral_representation:Lyapunov_bound}
			\Upsilon(\gamma(\tau)) &=
			\Upsilon(\gamma(0)) + \int_0^\tau \nabla\Upsilon(\gamma(t)) \dot{\gamma}(t) \, dt \notag	\\
			&\leq \Upsilon(\gamma(0)) + \int_0^\tau \left[
			\mathcal{L}(\gamma(t),\dot{\gamma}(t))) + \mathcal{H}(x(t),\nabla \Upsilon(\gamma(t)))
			\right] \, dt \notag	\\
			&\leq \sup_K \Upsilon + M + T \sup_{x \in E} \mathcal{H}(x,\nabla \Upsilon(x)) =: C < \infty,
		\end{align}
		for any $0 \leq \tau \leq T$, so that the compact set $K' := \{z \in E \,:\, \Upsilon(z) \leq C\}$ satisfies the claim.
		
		\smallskip
		
		We proceed with the verification of Conditions~8.10 and 8.11 of~\cite{FK06}. By Proposition~\ref{prop:reg-of-H-and-L:reg-H}, we have $\cH(x,0) = 0$ and hence the application of $\bfH$ to constant $1$ function $\bONE$ satisfies $\bfH \bONE = 0$. Thus, Condition 8.10 is implied by Condition 8.11 (see Remark 8.12 (e) in~\cite{FK06}).
		
		\smallskip
		
		We establish that Condition 8.11 is satisfied: for any function $f\in \mathcal{D}(\bfH) = C_{cc}^\infty(E)$ and $x_0 \in E$, there exists an absolutely continuous path $x:[0,\infty) \to E$ such that $x(0) = x_0$ and for any $t \geq 0$,
		\begin{equation}
			\label{eq:action_integral_representation:solution_control_problem}
			\int_0^t \mathcal{H}(x(s),\nabla f(x(s)) \, ds =
			\int_0^t \left[
			\dot{x}(s) \cdot \nabla f(x(s)) - \mathcal{L}(x(s),\dot{x}(s))
			\right] \, ds.
		\end{equation}
		To do so, we solve the differential inclusion
		\begin{equation}
			\label{eq:action_integral_representation:subdifferential_eq}
			\dot{x}(t) \in \partial_p \mathcal{H}(x(t),\nabla f(x(t))), \qquad x(0) = x_0,
		\end{equation}
		where the subdifferential of $\cH$ was defined in \eqref{eqn:subdifferential} on page \pageref{eqn:subdifferential}.
		\smallskip
		
		Since the addition of a constant to $f$ does not change the gradient, we may assume without loss of generality that $f$ has compact support. A general method to establish existence of differential inclusions $\dot{x} \in F(x)$ is given by Lemma 5.1 of Deimling \cite{De92}. We have included this result as Lemma \ref{lemma:solve_differential_inclusion}, and corresponding preliminary definitions in Appendix \ref{appendix:differential_inclusions}. We use this result for $F(x) := \partial_p \cH(x,\nabla f(x))$. To apply Lemma \ref{lemma:solve_differential_inclusion}, we need to verify that:
		\begin{enumerate}[(F1)]
			\item $F$ is upper hemi-continuous and $F(x)$ is non-empty, closed, and convex for all $x \in E$.
			\item $\|F(x)\| \leq c(1 + |x|)$ on $E$, for some $c > 0$.
			\item $F(x) \cap T_E(x) \neq \emptyset$ for all $x \in E$. (For the definition of $T_E$, see Definition \ref{definition:tangent_cone} on page \pageref{eqn:subdifferential}). 
		\end{enumerate}
		Part (F1) follows from the properties of subdifferential sets
		of convex and continuous functionals. $\cH$ is continuous in $(x,p)$ and convex in $p$ by Proposition \ref{prop:reg-of-H-and-L:reg-H}.
		Part (F3) is a consequence of Lemma \ref{lemma:Hamiltonian_vector_flow_from_Lambda}, which yields that~$F(x)\subseteq T_E(x)$. 
		Part (F2) is in general not satisfied. To circumvent this problem, we use properties of $\cH$ to establish a-priori bounds on the range of solutions.
		
		\smallskip
		
		\emph{Step 1:} Let $T > 0$, and assume that $x(t)$ solves \eqref{eq:action_integral_representation:subdifferential_eq}. We establish that there is some $M$ such that~\eqref{eqn:control_on_L} is satisfied. By~\eqref{eq:action_integral_representation:subdifferential_eq} we obtain for all $p \in \mathbb{R}^d$,
		\[
		\mathcal{H}(x(t),p) \geq \mathcal{H}(x(t),\nabla f(x(t))) + \dot{x}(t) \cdot (p - \nabla f(x(t))),
		\]
		and as a consequence
		\[
		\dot{x}(t) \nabla f(x(t)) - \mathcal{H}(x(t),\nabla f(x(t))) \geq
		\mathcal{L}(x(t),\dot{x}(t)).
		\]
		Since $f$ has compact support and $\mathcal{H}(y,0) = 0$ for any $y \in E$, we estimate 
		\begin{align*}
			\int_0^T \mathcal{L}(x(t),\dot{x}(t)) \, ds &\leq
			\int_0^T \dot{x}(t) \nabla f(x(t)) \, dt - T\inf_{y \in \mathrm{supp}(f)} \mathcal{H}(y,\nabla f(y)).
		\end{align*}
		By continuity of $\mathcal{H}$ the field $F$ is bounded on compact sets, so the first term can be bounded by
		\[
		\int_0^T \dot{x}(t) \nabla f(x(t)) \, dt \leq
		T \sup_{y \in \mathrm{supp}(f)}\|F(y)\| \sup_{z \in \mathrm{supp}(f)}|\nabla f(z)|.
		\]
		Therefore, for any $T>0$, we obtain that the integral over the Lagrangian is bounded from above by $M = M(T)$, with
		\[
		M := T \sup_{y \in \mathrm{supp}(f)}\|F(y)\| \sup_{z \in \mathrm{supp}(f)}|\nabla f(z)| -
		\inf_{y \in \mathrm{supp}(f)} \mathcal{H}(y,\nabla f(y)).
		\]
		From the first part of the, see the argument concluding after \eqref{eq:action_integral_representation:Lyapunov_bound}, we find that the solution $x(t)$ remains in the compact set
		\begin{equation} \label{eqn:containment_set_existence}
			K' := \left\{
			z \in E \, \middle| \, \Upsilon(z) \leq C 
			\right\}, \quad C := \Upsilon(x_0) + M + T \sup_x \cH(x,\nabla \Upsilon(x)),
		\end{equation}
		for all $t \in [0,T]$.
		
		\smallskip
		
		\emph{Step 2}: We prove that there exists a solution $x(t)$ of  \eqref{eq:action_integral_representation:subdifferential_eq} on $[0,T]$. 

		Using $F$, we define a new multi-valued vector-field $F'(z)$ that equals $F(z) = \partial_p \mathcal{H}(z,\nabla f(z))$ inside $K'$, but equals $\{0\}$ outside a neighborhood of $K$. This can e.g. be achieved by multiplying with a smooth cut-off function $g_{K'} : E \to [0,1]$ that is equal to one on $K'$ and zero outside of a neighborhood of $K'$.
		\smallskip
		
		The field $F'$ satisfies (F1), (F2) and (F3) from above, and hence there exists an absolutely continuous path $y : [0,\infty) \to E$ such that $y(0) = x_0$ and for almost every $t \geq 0$,
		\[
		\dot{y}(t) \in F'(y(t)).
		\]
		By the estimate established in step 1 and the fact that $\Upsilon(\gamma(t)) \leq C$ for any $0 \leq t \leq T$, it follows from the argument as shown above in \eqref{eq:action_integral_representation:Lyapunov_bound} that the solution $y$ stays in $K'$ up to time $T$. Since on $K'$, we have $F' = F$, this implies that setting $x = y|_{[0,T]}$, we obtain a solution $x(t)$ of \eqref{eq:action_integral_representation:subdifferential_eq} on the time interval $[0,T]$.
	\end{proof}
	
	\begin{lemma} \label{lemma:Hamiltonian_vector_flow_from_Lambda}
		Let Assumption~\ref{assumption:Hamiltonian_vector_field} be satisfied. Then the map $\cH : E \times \bR^d \rightarrow \bR$ defined in~\eqref{eq:results:variational_hamiltonian} is such that $\partial_p \cH(x,p) \subseteq T_E(x)$ for all $p$ and $x \in E$.
	\end{lemma}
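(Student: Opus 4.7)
The plan is to exploit the closed convex structure of $E$ together with convexity of $p \mapsto \Lambda(x,p,\theta)$ and the hypothesis $\partial_p \Lambda(x,p,\theta) \subseteq T_E(x)$ to show that $\mathcal{H}(x,\cdot)$ is non-increasing in every direction that lies in the normal cone at $x$. Since $E$ is closed and convex, $T_E(x)$ is a closed convex cone whose polar cone is the normal cone $N_E(x) = \{\nu \in \mathbb{R}^d : \nu \cdot (y-x) \leq 0 \text{ for all } y \in E\}$, and by bipolarity $T_E(x) = N_E(x)^\circ$. Hence, to show $\xi \in T_E(x)$ for an arbitrary $\xi \in \partial_p \mathcal{H}(x,p_0)$, it suffices to verify $\xi \cdot \nu \leq 0$ for every $\nu \in N_E(x)$.

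The first key step is the pointwise monotonicity: for every $\theta \in \Theta$, every $q \in \mathbb{R}^d$, and every $\nu \in N_E(x)$, the right-hand derivative of $t \mapsto \Lambda(x, q+t\nu, \theta)$ at $t=0$ equals $\sup_{\eta \in \partial_p \Lambda(x,q,\theta)} \eta \cdot \nu$ by convexity of $p \mapsto \Lambda(x,p,\theta)$ (Assumption \ref{assumption:results:regularity_of_V}~\ref{item:assumption:slow_regularity:convexity}). By Assumption~\ref{assumption:Hamiltonian_vector_field}, every such $\eta$ lies in $T_E(x)$, so $\eta \cdot \nu \leq 0$ by the polarity definition of $N_E(x)$. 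Thus the right-hand derivative is non-positive at every $q$, which by convexity implies that $t \mapsto \Lambda(x, p_0 + t\nu, \theta)$ is non-increasing on $[0,\infty)$.

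The second step transfers this monotonicity through the supremum. Since $\mathcal{I}(x,\theta)$ does not depend on $p$, for all $t \geq 0$,
\begin{equation*}
    \mathcal{H}(x, p_0 + t\nu) = \sup_{\theta \in \Theta} \bigl[\Lambda(x, p_0 + t\nu, \theta) - \mathcal{I}(x,\theta)\bigr] \leq \sup_{\theta \in \Theta} \bigl[\Lambda(x, p_0, \theta) - \mathcal{I}(x,\theta)\bigr] = \mathcal{H}(x, p_0).
\end{equation*}
On the other hand, the defining supporting inequality for $\xi \in \partial_p \mathcal{H}(x, p_0)$ gives $\mathcal{H}(x, p_0 + t\nu) \geq \mathcal{H}(x, p_0) + t\, \xi \cdot \nu$. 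Combining the two yields $t\, \xi \cdot \nu \leq 0$ for all $t > 0$, whence $\xi \cdot \nu \leq 0$. Since $\nu \in N_E(x)$ was arbitrary, $\xi \in N_E(x)^\circ = T_E(x)$, completing the proof.

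No serious obstacle is expected: the bootstrap from $\Lambda$ to $\mathcal{H}$ is clean because $\mathcal{I}(x,\theta)$ is $p$-independent, so the directional monotonicity on the "internal" level is preserved under the supremum. The only place to be careful is invoking the polar duality $T_E(x) = N_E(x)^\circ$, which requires $E$ closed and convex as provided by Assumption~\ref{assumption:Hamiltonian_vector_field}; and the identification of right-hand derivatives of finite convex functions with the support function of their subdifferentials, which is a standard fact of convex analysis.
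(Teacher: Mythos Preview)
Your proof is correct and takes a genuinely different route from the paper's. The paper proceeds by identifying $\partial_p \mathcal{H}(x,p_0)$ explicitly as the convex hull of $\bigcup_{\theta \in \Omega(p_0)} \partial_p \Lambda(x,p_0,\theta)$, where $\Omega(p_0)$ is the set of optimal controls, via a subdifferential-of-supremum formula from Hiriart-Urruty and Lemar\'echal; to apply that result it must first restrict the supremum in $\theta$ to a compact set, which it achieves using Assumptions~\ref{assumption:results:regularity_I}~\ref{item:assumption:I:zero-measure},~\ref{item:assumption:I:compact-sublevelsets} and~\ref{assumption:results:regularity_of_V}~\ref{item:assumption:slow_regularity:controlled_growth}. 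You instead exploit the polar duality $T_E(x) = N_E(x)^\circ$ available for closed convex $E$, reduce the claim to the monotonicity of $\mathcal{H}(x,\cdot)$ along normal directions, obtain this monotonicity pointwise in $\theta$ from the hypothesis $\partial_p \Lambda \subseteq T_E(x)$ via the directional-derivative formula for convex functions, and pass it through the supremum using only that $\mathcal{I}$ is $p$-independent. Your argument is shorter, self-contained, and uses strictly fewer assumptions (no growth bound~\ref{item:assumption:slow_regularity:controlled_growth}, no compact sublevel sets of $\mathcal{I}$, no zero-cost control); the paper's approach, on the other hand, yields the finer structural description of $\partial_p \mathcal{H}$ as a convex hull, which is more than is needed for the lemma.
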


	\begin{proof}
		Fix $x \in E$ and $p_0 \in \bR^d$. We aim to prove that $\partial_p \cH(x,p_0) \subseteq T_E(x)$. Recall the definition of $\cH$:
		\begin{equation} \label{eqn:Hamiltonian_in_subdifferential_proof}
			\cH(x,p) = \sup_{\theta \in \Theta} \left\{\Lambda(x,p,\theta) - \cI(x,\theta) \right\}.
		\end{equation}
		Let $\Omega(p) \subseteq \Theta$ be the set of controls that optimize $\cH$: thus if $\theta \in \Omega(p)$ then $\cH(x,p) = \Lambda(x,p,\theta) - \cI(x,\theta)$.
		\smallskip 
		
		The result will follow from the following claim,
		\begin{equation} \label{eqn:proof_hamflow_inclusion}
			\partial_p \cH(x,p_0) = ch \bigcup_{\theta \in \Omega(p_0)} \partial_p \Lambda(x,p_0,\theta),
		\end{equation}
		where $ch$ denotes the convex hull. Having established this claim, the result follows from Assumption \ref{assumption:Hamiltonian_vector_field} and the fact that $T_E(x)$ is a convex set by Lemma \ref{proposition:tangent_cone_convex}.
		
		\smallskip

		We start with the proof of \eqref{eqn:proof_hamflow_inclusion}. For this we will use~\cite[Theorem D.4.4.2]{HULe01}. To study the subdifferential of the function $\partial_{p} \cH(x,p_0)$, it suffices to restrict the domain of the map $p \mapsto \cH(x,p)$ to the closed ball $B_1(p_0)$ around $p_0$ with radius $1$.
		
		To apply \cite[Theorem D.4.4.2]{HULe01} for this restricted map, first recall that $\Lambda$ is continuous by Assumption \ref{assumption:results:regularity_of_V} \ref{item:assumption:slow_regularity:continuity} and that $\cI$ is lower semi-continuous by Assumption \ref{assumption:results:regularity_I} \ref{item:assumption:I:lsc}. Secondly, we need to find a compact set $\Omega \subseteq \Theta$ such that we can restrict the supremum (for any $p \in B_1(p_0))$ in \eqref{eqn:Hamiltonian_in_subdifferential_proof} to $\Omega$:
		\begin{equation*}
			\cH(x,p) = \sup_{\theta \in \Omega} \left\{\Lambda(x,p,\theta) - \cI(x,\theta) \right\}.
		\end{equation*}
		In particular, we show that 
		we can take for $\Omega$ a sublevelset of $\cI(x,\cdot)$ which is compact by Assumption \ref{assumption:results:regularity_I} \ref{item:assumption:I:compact-sublevelsets}. 
		
		\smallskip
		
		Let $\theta_{x}^0$ be the control such that $\cI(x,\theta_{x}^0) = 0$, which exists due to Assumption \ref{assumption:results:regularity_I} \ref{item:assumption:I:zero-measure}. Let $M^*$ be such that (with the constants $M,C_1,C_2$ as in Assumption \ref{assumption:results:regularity_of_V} \ref{item:assumption:slow_regularity:controlled_growth})
		\begin{equation*}
			M^* = \sup_{p \in B_1(p_0)} \left\{ \max\left\{M,C_1 \Lambda(x,p,\theta_{x}^0) + C_2\right\} - \Lambda(x,p,\theta_{x}^0) \right\} < \infty.
		\end{equation*}
		Note that $M^*$ is finite as $p \mapsto \Lambda(x,p,\theta_{x}^0)$ is continuous on the closed unit ball $B_1(p_0)$. Then we find, due to Assumption \ref{assumption:results:regularity_of_V} \ref{item:assumption:slow_regularity:controlled_growth}, that if $\theta$ satisfies $\cI(x,\theta) > M^*$, then for any~$p\in B_1(p_0)$ we have
		\begin{equation*}
			\Lambda(x,p,\theta) - \cI(x,\theta) < \Lambda(x,p,\theta_{x}^0) \leq \cH(x_0,p).
		\end{equation*}
		We obtain that if $p \in B_1(p_0)$, then we can restrict our supremum in \eqref{eqn:Hamiltonian_in_subdifferential_proof} to the compact set $\Omega := \Theta_{\{x\},M^*}$, see Assumption \ref{assumption:results:regularity_I} \ref{item:assumption:I:compact-sublevelsets}.
		
		\smallskip
		Thus, it follows by \cite[Theorem D.4.4.2]{HULe01} that 
		\begin{equation*}
			\partial_p \cH(x,p_0) = ch \left(\bigcup_{\theta \in \Theta_{\{x\},\bar{M}^*}} \partial_p \left(\Lambda(x,p_0,\theta) - \cI(x,\theta) \right)\right),
		\end{equation*}
		where $ch$ denotes the convex hull. Now \eqref{eqn:proof_hamflow_inclusion} follows by noting that $\cI(x,\theta)$ does not depend on $p$. 
	\end{proof}

	\section{Examples of Hamiltonians}
	\label{section:verification-for-examples-of-Hamiltonians}
	In this section we specify our general results to two concrete examples of Hamiltonians of the type 
	\begin{equation}\label{eq:results:H-example-section}
		\mathcal{H}(x,p) = \sup_{\theta \in \Theta}\left[\Lambda(x,p,\theta) - \mathcal{I}(x,\theta)\right].
	\end{equation}
	The purpose of this section is to showcase that the method introduced in this paper is versatile enough to capture interesting examples that could not be treated before.
	
	\smallskip

	First, we consider in Proposition~\ref{prop:diffusion-coupled-to-jumps} Hamiltonians that one encounters in the large deviation analysis of two-scale systems as studied in \cite{BuDuGa18} and \cite{KuPo17} when considering a diffusion process coupled to a fast jump process. Second,  we consider in Proposition~\ref{prop:mean-field-coupled-to-diffusion} the example treated in our introduction that arises from models of mean-field interacting particles that are coupled to fast external variables. This example will be further analyzed in \cite{KrSchl20}.
	\begin{proposition}[Diffusion coupled to jumps]\label{prop:diffusion-coupled-to-jumps}
		Let $E=\mathbb{R}^d$ and $F=\{1,\dots,J\}$ be a finite set. Suppose the following:
		\begin{enumerate}[label=(\roman*)]
			\item The set of control variables is $\Theta:=\mathcal{P}(\{1,\dots,J\})$, that is probability measures over the finite set $F$. 
			\item The function $\Lambda$ is given by
			\begin{equation*} 
				\Lambda(x,p,\theta) := \sum_{i\in F}\left[\ip{a(x,i)p}{p}+\ip{b(x,i)}{p}\right]\theta_i,
			\end{equation*}
			where $a:E\times F\to\mathbb{R}^{d\times d}$ and $b:E\times F\to\mathbb{R}^d$, and $\theta_i:=\theta(\{i\})$.
			\item The cost function $\mathcal{I}:E\times\Theta\to[0,\infty)$ is given by
			\begin{equation*}
				\mathcal{I}(x,\theta) := \sup_{w\in\mathbb{R}^J}\sum_{ij}r(i,j,x)\theta_i \left[1-e^{w_j-w_i}\right],
			\end{equation*}
			with non-negative rates $r:F^2\times E\to[0,\infty)$.
		\end{enumerate}
		Suppose that the cost function~$\mathcal{I}$ satisfies the assumptions of Proposition~\ref{prop:verify:DV-for-Jumps} below and the function~$\Lambda$ satisfies the assumptions of Proposition~\ref{prop:verify-ex:Lambda_quadratic} below. Then Theorems~\ref{theorem:comparison_principle_variational} and~\ref{theorem:existence_of_viscosity_solution} apply to the Hamiltonian~\eqref{eq:results:H-example-section}.
	\end{proposition}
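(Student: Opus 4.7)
The plan is to verify the three standing hypotheses of our main theorems, namely Assumptions \ref{assumption:results:regularity_of_V}, \ref{assumption:results:regularity_I} and \ref{assumption:Hamiltonian_vector_field}, for the concrete $\Lambda$ and $\cI$ given in the statement, after which Theorems \ref{theorem:comparison_principle_variational} and \ref{theorem:existence_of_viscosity_solution} apply directly. The verification splits naturally into three parts, one per assumption.

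First, Assumption \ref{assumption:Hamiltonian_vector_field} is essentially automatic in this example: since $E=\bR^d$ is closed and convex, the tangent cone satisfies $T_E(x)=\bR^d$ for every $x$, so the inclusion $\partial_p\Lambda(x,p,\theta)\subseteq T_E(x)$ holds trivially. Note that no boundary issues arise, which also simplifies the containment argument.

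Second, I would verify Assumption \ref{assumption:results:regularity_I} by appealing to the forthcoming Proposition~\ref{prop:verify:DV-for-Jumps}, whose hypotheses on the jump rates $r$ are assumed in the statement. The functional $\cI(x,\theta)$ here is the standard Donsker--Varadhan rate functional for a jump process on the finite set $F$ with rates $r(i,j,x)$, expressed as a supremum of bounded linear functionals in $\theta$. This representation immediately gives lower semi-continuity \ref{item:assumption:I:lsc}, and the invariant distribution of the jump kernel $r(\cdot,\cdot,x)$ (or any convex combination thereof on irreducible components) provides the zero-measure in \ref{item:assumption:I:zero-measure}. Compactness of sublevel sets \ref{item:assumption:I:compact-sublevelsets} follows from the finiteness of $F$, which makes $\Theta=\cP(F)$ itself compact. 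The finiteness property \ref{item:assumption:I:finiteness} and equi-continuity \ref{item:assumption:I:equi-cont} reduce to the joint continuity of $r$ in $x$, which is precisely what Proposition~\ref{prop:verify:DV-for-Jumps} is designed to extract.

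Third, I would verify Assumption \ref{assumption:results:regularity_of_V} by appealing to the forthcoming Proposition~\ref{prop:verify-ex:Lambda_quadratic}. Continuity \ref{item:assumption:slow_regularity:continuity}, convexity of $p\mapsto\Lambda(x,p,\theta)$ together with $\Lambda(x,0,\theta)=0$ in \ref{item:assumption:slow_regularity:convexity} are immediate from the explicit quadratic form of $\Lambda$ and non-negativity of $a(x,i)$ (assumed in Proposition~\ref{prop:verify-ex:Lambda_quadratic}). The containment function $\Upsilon$ in \ref{item:assumption:slow_regularity:compact_containment} is the standard choice $\Upsilon(x)=\tfrac{1}{2}\log(1+|x|^2)$ (or a similar growth-matched function), whose validity reduces to the growth of $a$ and $b$ controlled by the assumptions of Proposition~\ref{prop:verify-ex:Lambda_quadratic}. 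The controlled growth \ref{item:assumption:slow_regularity:controlled_growth} uses that $\Lambda$ is linear in $\theta\in\cP(F)$ with finite $F$, so bounds uniform in $\theta$ follow from the finitely many coefficient functions $a(x,i),b(x,i)$ on compact $x$-sets.

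The main obstacle is the continuity estimate \ref{item:assumption:slow_regularity:continuity_estimate}, which must hold uniformly over $\theta$ in compact subsets of $\cP(F)$. My plan is to exploit the quadratic structure: given a fundamental collection $(x_{\varepsilon,\alpha},y_{\varepsilon,\alpha},\theta_{\varepsilon,\alpha})$ with the standard penalization $\Psi(x,y)=\tfrac{1}{2}|x-y|^2$, the momenta satisfy $p^1_{\varepsilon,\alpha}=p^2_{\varepsilon,\alpha}=\alpha(x_{\varepsilon,\alpha}-y_{\varepsilon,\alpha})=:p_{\varepsilon,\alpha}$, and the difference $\Lambda(x_{\varepsilon,\alpha},p_{\varepsilon,\alpha},\theta_{\varepsilon,\alpha})-\Lambda(y_{\varepsilon,\alpha},p_{\varepsilon,\alpha},\theta_{\varepsilon,\alpha})$ decomposes into a quadratic term $\sum_i\langle[a(x_{\varepsilon,\alpha},i)-a(y_{\varepsilon,\alpha},i)]p_{\varepsilon,\alpha},p_{\varepsilon,\alpha}\rangle\theta_{\varepsilon,\alpha,i}$ and a linear term $\sum_i\langle b(x_{\varepsilon,\alpha},i)-b(y_{\varepsilon,\alpha},i),p_{\varepsilon,\alpha}\rangle\theta_{\varepsilon,\alpha,i}$. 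Under suitable Lipschitz-type regularity on $a$ and $b$ (as imposed in Proposition~\ref{prop:verify-ex:Lambda_quadratic}), each term is bounded by $C|x_{\varepsilon,\alpha}-y_{\varepsilon,\alpha}|(1+|p_{\varepsilon,\alpha}|^2)$, and the uniform bound $\sup_\alpha\Lambda(y_{\varepsilon,\alpha},p_{\varepsilon,\alpha},\theta_{\varepsilon,\alpha})<\infty$ together with the relation $\alpha\Psi(x_{\varepsilon,\alpha},y_{\varepsilon,\alpha})\to 0$ allows one to conclude that this product vanishes in the limit. The finite-dimensional simplex $\cP(F)$ makes the uniformity in $\theta$ automatic because any collection of controls is relatively compact.

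Once both assumptions are verified, the conclusion is immediate: Theorem~\ref{theorem:comparison_principle_variational} gives the comparison principle, and Theorem~\ref{theorem:existence_of_viscosity_solution} gives existence of the viscosity solution in the form of the discounted value function $R(\lambda)h$.
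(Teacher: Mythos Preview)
Your proposal is correct and follows exactly the same approach as the paper: verify Assumptions \ref{assumption:results:regularity_of_V}, \ref{assumption:results:regularity_I} and \ref{assumption:Hamiltonian_vector_field} by deferring to Propositions~\ref{prop:verify-ex:Lambda_quadratic} and~\ref{prop:verify:DV-for-Jumps}, and observe that $E=\bR^d$ makes the tangent-cone condition trivial. The paper's own proof is three lines doing precisely this.

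One small remark on your sketch of the continuity estimate \ref{item:assumption:slow_regularity:continuity_estimate}: you combine a Lipschitz bound $C|x-y|(1+|p|^2)$ with the coercivity implied by $\sup_\alpha \Lambda(y_{\varepsilon,\alpha},p_{\varepsilon,\alpha},\theta_{\varepsilon,\alpha})<\infty$ to force $|p_{\varepsilon,\alpha}|$ bounded. This works, but the paper (in the proof of Proposition~\ref{prop:verify-ex:Lambda_quadratic}) uses coercivity alone via Proposition~\ref{proposition:continuity_estimate_coercivity}: uniform ellipticity of $a$ on compacts makes $\Lambda$ uniformly coercive in $p$, so the upper bound on $\Lambda$ already confines the momenta to a bounded set, and then plain continuity of $\Lambda$ finishes the estimate without ever invoking the Lipschitz hypothesis on $a$. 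Your route is not wrong, just slightly less direct.
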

	
	\begin{proof}
		To apply Theorems \ref{theorem:comparison_principle_variational} and \ref{theorem:existence_of_viscosity_solution}, we need to verify Assumptions \ref{assumption:results:regularity_of_V}, \ref{assumption:results:regularity_I} and \ref{assumption:Hamiltonian_vector_field}.
		Assumption \ref{assumption:results:regularity_of_V} follows from Proposition~\ref{prop:verify-ex:Lambda_quadratic}, Assumption \ref{assumption:results:regularity_I} follows from Proposition~\ref{prop:verify:DV-for-Jumps} and Assumption \ref{assumption:Hamiltonian_vector_field} is satisfied as $E = \bR^d$.
	\end{proof}
	
	\begin{remark}
		We assume uniform ellipticity of $a$, which we use to establish \ref{item:assumption:slow_regularity:controlled_growth}. This leaves our comparison principle slightly lacking to prove a large deviation principle as general as in \cite{BuDu19}. In contrast, we do not need a Lipschitz condition on $r$ in terms of $x$.
		
		While we believe that the conditions on $a$ can be relaxed by performing a finer analysis of the estimates in terms of $a$, we do not pursue this relaxation here. 
	\end{remark}
	
	\begin{remark}
		The cost function is the large deviation rate function for the occupation time measures of a jump process taking values in a finite set $\{1,\dots,J\}$, see e.g.~\cite{DoVa75b,Ho08}.
	\end{remark}
	
	\begin{remark}
		In the context with $a = 0$ and $\cI$ as general as Assumption \ref{assumption:results:regularity_I}, we improve upon the results of Chapter III of \cite{BaCD97} by allowing a more general class of functionals $\cI$, that are e.g. discontinuous as for example in Proposition \ref{prop:mean-field-coupled-to-diffusion} below.
		
		\smallskip
		
		In \cite{DLLe11} the authors consider a second order Hamilton-Jacobi-Bellman equation, with the quadratic part replaced by a second order part. They work, however, with continuous cost functional $\cI$. An extension of \cite{DLLe11} that allows for a similar flexibility in the choice of $\cI$ would therefore be of interest.
	\end{remark}
	
	\begin{remark}
		Under irreducibility conditions on the rates, as we shall assume below in Proposition~\ref{prop:verify:DV-for-Jumps}, by~\cite{DoVa75a} the Hamiltonian~$\mathcal{H}(x,p)$ is the principal eigenvalue of the matrix $A_{x,p} \in \mathrm{Mat}_{J \times J}(\mathbb{R})$ given by
		\[
		A_{x,p} = \mathrm{diag}\left[\ip{a(x,1)p}{p}+\ip{b(x,1)}{p}, \dots, \ip{a(x,J)p}{p}+\ip{b(x,J)}{p}\right] + R_x,
		\]
		where $x,p \in \mathbb{R}^d$ and $R_x$ is the matrix
		\[
		\begin{pmatrix}
			-\sum_{j \neq 1} r(1,j,x)	&	r(1,2,x)	&	\dots	&	r(1,J,x)\\
			r(2,1,x)	&	-\sum_{j \neq 2} r(2,j,x)	&	\dots	&	\vdots\\
			\vdots	&	\vdots	&	\ddots	&	\vdots \\
			r(J,1,x)	&	\dots	&	r(J,J-1,x)	&	-\sum_{j \neq J}r(J,j,x)
		\end{pmatrix},
		\]
		that is $(R_x)_{ii} = -\sum_{j \neq i} r(i,j,x)$ on the diagonal and $(R_x)_{ij} = r(i,j,x)$ for $i \neq j$.
	\end{remark}
	
	Next we consider Hamiltonians arising in the context of weakly interacting jump processes on a collection of states $\{1,\dots,q\}$ as described in our introduction. We analyze and motivate this example in more detail in our companion paper \cite{KrSchl20}. We give the terminology as needed for the results in this paper.
	
	\smallskip
	
	The empirical measure of the interacting processes takes its values in the set of measures $\cP(\{1,\dots,q\})$. The dynamics arises from mass moving over the bonds $(a,b) \in \Gamma = \left\{(i,j) \in \{1,\dots,q\}^2 \, | \, i \neq j\right\}$. As the number of processes is send to infinity, there is a type of limiting result for the total mass moving over the bonds. 
	
	\smallskip 
	
	We will denote by $v(a,b,\mu,\theta)$ the total mass that moves from $a$ to $b$ if the empirical measure equals $\mu$ and the control is given by $\theta$. We will make the following assumption on the kernel $v$.
	
	\begin{definition}[Proper kernel] \label{definition:proper_kernel}
		Let $v : \Gamma \times \cP(\{1,\dots,q\}) \times \Theta \rightarrow \bR^+$. We say that $v$ is a \textit{proper kernel} if $v$ is continuous and if for each $(a,b) \in \Gamma$, the map $(\mu,\theta) \mapsto v(a,b,\mu,\theta)$ is either identically equal to zero or satisfies the following two properties:
		\begin{enumerate}[(a)]
			\item $v(a,b,\mu,\theta) = 0$ if $\mu(a) = 0$ and $v(a,b,\mu,\theta) > 0$ for all $\mu$ such that $\mu(a) > 0$. 
			\item There exists a decomposition $v(a,b,\mu,\theta) = v_{\dagger}(a,b,\mu(a)) v_{\ddagger}(a,b,\mu,\theta)$ such that $v_{\dagger}$ is increasing in the third coordinate and such that $v_{\ddagger}(a,b,\cdot,\cdot)$ is continuous and satisfies $v_{\ddagger}(a,b,\mu,\theta) > 0$.
		\end{enumerate}
	\end{definition}
	A typical example of a proper kernel is given by 
	\begin{equation*}
		v(a,b,\mu,\theta) = \mu(a) r(a,b,\theta) e^{ \partial_a V(\mu) - \partial_b V(\mu)}, 
	\end{equation*}
	with $r > 0$ continuous and $V \in C^1_b(\cP(\{1,\dots,q\}))$.
	\begin{proposition}[Mean-field coupled to diffusion]\label{prop:mean-field-coupled-to-diffusion}
		Let the space $E$ be given by the embedding of $E:=\mathcal{P}(\{1,\dots,J\})\times[0,\infty)^\Gamma\subseteq \mathbb{R}^d$ and $F$ be a smooth compact Riemannian manifold without boundary. Suppose the following. 
		\begin{enumerate}[label=(\roman*)]
			\item The set of control variables $\Theta$ equals $\mathcal{P}(F)$.
			\item The function $\Lambda$ is given by
			\begin{equation*} 
				\Lambda((\mu,w),p,\theta) = \sum_{(a,b) \in \Gamma} v(a,b,\mu,\theta)\left[\exp\left\{p_b - p _a + p_{(a,b)} \right\} - 1 \right]
			\end{equation*}
			with a proper kernel $v$ in the sense of Definition~\ref{definition:proper_kernel}.
			\item The cost function $\mathcal{I}:E\times\Theta\to[0,\infty]$ is given by
			\begin{equation*}
				\mathcal{I}(x,\theta) := \sup_{\substack{u\in \mathcal{D}(L_x)\\ \inf u > 0}}\left[ -\int_F \frac{L_x u}{u}\,d\theta\right],
			\end{equation*}
			where $L_x$ is a second-order elliptic operator locally of the form
			\begin{equation*}
				L_x = \frac{1}{2}\nabla\cdot\left(a_x \nabla\right) + b_x\cdot \nabla,
			\end{equation*}
			on the domain $\mathcal{D}(L_x):=C^2(F)$, with positive-definite matrix $a_x$ and co-vectors $b_x$.
		\end{enumerate}
		Suppose that the cost function~$\mathcal{I}$ satisfies the assumptions of Proposition~\ref{prop:verify:DV-functional-of-drift-diffusion} and the function~$\Lambda$ satisfies the assumptions of Proposition~\ref{prop:verify-ex:Lambda_exponential}. Then Theorems~\ref{theorem:comparison_principle_variational} and~\ref{theorem:existence_of_viscosity_solution} apply to the Hamiltonian~\eqref{eq:results:H-example-section}. 
	\end{proposition}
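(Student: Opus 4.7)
The plan is to follow the pattern of the proof of Proposition~\ref{prop:diffusion-coupled-to-jumps}: verify Assumptions~\ref{assumption:results:regularity_of_V}, \ref{assumption:results:regularity_I} and~\ref{assumption:Hamiltonian_vector_field} and then invoke Theorems~\ref{theorem:comparison_principle_variational} and~\ref{theorem:existence_of_viscosity_solution}. Assumption~\ref{assumption:results:regularity_of_V} is handed to us by Proposition~\ref{prop:verify-ex:Lambda_exponential}, once we observe that the controlled-growth and continuity-estimate hypotheses on~$\Lambda$ are stated in the very form required there. Similarly, Assumption~\ref{assumption:results:regularity_I} follows from Proposition~\ref{prop:verify:DV-functional-of-drift-diffusion}, which packages the Donsker--Varadhan functional analysis of~$\cI$ into lower semi-continuity, existence of a zero control $\theta^0_x$ (an invariant measure of $L_x$), compact sub-level sets, the local comparability \ref{item:assumption:I:finiteness}, and the equi-continuity~\ref{item:assumption:I:equi-cont}.

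The only piece that is not directly imported is Assumption~\ref{assumption:Hamiltonian_vector_field}. The set $E=\cP(\{1,\dots,q\})\times [0,\infty)^{\Gamma}$ is closed and convex as an intersection of a simplex with a positive orthant, so only the tangent-cone inclusion $\partial_p\Lambda(x,p,\theta)\subseteq T_E(x)$ needs to be checked. Writing $x=(\mu,w)$ and $p=(p_\mu,p_w)$, the map $p\mapsto\Lambda(x,p,\theta)$ is smooth, so the subdifferential is the singleton consisting of the gradient, whose components are
\begin{align*}
\frac{\partial\Lambda}{\partial p_a}(x,p,\theta) &= \sum_{b:(b,a)\in\Gamma} v(b,a,\mu,\theta)\, e^{p_a-p_b+p_{(b,a)}} - \sum_{b:(a,b)\in\Gamma} v(a,b,\mu,\theta)\, e^{p_b-p_a+p_{(a,b)}},\\
\frac{\partial\Lambda}{\partial p_{(a,b)}}(x,p,\theta) &= v(a,b,\mu,\theta)\, e^{p_b-p_a+p_{(a,b)}}.
\end{align*}
The characterization of $T_E(x)$ at $x=(\mu,w)$ is: the tangent directions $(\xi_\mu,\xi_w)$ satisfy $\sum_a(\xi_\mu)_a=0$; $(\xi_\mu)_a\geq 0$ whenever $\mu(a)=0$; and $(\xi_w)_{(a,b)}\geq 0$ whenever $w_{(a,b)}=0$.

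Thus the verification reduces to three local checks which I expect to be the main (and only nontrivial) obstacle, though each one is short. The mass-conservation identity $\sum_a \partial_{p_a}\Lambda=0$ follows by re-indexing the two sums over~$\Gamma$. The boundary inequality for the flux components is immediate since $v\geq 0$ and the exponential is positive. The delicate step is the simplex-boundary inequality: when $\mu(a)=0$, we must show $\partial_{p_a}\Lambda\geq 0$, and here we invoke Definition~\ref{definition:proper_kernel}(a), which forces $v(a,b,\mu,\theta)=0$ for every $b$ whenever $\mu(a)=0$; the negative term in $\partial_{p_a}\Lambda$ then vanishes and the remaining sum is non-negative. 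With these three checks Assumption~\ref{assumption:Hamiltonian_vector_field} is established, and the conclusion follows by applying Theorems~\ref{theorem:comparison_principle_variational} and~\ref{theorem:existence_of_viscosity_solution}.
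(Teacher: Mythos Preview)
Your proof is correct and follows the same route as the paper: delegate Assumption~\ref{assumption:results:regularity_of_V} to Proposition~\ref{prop:verify-ex:Lambda_exponential}, Assumption~\ref{assumption:results:regularity_I} to Proposition~\ref{prop:verify:DV-functional-of-drift-diffusion}, and verify Assumption~\ref{assumption:Hamiltonian_vector_field} by hand. The paper defers the tangent-cone inclusion to a separate Proposition~\ref{proposition:Ham_flow_for_exp_hamiltonian}, whose proof is only a sketch in the two-state case without flux; your argument is in fact more complete, carrying out the full computation of $\nabla_p\Lambda$, the characterization of $T_E(\mu,w)$, and the three checks (mass conservation via re-indexing, nonnegativity of the flux component, and vanishing of the outgoing term at $\mu(a)=0$ via Definition~\ref{definition:proper_kernel}(a)).
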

	\begin{proof}
		To apply Theorems \ref{theorem:comparison_principle_variational} and \ref{theorem:existence_of_viscosity_solution}, we need to verify Assumptions \ref{assumption:results:regularity_of_V}, \ref{assumption:results:regularity_I} and \ref{assumption:Hamiltonian_vector_field}.
		Assumption \ref{assumption:results:regularity_of_V} follows from Proposition~\ref{prop:verify-ex:Lambda_exponential} and Assumption \ref{assumption:results:regularity_I} follows from Proposition~\ref{prop:verify:DV-functional-of-drift-diffusion}. We verify Assumption \ref{assumption:Hamiltonian_vector_field} in Proposition \ref{proposition:Ham_flow_for_exp_hamiltonian}.
	\end{proof}
	\begin{remark}
		The cost function stems from occupation-time large deviations of a drift-diffusion process on a compact manifold, see e.g. \cite{DoVa75a,Pi07}. We expect Proposition~\ref{prop:mean-field-coupled-to-diffusion} to extend also to non-compact spaces $F$, but we feel this technical extension is better suited for a separate paper.
	\end{remark}

	\subsection{Verifying assumptions for cost functions \texorpdfstring{$\mathcal{I}$}{I}}
	\label{section:verify-ex:cost-functions}

	Here we verify Assumption~\ref{assumption:results:regularity_I} for two types of cost functions $\mathcal{I}(x,\theta)$ appearing in the  examples of  Propositions~\ref{prop:diffusion-coupled-to-jumps} and~\ref{prop:mean-field-coupled-to-diffusion}.
	
	\begin{proposition}[Donsker-Varadhan functional for jump processes]
		\label{prop:verify:DV-for-Jumps}
		Consider a finite set $F = \{1,\dots,J\}$ and let $\Theta := \mathcal{P}(\{1,\dots,J\})$ be the set of probability measures on $F$. For $x\in E$, let $L_x : C_b(F) \rightarrow C_b(F)$ be the operator given by
		\begin{equation*}
			L_x f(i) := \sum_{j=1}^Jr(i,j,x)\left[f(j)-f(i)\right],\quad f :\{1,\dots,J\}\to\mathbb{R}.
		\end{equation*}
		Suppose that the rates $r:\{1,\dots,J\}^2\times E \rightarrow \bR^+$ are continuous as a function on $E$ and moreover satisfy the following:
		\begin{enumerate}[label=(\roman*)]
			\item For any $x\in E$, the matrix $R(x)$ with entries $R(x)_{ij} := r(i,j,x)$ for $i\neq j$ and $R(x)_{ii} = -\sum_{j\neq i}r(i,j,x)$ is irreducible.
			\item For each pair $(i,j)$, we either have $r(i,j,\cdot)\equiv 0$ or for each compact set $K\subseteq E$, it holds that
			\begin{equation*}
				r_{K}(i,j) := \inf_{x\in K}r(i,j,x) > 0.
			\end{equation*}
		\end{enumerate}
		Then the Donsker-Varadhan functional $\mathcal{I}:E\times\Theta \rightarrow \bR^+$ defined by
		\begin{align*}
			\mathcal{I}(x,\theta) & :=  -\inf_{\substack{\phi \in C_b(F) \\ \inf \phi > 0}} \int \frac{L_x \phi(z)}{\phi(z)} \, \dd \theta \\
			& = \sup_{w\in\mathbb{R}^J}\sum_{ij}r(i,j,x)\theta_i \left[1-e^{w_j-w_i}\right]
		\end{align*}
		satisfies Assumption~\ref{assumption:results:regularity_I}.
	\end{proposition}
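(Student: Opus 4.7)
I will verify the five conditions~\ref{item:assumption:I:lsc}--\ref{item:assumption:I:equi-cont} of Assumption~\ref{assumption:results:regularity_I}. Throughout, write
\[
\phi_{x,\theta}(w) := \sum_{i,j} r(i,j,x)\theta_i\left(1-e^{w_j - w_i}\right),
\]
so that $\mathcal{I}(x,\theta)=\sup_{w\in\mathbb{R}^J}\phi_{x,\theta}(w)$. Condition~\ref{item:assumption:I:lsc} is immediate since $\phi_{x,\theta}(w)$ is jointly continuous in $(x,\theta)$ for each fixed $w$, and $\mathcal{I}$ is a supremum of such maps. For~\ref{item:assumption:I:zero-measure}, irreducibility of $R(x)$ provides a unique invariant probability $\pi_x$; setting $\theta_x^0:=\pi_x$, the bound $\mathcal{I}(x,\pi_x)\geq 0$ follows from $w=0$, while $\mathcal{I}(x,\pi_x)\leq 0$ follows from Jensen's inequality applied to the convex function $\exp$: letting $Z=\sum_{i\neq j}r(i,j,x)\pi_{x,i}$ and $a_{ij}=r(i,j,x)\pi_{x,i}/Z$, the invariance $\sum_i r(i,j,x)\pi_{x,i}=\pi_{x,j}\sum_k r(j,k,x)$ yields $\sum_{ij}a_{ij}(w_j-w_i)=0$, whence $\sum_{ij}a_{ij}e^{w_j-w_i}\geq 1$ and therefore $\phi_{x,\pi_x}(w)\leq 0$. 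Condition~\ref{item:assumption:I:compact-sublevelsets} is then automatic: $\Theta=\mathcal{P}(\{1,\dots,J\})$ is itself compact and $\Theta_{K,M}$ is closed by~\ref{item:assumption:I:lsc} together with compactness of $K$.

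For condition~\ref{item:assumption:I:finiteness}, the elementary bound $1-e^u\leq 1$ yields the \emph{a priori} estimate
\[
\mathcal{I}(y,\theta)\;\leq\;\sum_{i,j} r(i,j,y)\theta_i\;\leq\;\max_i\sum_j r(i,j,y),
\]
which is bounded uniformly for $y$ in any compact neighbourhood $U$ of $x$ by continuity of the rates. Hence~\ref{item:assumption:I:finiteness} holds trivially with $C_1'=C_2'=0$ and $M'$ equal to the uniform upper bound on $\bar U$.

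The main obstacle is the equi-continuity condition~\ref{item:assumption:I:equi-cont}. My strategy is to argue that the restriction $\mathcal{I}\colon K\times\Omega_{K,M}\to[0,\infty)$ is \emph{continuous}; equi-continuity in $y$ uniformly in $\theta\in\Omega_{K,M}$ then follows from uniform continuity on the compact set $K\times\Omega_{K,M}$. Since lower semi-continuity is already in hand, only upper semi-continuity is needed, and this reduces to showing that the supremum defining $\mathcal{I}(y,\theta)$ can be restricted, modulo the translation gauge $w\mapsto w+c\mathbf{1}$, to a compact set $W=W(K,M)\subset\mathbb{R}^J$ uniformly for $(y,\theta)\in K\times\Omega_{K,M}$. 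Granted such $W$, the family $\{\phi_{\cdot,\cdot}(w)\}_{w\in W}$ is equicontinuous in $(y,\theta)$ and the supremum is continuous.

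The compactness of near-optimal $w$'s will come from concavity of $\phi_{y,\theta}(\cdot)$ together with the uniform lower bounds $r_K(i,j)>0$ on active transitions from hypothesis~(ii) and irreducibility of $R(y)$: along any coordinate ray $w=te_k$, the term $-\sum_i r(i,k,y)\theta_i(e^t-1)$ drives $\phi_{y,\theta}(te_k)\to-\infty$ exponentially as $t\to+\infty$ whenever $\sum_i r(i,k,y)\theta_i>0$, and similarly $\sum_j r(k,j,y)\theta_k(1-e^{-t})$ handles $t\to-\infty$ whenever $\theta_k>0$. The delicate point, and the main technical difficulty, is handling $\theta$ on the boundary of the simplex where some $\theta_k=0$: here the constraint $\theta\in\Omega_{K,M}$ (rather than merely $\Theta_{K,M}$) must be used to rule out degenerate directions, exploiting that $\mathcal{I}(x,\theta)\leq M$ at \emph{every} $x\in K$ forces enough in-/out-rates to be active, via irreducibility, to constrain $w$ in every non-gauge direction.
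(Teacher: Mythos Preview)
Your treatment of \ref{item:assumption:I:lsc}--\ref{item:assumption:I:finiteness} agrees with the paper's (you use Jensen for \ref{item:assumption:I:zero-measure} where the paper uses the equivalent bound $1-e^u\le -u$; both amount to the same stationarity computation).

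For \ref{item:assumption:I:equi-cont} your proposed mechanism has a genuine gap. You aim to restrict the supremum, modulo the gauge $w\mapsto w+c\mathbf 1$, to a compact set $W$ by showing that $w\mapsto\phi_{y,\theta}(w)$ is coercive along non-gauge directions. This fails precisely at the boundary point you flag as delicate, and the restriction to $\Omega_{K,M}$ does not rescue it. Take $J=2$ with constant rates $r(1,2)=r(2,1)=1$ and $\theta=(1,0)$: then $\phi_{y,\theta}(w)=1-e^{w_2-w_1}$ is bounded above by $1=\mathcal I(y,\theta)$ but is not coercive in the non-gauge direction, approaching its supremum only as $w_2-w_1\to-\infty$. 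Since $\mathcal I(y,(1,0))=1$ for all $y$, we have $(1,0)\in\Omega_{K,1}$, so membership in $\Omega_{K,M}$ does not ``rule out degenerate directions''. As $\theta=(1-\delta,\delta)$ moves off the boundary, the optimizer $u^*(\delta)=\tfrac12\log((1-\delta)/\delta)$ escapes to $+\infty$, so no coercivity-based bound on near-optimizers can be uniform in $\theta$. A uniform compact $W$ may well exist, but it cannot be produced by the argument you outline.

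The paper bypasses this entirely by bounding not $w$ but the \emph{weighted} exponentials that actually appear in $\phi$. From any $\tfrac1n$-optimal $w^n$ for $\mathcal I(x,\theta)$ one reorganizes the inequality $\sum_{ij}r(i,j,x)\theta_i(1-e^{w^n_j-w^n_i})\ge 0$ to obtain, for each active bond $(a,b)$,
\[
\theta_a\, e^{w^n_b-w^n_a}\;\le\;\frac{1}{r_K(a,b)}\Bigl(\sum_{i\ne j}\bar r_{ij}+\tfrac1n\Bigr).
\]
This automatically absorbs the boundary case ($\theta_a=0$ makes the term vanish) and is exactly what is needed: plugging the same $w^n$ into $\mathcal I(y,\theta)$ and subtracting gives
\[
\mathcal I(x,\theta)-\mathcal I(y,\theta)\;\le\;\tfrac1n+\sum_{a\ne b}|r(a,b,x)-r(a,b,y)|\bigl(1+\theta_a e^{w^n_b-w^n_a}\bigr),
\]
which is uniformly small by uniform continuity of the rates on $K$. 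This direct estimate in fact yields equi-continuity over \emph{all} of $\Theta=\mathcal P(F)$, not only over $\Omega_{K,M}$.
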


	\begin{proof}[Proof]
		\underline{\ref{item:assumption:I:lsc}}: For a fixed vector $w\in\mathbb{R}^J$, the map
		\begin{equation*}
			(x,\theta)\mapsto \sum_{ij}r(i,j,x)\theta_i \left[1-e^{w_j-w_i}\right]
		\end{equation*}
		is continuous on $E\times\Theta$. Hence $\mathcal{I}(x,\theta)$ is lower semicontinuous as the supremum over continuous functions.
		\smallskip
		
		\underline{\ref{item:assumption:I:zero-measure}}: Let $x\in E$. First note that for all $\theta$, the choice $w = 0$ implies that $\cI(x,\theta) \geq 0$. By the irreducibility assumption on the rates $r(i,j,x)$, there exists a unique measure $\theta_{x}^0\in\Theta$ such that for any $f:\{1,\dots,J\}\to\mathbb{R}$,
		\begin{equation} \label{eqn:example_jump_DV_stationarity}
			\sum_i L_x f(i) \theta_{x}^0(i)=0.
		\end{equation}
		We establish that $\cI(x,\theta_{x}^0) = 0$. Let $w \in \mathbb{R}^J$. By the elementary estimate
		\begin{equation*}
			\left(1-e^{b - a}\right)\leq -(b-a) \quad \text{ for all } \; a,b > 0,
		\end{equation*}
		we obtain that
		\begin{align*}
			\sum_{ij}r(i,j,x) \theta_{x}^0(i) \left(1-e^{w_j - w_i}\right)
			&\leq  \sum_{ij}r(i,j,x) \theta_{x}^0(i) \left(w_j - w_i \right)\\
			&= \sum_i (L_x w)(i) \theta_{x}^0(i) = 0
		\end{align*}
		by \eqref{eqn:example_jump_DV_stationarity}. Since $\mathcal{I} \geq 0$, this implies $\mathcal{I}(x,\theta_{x}^0) = 0$.
		\smallskip
		
		\underline{\ref{item:assumption:I:compact-sublevelsets}}: Any closed subset of $\Theta$ is compact.
		\smallskip
		
		\underline{\ref{item:assumption:I:finiteness}}: Let $x_n\to x$ in $E$. It follows that the sequence is contained in some compact set $K \subseteq E$ that contains the $x_n$ and $x$ in its interior. For any $y\in K$,
		\begin{equation*}
			\mathcal{I}(y,\theta) \leq \sum_{ij, i \neq j} r(i,j,y) \theta_i \leq
			\sum_{ij, i\neq j} r(i,j,y) \leq
			\sum_{ij, i \neq j} \bar{r}_{ij}, \quad \bar{r}_{ij} := \sup_{y \in K} r(i,j,y).
		\end{equation*}
		Hence $\mathcal{I}$ is uniformly bounded on $K\times\Theta$, and~\ref{item:assumption:I:finiteness} follows with $U_x$ the interior of $K$. 
		\smallskip
		
		\underline{\ref{item:assumption:I:equi-cont}}: Let $d$ be some metric that metrizes the topology of $E$. We will prove that for any compact set $K\subseteq E$ and $\varepsilon > 0$ there is some $\delta > 0$ such that for all $x,y \in K$ with $d(x,y) \leq \delta$ and for all $\theta \in \mathcal{P}(F)$, we have
		\begin{equation} \label{eqn:proof_equi_cont_I}
			|\mathcal{I}(x,\theta) - \mathcal{I}(y,\theta)| \leq \varepsilon.
		\end{equation}
		Let $x,y \in K$. By continuity of the rates the $\mathcal{I}(x,\cdot)$ are uniformly bounded for $x \in K$:
		\begin{equation*}
			0 \leq \mathcal{I}(x,\theta) \leq \sum_{ij, i \neq j} r(i,j,x) \theta_i \leq
			\sum_{ij, i\neq j} r(i,j,x) \leq
			\sum_{ij, i \neq j} \bar{r}_{ij}, \quad \bar{r}_{ij} := \sup_{x \in K} r(i,j,x).
		\end{equation*}
		For any $n \in \mathbb{N}$, there exists $w^n \in \mathbb{R}^J$ such that
		\begin{equation*}
			0 \leq \mathcal{I}(x,\theta) \leq \sum_{ij, i \neq j} r_{ij}(x) \theta_i (1 - e^{w^n_j - w^n_i}) + \frac{1}{n}.
		\end{equation*}
		By reorganizing, we find for all bonds $(a,b)$ the bound
		\begin{equation*}
			\theta_a e^{w^n_b - w^n_a} \leq
			\frac{1}{r_{K,a,b}} \left[
			\sum_{ij, i \neq j} r(i,j,x)\theta_i + \frac{1}{n}
			\right]
			\leq \frac{1}{r_{K,a,b}} \left[
			\sum_{ij, i \neq j} \bar{r}_{ij} + \frac{1}{n}
			\right],
		\end{equation*}
		where~$r_{K,a,b}:=\inf_{x\in K}r(a,b,x)$.
		Thereby, evaluating in $\mathcal{I}(y,\theta)$ the same vector $w^n$ to estimate the supremum,
		\begin{align*}
			& \mathcal{I}(x,\theta) - \mathcal{I}(y,\theta) \\
			&\leq \frac{1}{n} + \sum_{ab, a\neq b} r(a,b,x) \theta_a (1 - e^{w^n_b - w^n_a}) - \sum_{ab, a\neq b} r(a,b,y) \theta_a (1 - e^{w^n_b - w^n_a})	\\
			&\leq \frac{1}{n} + \sum_{ab, a\neq b} |r(a,b,x) - r(a,b,y)| \theta_a + \sum_{ab, a\neq b} |r(a,b,y) - r(a,b,x)| \theta_a e^{w^n_b - w^n_a}	\\
			&\leq \frac{1}{n} + \sum_{ab, a\neq b}|r(a,b,x) - r(a,b,y)| \left(
			1 + \frac{1}{r_{K,a,b}} \left[\sum_{ij, i \neq j} \bar{r}_{ij} + 1 \right]
			\right).
		\end{align*}
		We take $n \to \infty$ and use that the rates $x \mapsto r(a,b,x)$ are continuous, and hence uniformly continuous on compact sets, to obtain \eqref{eqn:proof_equi_cont_I}.
	\end{proof}
	\begin{proposition}[Donsker-Varadhan functional for drift-diffusions]
		\label{prop:verify:DV-functional-of-drift-diffusion}
		Let $F$ be a smooth compact Riemannian manifold without boundary and set $\Theta:=\mathcal{P}(F)$, the set of probability measures on $F$. For $x\in E$, let $L_x : C^2(F) \subseteq C_b(F) \rightarrow C_b(F)$ be the second-order elliptic operator that in local coordinates is given by
		\begin{equation*}
			L_x = \frac{1}{2}\nabla\cdot\left(a_x \nabla\right) + b_x\cdot \nabla,
		\end{equation*}
		where $a_x$ is a positive definite matrix and $b_x$ is a vector field having smooth entries $a_x^{ij}$ and $b_x^i$ on $F$. Suppose that for all $i,j$ the maps 
		\begin{equation} \label{eqn:examples_DV_diffusion_continuous_coefficients}
			x \mapsto a_x^{i,j}(\cdot), \qquad x \mapsto b_x^i(\cdot)
		\end{equation}	
		are continuous as functions from $E$ to $C_b(F)$, where we equip $C_b(F)$ with the supremum norm. Then the functional $\mathcal{I}:E\times\Theta\to[0,\infty]$ defined by
		\begin{equation*}
			\mathcal{I}(x,\theta) := \sup_{\substack{u\in \mathcal{D}(L_x)\\u>0}}\left[ -\int_F \frac{L_xu}{u}\,d\theta\right]
		\end{equation*}
		satisfies Assumption~\ref{assumption:results:regularity_I}.
	\end{proposition}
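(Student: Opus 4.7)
The plan is to verify the five conditions \ref{item:assumption:I:lsc}--\ref{item:assumption:I:equi-cont} of Assumption~\ref{assumption:results:regularity_I}; the first three are direct, while \ref{item:assumption:I:finiteness} and \ref{item:assumption:I:equi-cont} will be the main obstacle. Throughout I will use the reformulation obtained by substituting $u=e^\psi$:
\begin{equation*}
\cI(x,\theta) = \sup_{\psi \in C^2(F)}\left[-\int_F L_x\psi\,d\theta - \tfrac{1}{2}\int_F \langle a_x\nabla\psi,\nabla\psi\rangle\,d\theta\right],
\end{equation*}
since $L_x e^\psi / e^\psi = L_x\psi + \tfrac{1}{2}\langle a_x\nabla\psi,\nabla\psi\rangle$.

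For \ref{item:assumption:I:lsc}, fix $u\in C^2(F)$ with $u>0$. Continuity of $(x,\theta)\mapsto -\int L_x u/u\,d\theta$ follows from two facts: continuity in $\theta$ holds since $L_x u/u$ is a fixed element of $C_b(F)$, and continuity in $x$ follows because $L_x u$ depends continuously on $x$ in the sup norm by hypothesis~\eqref{eqn:examples_DV_diffusion_continuous_coefficients}. Hence $\cI$ is lsc as a supremum of continuous maps. For \ref{item:assumption:I:compact-sublevelsets}, compactness of $F$ gives compactness of $\Theta=\mathcal{P}(F)$, so it suffices to show $\Theta_{K,M}$ is closed: this follows by taking $\theta_n\in\Theta_{K,M}$ with $\theta_n\to\theta$, extracting $x_n\to x\in K$ with $\cI(x_n,\theta_n)\leq M$, and invoking lower semicontinuity. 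For \ref{item:assumption:I:zero-measure}, take $\theta_x^0 = \mu_x$, the invariant probability measure of the uniformly elliptic operator $L_x$ on the compact manifold $F$, whose existence is standard. Substituting $u=e^\psi$ and using invariance $\int L_x\psi\,d\mu_x = 0$ gives $-\int L_x u/u\,d\mu_x = -\tfrac{1}{2}\int\langle a_x\nabla\psi,\nabla\psi\rangle\,d\mu_x\leq 0$, while $u\equiv 1$ yields $\cI(x,\mu_x)\geq 0$, hence equality.

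The main obstacle is \ref{item:assumption:I:finiteness} and \ref{item:assumption:I:equi-cont}, since $\cI$ can take the value $+\infty$ and the supremum runs over an infinite-dimensional family. The strategy is to compare the integrands for different base points using uniform ellipticity. Fix $x\in E$ and a compact neighbourhood $U$ of $x$. By continuity of $x\mapsto a_x,b_x$ into $C_b(F)$ together with uniform positive definiteness, one can find a constant $C_U\geq 1$ (shrinking $U$ if necessary) such that for all $y_1,y_2\in U$ the matrix inequality $a_{y_1}\leq C_U a_{y_2}$ holds pointwise on $F$. For any test function $\psi$ one then writes
\begin{equation*}
-L_{y_1}\psi - \tfrac{1}{2}\langle a_{y_1}\nabla\psi,\nabla\psi\rangle
= -L_{y_2}\psi - \tfrac{1}{2}\langle a_{y_2}\nabla\psi,\nabla\psi\rangle + R_{y_1,y_2}(\psi),
\end{equation*}
where the remainder $R_{y_1,y_2}(\psi)$ collects $(L_{y_2}-L_{y_1})\psi$ and the symmetric quadratic defect involving $a_{y_2}-a_{y_1}$. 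Using a Young-type estimate $t\cdot f - \tfrac{t^2}{2}\langle a_{y_2}\nabla\psi,\nabla\psi\rangle$ applied to each lower-order piece, one absorbs all first-order contributions into a constant multiple of $\tfrac{1}{2}\langle a_{y_2}\nabla\psi,\nabla\psi\rangle$ plus an additive error controlled by $\|b_{y_1}-b_{y_2}\|_\infty$ and $\|a_{y_1}-a_{y_2}\|_\infty$. After taking the supremum over $\psi$, this yields $\cI(y_1,\theta)\leq C_1' \cI(y_2,\theta) + C_2'$ uniformly over $\theta$ once $U$ is small enough, giving \ref{item:assumption:I:finiteness}. For \ref{item:assumption:I:equi-cont} the same mechanism applies, now using that the multiplicative constant can be taken arbitrarily close to $1$ and the additive constant arbitrarily small by further shrinking $U$, provided $\theta$ lies in a sublevel set $\Omega_{K,M}$ (so that the term $\tfrac{1}{2}\int\langle a_{y_2}\nabla\psi,\nabla\psi\rangle d\theta$ appearing as an absorber is uniformly bounded in terms of $M$). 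The main technical point is the absorption argument: without it, the elementary pointwise bounds on $a_y-a_x$ cannot be turned into estimates uniform over the unbounded family of test functions $\psi$.
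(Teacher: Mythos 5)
Your verification of \ref{item:assumption:I:lsc}, \ref{item:assumption:I:zero-measure} and \ref{item:assumption:I:compact-sublevelsets} is correct and essentially the paper's (for \ref{item:assumption:I:zero-measure} you substitute $u=e^\psi$ directly, which is legitimate here since $u>0$ on the compact manifold forces $\inf_F u>0$ and hence $\log u\in C^2(F)$; the paper routes the same computation through a Hille--Yosida approximation).

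The gap is in \ref{item:assumption:I:finiteness} and \ref{item:assumption:I:equi-cont}. In your decomposition the remainder $R_{y_1,y_2}(\psi)$ contains
\begin{equation*}
(L_{y_2}-L_{y_1})\psi=\tfrac{1}{2}\nabla\cdot\bigl((a_{y_2}-a_{y_1})\nabla\psi\bigr)+(b_{y_2}-b_{y_1})\cdot\nabla\psi ,
\end{equation*}
and the divergence term carries the second derivatives of $\psi$, contracted against $a_{y_2}-a_{y_1}$. These are not ``lower-order pieces'': $\nabla^2\psi$ is not dominated by $|\nabla\psi|^2$ plus a constant (rescale $\psi\mapsto\varepsilon\,\psi(\cdot/\varepsilon^2)$ in a chart), so no Young-type inequality can absorb them into $\tfrac12\int\langle a_{y_2}\nabla\psi,\nabla\psi\rangle\,d\theta$ uniformly over the test functions, and certainly not uniformly over $\theta$ (for $\theta$ a Dirac mass this second-order contribution alone already drives $\cI$ to $+\infty$). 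The absorption you have in mind only becomes available after integrating the divergence by parts against a density of $\theta$: one must first know that $\cI(x,\theta)\le M$ forces $\theta=g_\theta^2\,dz$ with $g_\theta\in H^1(F)$, and then the correct absorber is the Dirichlet energy $\int_F|\nabla g_\theta|^2\,dz$ of the density, not the $\psi$-dependent quadratic form. This identification is exactly what the paper imports from Pinsky: there are constants $c_1(y),\dots,c_4(y)$ depending continuously on $(a_y,a_y^{-1},b_y)$ such that $c_1(y)\int_F|\nabla g_\theta|^2\,dz-c_2(y)\le\cI(y,\theta)\le c_3(y)\int_F|\nabla g_\theta|^2\,dz+c_4(y)$; \ref{item:assumption:I:finiteness} then follows by making the constants uniform on a small neighbourhood of $x$, and \ref{item:assumption:I:equi-cont} is quoted from Theorem~2 of \cite{Pi07}. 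Without this (or an equivalent characterization of the Donsker--Varadhan functional), your argument for the last two items does not go through.
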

	\begin{proof}[Proof]
		\underline{\ref{item:assumption:I:lsc}}: For any fixed function $u\in\mathcal{D}(L_x)$ such that $u > 0$, the function $(-L_xu/u)$ is continuous on $F$. Note that by definition of $\cI$ it suffices to only consider $u > 0$. Thus, for any such fixed $u > 0$ it follows by~\eqref{eqn:examples_DV_diffusion_continuous_coefficients} and compactness of $F$ that 
		\begin{equation*}
			(x,\theta)\mapsto -\int_F \frac{L_xu}{u}\,d\theta
		\end{equation*}
		is continuous on $E\times\Theta$. As a consequence $\mathcal{I}(x,\theta)$ is lower semicontinuous as the supremum over continuous functions.
		\smallskip
		
		\underline{\ref{item:assumption:I:zero-measure}}: Let $x\in E$. The stationary measure $\theta_{x}^0 \in\Theta$ satisfying
		\begin{equation}\label{eq:proof:verify-I-diffusion:I2}
			\int_F L_xg(z)\,\theta_{x}^0(\dd z) = 0\quad \text{for all}\;g\in \mathcal{D}(L_x)
		\end{equation}
		is the minimizer of $\mathcal{I}(x,\cdot)$, that is $\mathcal{I}(x,\theta_{x}^0) = 0$. This follows by considering the Hille-Yosida approximation $L_x^\varepsilon$ of $L_x$ and using the same argument (using $w = \log u$) as in Proposition~\ref{prop:verify:DV-for-Jumps} for these approximations. For any $u>0$ and $\varepsilon>0$, 
		\begin{align*}
			-\int_F \frac{L_xu}{u}\,d\theta & = -\int_F \frac{L^\varepsilon_xu}{u}\,d\theta + \int_F \frac{(L^\varepsilon_x-L_x)u}{u}\,d\theta\\
			&\leq -\int_F \frac{L^\varepsilon_xu}{u}\,d\theta + \frac{1}{\inf_F u} \|(L_x^\varepsilon-L_x)u\|_F\\
			&\leq -\int_F L^\varepsilon_x \log(u)\,d\theta + o(1).
		\end{align*}
		Sending $\varepsilon\to 0$ and then using~\eqref{eq:proof:verify-I-diffusion:I2} gives~\ref{item:assumption:I:zero-measure}.
		\smallskip
		
		\underline{\ref{item:assumption:I:compact-sublevelsets}}: Since $\Theta = \mathcal{P}(F)$ is compact, any closed subset of $\Theta$ is compact. Hence any union of sub-level sets of $\mathcal{I}(x,\cdot)$ is relatively compact in~$\Theta$.
		\smallskip

		\underline{\ref{item:assumption:I:finiteness}}:
		Fix $x \in E$ and $M \geq 0$. Let $\theta \in \Theta_{\{x\},M}$. As $\cI(x,\theta) \leq M$, we find by \cite{Pi85a} that the density $\frac{\dd \theta}{\dd z}$ exists, where $\dd z$ denotes the Riemannian volume measure. 
		
		In addition it follows from \cite[Theorem 1.4]{Pi85a} there exists constants $c_1(y),c_2(y),c_3(y),c_4(y)$, $c_1(y),c_2(y)$ being positive, depending continuously on $a_y, a_y^{-1},b_y$ (See the derivation of~\cite[Eq.~(2.18),~(2.19)]{Pi85b}), but not on $\theta$, such that
		\begin{equation} \label{eqn:Pinksy_bootstrap_init}
			c_1(y)\int_F|\nabla g_\theta |^2\,dz - c_2(y) \leq \mathcal{I}(y,\theta) \leq c_3(y) \int_F|\nabla g_\theta |^2\,dz + c_4(y),
		\end{equation}
		where $g_\theta = (\dd \theta/\dd z)^{1/2}$.
		
		As the dependence is continuous in $y$, we can find a open set $U \subseteq E$ of $x$ such that there are constants $c_1,c_2,c_3,c_4$, $c_1,c_3$ being positive, that do not depend on $\theta$, such that for any $y \in U$:
		\begin{equation} \label{eqn:Pinksy_bootstrap}
			c_1\int_F|\nabla g_\theta |^2\,dz - c_2 \leq \mathcal{I}(y,\theta) \leq c_3 \int_F|\nabla g_\theta |^2\,dz + c_4.
		\end{equation}
		From \eqref{eqn:Pinksy_bootstrap}, \ref{item:assumption:I:finiteness} immediately follows.
		
		\smallskip
		
		\underline{\ref{item:assumption:I:equi-cont}}: Since the coefficients $a_x$ and $b_x$ of the operator $L_x$ depend continuously on $x$, assumption~\ref{item:assumption:I:equi-cont} follows from Theorem~2 of~\cite{Pi07}.
	\end{proof}

	\subsection{Verifying assumptions for functions \texorpdfstring{$\Lambda$}{Lambda}}
	\label{section:verify-ex:functions-Lambda}
	
	In this section we verify Assumption~\ref{assumption:results:regularity_of_V} for two types of functions $\Lambda(x,p,\theta)$ appearing in the examples of   Propositions~\ref{prop:diffusion-coupled-to-jumps} and~\ref{prop:mean-field-coupled-to-diffusion}.
	\begin{proposition}[Quadratic function $\Lambda$]\label{prop:verify-ex:Lambda_quadratic}
		Let $E=\mathbb{R}^d$ and $\Theta=\mathcal{P}(F)$ for some compact Polish space $F$. Suppose that the function $\Lambda :E\times\mathbb{R}^d\times\Theta\to\mathbb{R}$ is given by
		\begin{equation*} 
			\Lambda(x,p,\theta) = \int_F\ip{a(x,z)p}{p}\,d\theta(z) + \int_F\ip{b(x,z)}{p}\,d\theta(z),
		\end{equation*}
		where $a:E\times F\to\mathbb{R}^{d\times d}$ and $b:E\times F\to\mathbb{R}^d$ are continuous. Suppose that for every compact set $K \subseteq \bR^d$,
		\begin{align*}
			a_{K,min} & := \inf_{x \in K, z \in F, |p|=1} \ip{a(x,z)p}{p} > 0, \\
			a_{K,max} & := \sup_{x \in K, z \in F, |p| = 1} \ip{a(x,z)p}{p} < \infty, \\
			b_{K,max} & := \sup_{x \in K, z \in F, |p|=1} |\ip{b(x,z)}{p}| < \infty.
		\end{align*}
		Furthermore, there exists a constant $L>0$ such that for all $x,y\in E$ and $z\in F$,
		\begin{equation*}
			\|a(x,z)-a(y,z)\| \leq L|x-y|,
		\end{equation*}
		and suppose that the functions $b$ are one-sided Lipschitz. Then Assumption~\ref{assumption:results:regularity_of_V} holds.
	\end{proposition}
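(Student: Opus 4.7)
The plan is to verify conditions \ref{item:assumption:slow_regularity:continuity}--\ref{item:assumption:slow_regularity:continuity_estimate} of Assumption \ref{assumption:results:regularity_of_V} one by one, leaving the continuity estimate \ref{item:assumption:slow_regularity:continuity_estimate} for last as the only truly non-trivial part. The first two conditions are immediate: joint continuity of $\Lambda$ on $E \times \mathbb{R}^d \times \mathcal{P}(F)$ follows from continuity of $a,b$ on $E\times F$, compactness of $F$, and the definition of the weak topology on $\mathcal{P}(F)$; convexity of $p\mapsto\Lambda(x,p,\theta)$ follows from positive-definiteness of the averaged matrix $\int a(x,\cdot)\,d\theta$, since the linear part $\langle \int b\,d\theta,p\rangle$ is affine; finally $\Lambda(x,0,\theta)=0$ by inspection.

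For the containment function \ref{item:assumption:slow_regularity:compact_containment} I would take $\Upsilon(x):=\log(1+|x|^2)$, whose sublevel sets are compact and whose gradient $\nabla\Upsilon(x) = 2x/(1+|x|^2)$ is uniformly bounded on $\mathbb{R}^d$. The quadratic part of $\Lambda(x,\nabla\Upsilon(x),\theta)$ is controlled by combining the linear growth of $\|a(x,\cdot)\|$ (from the Lipschitz hypothesis) with the $|x|^{-2}$ decay of $|\nabla\Upsilon(x)|^2$. For the linear part, decomposing $\langle b(x,z),x\rangle = \langle b(x,z)-b(0,z), x\rangle + \langle b(0,z), x\rangle$ and applying the one-sided Lipschitz estimate together with $\sup_z |b(0,z)| < \infty$ gives a bound independent of $(x,\theta)$, providing the required constant $c_\Upsilon$.

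For the controlled growth \ref{item:assumption:slow_regularity:controlled_growth} on a compact set $K \subseteq E$, I would use uniform ellipticity and Young's inequality applied to the linear part of $\Lambda$ to obtain the two bounds
\begin{equation*}
\Lambda(x,p,\theta_2) \geq \tfrac{a_{K,min}}{2}|p|^2 - \tfrac{b_{K,max}^2}{2 a_{K,min}}, \qquad \Lambda(x,p,\theta_1) \leq \bigl(a_{K,max} + \tfrac{a_{K,min}}{2}\bigr)|p|^2 + \tfrac{b_{K,max}^2}{2 a_{K,min}},
\end{equation*}
from which $\Lambda(x,p,\theta_1) \leq C_1 \Lambda(x,p,\theta_2) + C_2$ with $C_1 = (2 a_{K,max} + a_{K,min})/a_{K,min}$ and an explicit constant $C_2$; the constant $M$ in the formulation may simply be taken to be $0$.

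The main step is the continuity estimate \ref{item:assumption:slow_regularity:continuity_estimate}, which I would verify using the penalization $\Psi(x,y) = \tfrac{1}{2}|x-y|^2$. For this choice the two momenta coincide: $p_\alpha := p^1_{\varepsilon,\alpha} = p^2_{\varepsilon,\alpha} = \alpha(x_{\varepsilon,\alpha} - y_{\varepsilon,\alpha})$. The upper bound \eqref{eqn:control_on_Gbasic_sup} in \ref{item:def:continuity_estimate:3}, combined with the ellipticity lower bound on the compact set $K_\varepsilon$, gives a uniform-in-$\alpha$ control $\sup_\alpha |p_\alpha| < \infty$. Splitting the difference into its quadratic and linear parts yields
\begin{multline*}
\Lambda(x_{\varepsilon,\alpha}, p_\alpha, \theta_{\varepsilon,\alpha}) - \Lambda(y_{\varepsilon,\alpha}, p_\alpha, \theta_{\varepsilon,\alpha}) \\
= \int_F \langle (a(x_{\varepsilon,\alpha},z) - a(y_{\varepsilon,\alpha},z)) p_\alpha, p_\alpha \rangle \, d\theta_{\varepsilon,\alpha}(z) + \int_F \langle b(x_{\varepsilon,\alpha},z) - b(y_{\varepsilon,\alpha},z), p_\alpha \rangle \, d\theta_{\varepsilon,\alpha}(z).
\end{multline*}
The Lipschitz bound on $a$ controls the first summand by $L|x_{\varepsilon,\alpha} - y_{\varepsilon,\alpha}| \cdot |p_\alpha|^2 \to 0$, since $|p_\alpha|$ is bounded and $|x_{\varepsilon,\alpha} - y_{\varepsilon,\alpha}| \to 0$ by \ref{item:def:continuity_estimate:2}. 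The crucial observation for the second summand is that $p_\alpha$ is a positive scalar multiple of $x_{\varepsilon,\alpha} - y_{\varepsilon,\alpha}$, so the one-sided Lipschitz estimate for $b$ gives
\begin{equation*}
\int_F \langle b(x_{\varepsilon,\alpha},z) - b(y_{\varepsilon,\alpha},z), p_\alpha\rangle \, d\theta_{\varepsilon,\alpha}(z) \leq \alpha L |x_{\varepsilon,\alpha} - y_{\varepsilon,\alpha}|^2 = 2L\,\alpha\Psi(x_{\varepsilon,\alpha}, y_{\varepsilon,\alpha}) \to 0
\end{equation*}
again by \ref{item:def:continuity_estimate:2}. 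Hence the $\liminf$ of the difference is $\leq 0$, establishing the continuity estimate. The main obstacle—and the reason for the one-sided rather than full Lipschitz hypothesis on $b$—is precisely this second summand: a naive estimate would require $|b(x,z) - b(y,z)|\cdot|p_\alpha|$, which cannot go to zero when $|p_\alpha|$ is only bounded; aligning the direction of $p_\alpha$ with $x-y$ converts the linear-in-$|p_\alpha|$ term into the quadratic $\alpha|x-y|^2$ that vanishes by the penalization condition.
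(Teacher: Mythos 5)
Your proposal is correct, and for conditions \ref{item:assumption:slow_regularity:continuity}--\ref{item:assumption:slow_regularity:controlled_growth} it matches the paper's proof in substance (the paper also takes $\Upsilon(x)=\tfrac12\log(1+|x|^2)$ and derives \ref{item:assumption:slow_regularity:controlled_growth} from the ratio $a_{K,max}/a_{K,min}$ plus an absorption of the linear term; your Young-inequality version with explicit $C_1,C_2$ is a cleaner rendering of the same idea, and your decomposition $\ip{b(x,z)}{x}=\ip{b(x,z)-b(0,z)}{x}+\ip{b(0,z)}{x}$ for the containment bound is in fact more careful than the paper's, which silently uses linear growth of $|b(x,\cdot)|$). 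The genuine divergence is in \ref{item:assumption:slow_regularity:continuity_estimate}: the paper observes that uniform ellipticity makes $\Lambda$ uniformly coercive on compacts and then simply invokes Proposition~\ref{proposition:continuity_estimate_coercivity}, whose proof bounds $|p_{\varepsilon,\alpha}|$, extracts a convergent subsequence $(x,y,p,\theta)$ with $x=y$, and concludes by joint continuity of $\Lambda$ — the Lipschitz hypotheses on $a$ and $b$ play no role there. You instead verify the continuity estimate directly: coercivity still supplies $\sup_\alpha|p_\alpha|<\infty$, but you then split the difference into its quadratic and linear parts and use the Lipschitz bound on $a$ and the one-sided Lipschitz bound on $b$ (exploiting that $p_\alpha$ is a positive multiple of $x_{\varepsilon,\alpha}-y_{\varepsilon,\alpha}$, in the spirit of Proposition~\ref{proposition:continuity_estimate_lipschitz}) to get a quantitative $O(|x-y|\,|p_\alpha|^2)+O(\alpha\Psi)$ bound. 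Both routes are valid; the paper's is shorter given the auxiliary proposition and avoids regularity of the coefficients beyond continuity, while yours avoids the compactness/subsequence argument, gives an explicit modulus, and explains why the one-sided Lipschitz hypothesis on $b$ is the natural one to state.
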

	
	\begin{remark}
		The above proposition is slightly more general than what we consider in Proposition \ref{prop:diffusion-coupled-to-jumps}, as there we assume that $F = \{1,\dots,J\}$ is a finite set.
	\end{remark}
	
	\begin{proof}[Proof]
		\underline{\ref{item:assumption:slow_regularity:continuity}}: Let $(x,p)\in E\times\mathbb{R}^d$. 
		Continuity of $\Lambda$ is a consequence of the fact that
		\begin{equation*}
			\Lambda(x,p,\theta) = \int_F V(x,p,z)\,d\theta(z)
		\end{equation*}
		is the pairing of a continuous and bounded function $V(x,p,\cdot)$ with the measure $\theta\in\mathcal{P}(F)$.
		
		\smallskip
		
		\underline{\ref{item:assumption:slow_regularity:convexity}}: Let $x\in E$ and $\theta \in \mathcal{P}(F)$. Convexity of $p\mapsto \Lambda(x,p,\theta)$ follows since $a(x,z)$ is positive definite by assumption. If $p_0 = 0$, then evidently $\Lambda(x,p_0,\theta) = 0$.
		\smallskip
		
		\underline{\ref{item:assumption:slow_regularity:compact_containment}}: We show that the map $\Upsilon : E\to\mathbb{R}$ defined by
		\begin{equation*}
			\Upsilon(x) := \frac{1}{2}\log\left(1 + |x|^2\right)
		\end{equation*}
		is a containment function for $\Lambda$. For any $x\in E$ and $\theta\in\mathcal{P}(F)$, we have
		\begin{align*}
			\Lambda(x,\nabla\Upsilon(x),\theta) &= \int_F \ip{a(x,z)\nabla\Upsilon(x)}{\nabla\Upsilon(x)}\,d\theta(z) + \int_F\ip{b(x,z)}{\nabla\Upsilon(x)}\,d\theta(z)\\
			&\leq a_{\{x\},\text{max}} |\nabla\Upsilon(x)|^2 + b_{\{x\},\text{max}}|\nabla\Upsilon(x)|\\
			&\leq C (1+|x|) \frac{x^2}{(1+x^2)^2} + C(1+|x|) \frac{x}{(1+x^2)},
		\end{align*}
		and the boundedness condition follows with the constant 
		\begin{equation*}
			C_\Upsilon := C \,\sup_x (1+|x|) \left[\frac{x^2}{(1+x^2)^2} + \frac{x}{(1+x^2)} \right] <\infty.
		\end{equation*}
		\smallskip
		
		\underline{\ref{item:assumption:slow_regularity:controlled_growth}}: Let $K\subseteq E$ be compact. We have to show that there exist constants $M, C_1, C_2 \geq 0$  such that for all $x \in K$, $p \in \mathbb{R}^d$ and all $\theta_1,\theta_2 \in \mathcal{P}(F)$, we have
		\begin{equation} \label{eqn:growth_bound_general}
			\Lambda(x,p,\theta_1)
			\leq
			\max \left\{M, C_1 \Lambda(x,p,\theta_2) + C_2 \right\}.
		\end{equation}
		Fix $\theta_1,\theta_2 \in \mathcal{P}(F)$.	We have for $x \in K$
		\begin{equation*}
			\int \ip{a(x,z)p}{p} d\theta_1(z) \leq \frac{a_{K,max}}{a_{K,min}} \int \ip{a(x,z)p}{p} d\theta_2(z)
		\end{equation*}
		In addition, as $a_{K,min} > 0$ and $b_{K,max} < \infty$ we have for any $C > 0$ and sufficiently large $|p|$ that
		\begin{equation*}
			\int \ip{b(x,z)}{p} \,d\theta_1(z) - (C+1)\int \ip{b(x,z)}{p} \,d\theta_2(z) \leq C \int \ip{a(x,z)p}{p} \,d\theta_2(z)
		\end{equation*}
		Thus, for sufficiently large $|p|$ (depending on $C$) we have
		\begin{equation*}
			\Lambda(x,p,\theta_1) \leq (1+C) \Lambda(x,p,\theta_2).
		\end{equation*}
		Fix a $C =: C_1$ and denote the set of `large' $p$ by $S$. The map $(x,p,\theta) \mapsto \Lambda(x,p,\theta)$ is bounded on $K \times \times S^c\times \Theta$. Thus, we can find a constant $C_2$ such that \eqref{eqn:growth_bound_general} holds.
		
		\smallskip
		
		\underline{\ref{item:assumption:slow_regularity:continuity_estimate}}: By the assumption on $a(x,z)$, the function $\Lambda$ is uniformly coercive in the sense that for any compact set $K\subseteq E$, 
		\begin{equation*}
			\inf_{x\in K, \theta\in\Theta}\Lambda(x,p,\theta) \to \infty \quad \text{ as }\; |p|\to \infty,
		\end{equation*}
		and the continuity estimate follows by Proposition~\ref{proposition:continuity_estimate_coercivity}.
	\end{proof}
	We proceed with the example in which $\Lambda$ depends on $p$ through exponential functions (Proposition~\ref{prop:mean-field-coupled-to-diffusion}). Let $q \in \mathbb{N}$ be an integer and 
	\begin{equation*}
		\Gamma := \left\{(a,b)\, \middle| \,a,b\in\{1,\dots,q\}, \,a\neq b\right\}
	\end{equation*}
	be the set of oriented edges in $\{1,\dots,q\}$.
	\begin{proposition}[Exponential function $\Lambda$]\label{prop:verify-ex:Lambda_exponential}
		Let $E\subseteq \mathbb{R}^d$ be the embedding of $E = \cP(\{1,\dots,q\}) \times (\bR^+)^{|\Gamma|}$ and $\Theta$ be a topological space. Suppose that $\Lambda$ is given by
		\begin{equation*} 
			\Lambda((\mu,w),p,\theta) = \sum_{(a,b) \in \Gamma} v(a,b,\mu,\theta)\left[\exp\left\{p_b - p _a + p_{(a,b)} \right\} - 1 \right]
		\end{equation*}
		where $v$ is a proper kernel in the sense of Definition~\ref{definition:proper_kernel}. Suppose in addition that there is a constant $C > 0$ such that for all $(a,b) \in \Gamma$ such that $v(a,b, \cdot,\cdot) \neq 0$ we have
		\begin{equation}\label{eq:prop-verify-Lambda-exp:boundedness-kernel}
			\sup_{\mu} \sup_{\theta_1,\theta_2} \frac{v(a,b,\mu,\theta_1)}{v(a,b,\mu,\theta_2)} \leq C.
		\end{equation}
		Then $\Lambda$ satisfies Assumption~\ref{assumption:results:regularity_of_V}.
	\end{proposition}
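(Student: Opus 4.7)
The plan is to verify the five items \ref{item:assumption:slow_regularity:continuity}--\ref{item:assumption:slow_regularity:continuity_estimate} of Assumption~\ref{assumption:results:regularity_of_V} in turn. Items \ref{item:assumption:slow_regularity:continuity} and \ref{item:assumption:slow_regularity:convexity} are immediate: $\Lambda$ is continuous because the proper kernel $v$ and the exponential are continuous; convexity in $p$ and the vanishing at $p=0$ hold term by term, since $p\mapsto e^{p_b-p_a+p_{(a,b)}}-1$ is convex with value $0$ at $p=0$, weighted by the non-negative factor $v(a,b,\mu,\theta)$. A preliminary observation used throughout the remaining steps is that \eqref{eq:prop-verify-Lambda-exp:boundedness-kernel}, combined with continuity of $v$ on the compact set $\mathcal{P}(\{1,\dots,q\})$, yields a uniform bound $v_{\max}(a,b):=\sup_{\mu,\theta}v(a,b,\mu,\theta)<\infty$ for every bond: fix any reference $\theta^{\star}$, note that $\sup_{\mu}v(a,b,\mu,\theta^{\star})<\infty$ by compactness, and upgrade via \eqref{eq:prop-verify-Lambda-exp:boundedness-kernel}.

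For \ref{item:assumption:slow_regularity:compact_containment} I take
\[
\Upsilon(\mu,w):=\sum_{(a,b)\in\Gamma}\log\bigl(1+w_{(a,b)}\bigr),
\]
which is smooth on $E$ and has compact sublevel sets since $\mathcal{P}(\{1,\dots,q\})$ is already compact and $\log(1+\cdot)$ coerces $w$. At $p=\nabla\Upsilon(x)$ all components in the $\mu$-direction vanish and each component in the $w$-direction lies in $(0,1]$, so $e^{p_b-p_a+p_{(a,b)}}-1\leq e-1$, and the uniform bound on $v$ gives $c_{\Upsilon}:=(e-1)\sum_{(a,b)}v_{\max}(a,b)<\infty$ uniformly in $x$ and $\theta$.

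For the controlled growth \ref{item:assumption:slow_regularity:controlled_growth}, I partition the bonds by the sign of $[e^{p_b-p_a+p_{(a,b)}}-1]$ into $\Gamma_+(p)$ and $\Gamma_-(p)$. Dropping the non-positive $\Gamma_-(p)$-contribution from $\Lambda(x,p,\theta_1)$ and applying \eqref{eq:prop-verify-Lambda-exp:boundedness-kernel} on the $\Gamma_+(p)$-contribution yields
\[
\Lambda(x,p,\theta_1)\leq C\sum_{(a,b)\in\Gamma_+(p)}v(a,b,\mu,\theta_2)\bigl[e^{p_b-p_a+p_{(a,b)}}-1\bigr].
\]
Adding and subtracting the $\Gamma_-(p)$-contribution for $\theta_2$ rewrites this as $C\Lambda(x,p,\theta_2)-C\sum_{(a,b)\in\Gamma_-(p)}v(a,b,\mu,\theta_2)[e^{\cdot}-1]$, and since $[e^{\cdot}-1]\in[-1,0)$ on $\Gamma_-(p)$ the correction is bounded by $C\sum_{(a,b)}v_{\max}(a,b)$. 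Thus \ref{item:assumption:slow_regularity:controlled_growth} holds with $C_1=C$ and $C_2=C\sum_{(a,b)}v_{\max}(a,b)$.

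The main obstacle is the continuity estimate \ref{item:assumption:slow_regularity:continuity_estimate}, because $\Lambda$ depends on $p$ through exponentials that are a priori unbounded along a fundamental sequence. I propose to use the quadratic penalization $\Psi(x,y)=\tfrac{1}{2}|x-y|^2$, so that $p^1_{\varepsilon,\alpha}=p^2_{\varepsilon,\alpha}=:p_{\alpha}$ and the Hamiltonian difference reduces to
\[
\sum_{(a,b)}\bigl[v(a,b,\mu_\alpha,\theta_{\varepsilon,\alpha})-v(a,b,\nu_\alpha,\theta_{\varepsilon,\alpha})\bigr]\,\bigl[e^{p_{\alpha,b}-p_{\alpha,a}+p_{\alpha,(a,b)}}-1\bigr],
\]
with $x_{\varepsilon,\alpha}=(\mu_\alpha,w_\alpha)$ and $y_{\varepsilon,\alpha}=(\nu_\alpha,u_\alpha)$. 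Conditions \ref{item:def:continuity_estimate:1} and \ref{item:def:continuity_estimate:2} confine the controls to a compact $\widehat{K}_\varepsilon\subseteq\Theta$ and force $\mu_\alpha,\nu_\alpha$ to share a common limit, while \ref{item:def:continuity_estimate:3} provides a uniform bound on the exponential factors weighted by $v$. The delicate step is to match the vanishing coefficient differences against the possibly large exponentials; I intend to exploit the proper-kernel decomposition $v=v_{\dagger}(a,b,\mu(a))\,v_{\ddagger}(a,b,\mu,\theta)$, the equicontinuity of $v_{\ddagger}$ on the compact set $\mathcal{P}(\{1,\dots,q\})\times\widehat{K}_\varepsilon$, and the monotonicity of $v_{\dagger}$ in $\mu(a)$ to split each bond according to the sign of $\mu_\alpha(a)-\nu_\alpha(a)$ and absorb the exponential against $v_{\dagger}$ on the appropriate side, mimicking the exponential-Hamiltonian argument of \cite{Kr17}, in order to conclude the required $\liminf\leq 0$.
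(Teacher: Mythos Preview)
Your treatment of \ref{item:assumption:slow_regularity:continuity}--\ref{item:assumption:slow_regularity:controlled_growth} matches the paper's proof essentially line for line: the same containment function $\Upsilon(\mu,w)=\sum_{(a,b)}\log(1+w_{(a,b)})$ is used, and your verification of the controlled-growth bound via the ratio estimate \eqref{eq:prop-verify-Lambda-exp:boundedness-kernel} is the same idea as the paper's (the paper drops the $-1$ first instead of splitting into $\Gamma_\pm(p)$, but the outcome and constants are the same up to a harmless factor).

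The genuine gap is in \ref{item:assumption:slow_regularity:continuity_estimate}. You propose to work with the \emph{single} penalization $\Psi(x,y)=\tfrac12|x-y|^2$, whereas the paper invokes Proposition~\ref{proposition:continuity_estimate_directional_extended}, which establishes the continuity estimate in the \emph{extended} sense of Definition~\ref{def:fundamental_inequality_extended} with respect to the \emph{pair}
\[
\Psi_1(\mu,\hat\mu)=\tfrac12\sum_a\bigl((\hat\mu(a)-\mu(a))^+\bigr)^2,\qquad
\Psi_2(w,\hat w)=\tfrac12\sum_{(a,b)}(w_{(a,b)}-\hat w_{(a,b)})^2.
\]
The asymmetric positive-part structure of $\Psi_1$ is the whole point: it forces the $\mu$-momenta to have a definite sign, which is what makes the ``absorb the exponential against $v_\dagger$'' step go through. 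With your symmetric quadratic $\Psi$, the exponential argument is $p_b-p_a=\alpha\bigl[(\mu_\alpha(b)-\nu_\alpha(b))-(\mu_\alpha(a)-\nu_\alpha(a))\bigr]$, whose sign bears no direct relation to the sign of $\mu_\alpha(a)-\nu_\alpha(a)$ alone; the monotonicity of $v_\dagger(a,b,\cdot)$ in $\mu(a)$ therefore does not give you the cancellation you need bond by bond. The paper flags exactly this issue in Appendix~\ref{section:continuity_estimate_general}, noting that the squared distance ``is not natural'' for Hamiltonians arising from large deviations of jump processes and that this is the reason for introducing the two-penalization framework following \cite{Kr17}. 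Your sketch, which ends at ``I intend to exploit\dots'', does not address this obstruction; to close the argument you should switch to the pair $(\Psi_1,\Psi_2)$ and the extended continuity estimate.
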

	\begin{remark}
		Similar to the previous proposition, the assumptions on $\Lambda$ are satisfied when $\Theta = \mathcal{P}(F)$ for some Polish space $F$, we have $v(a,b,\mu,\theta) = \mu(a) \int r(a,b,\mu,z) \theta(\dd z)$, and there are constants $0 < r_{min} \leq r_{max} < \infty$ such that for all $(a,b) \in \Gamma$ such that $\sup_{\mu,z} r(a,b,\mu,z) > 0$, we have
		\begin{equation*}
			r_{min} \leq \inf_{z} \inf_{\mu} r(a,b,\mu,z) \leq \sup_{z} \sup_{\mu} r(a,b,\mu,z) \leq r_{max}.
		\end{equation*}
		Regarding~\eqref{eq:prop-verify-Lambda-exp:boundedness-kernel}, for $(a,b) \in \Gamma$ for which $v(a,b,\cdot,\cdot)$ is non-trivial, we have
		\begin{equation*}
			\frac{v(a,b,\mu,\theta_1)}{v(a,b,\mu,\theta_2)} = \frac{\int r(a,b,\mu,z) \theta_1(\dd z)}{\int r(a,b,\mu,z) \theta_2(\dd z)} \leq \frac{r_{max}}{r_{min}}.
		\end{equation*}
	\end{remark}
	\begin{proof}[Proof of Proposition~\ref{prop:verify-ex:Lambda_exponential}]
		\underline{\ref{item:assumption:slow_regularity:continuity}}: The function $\Lambda$ is continuous as the sum of continuous functions.
		
		\smallskip
		
		\underline{\ref{item:assumption:slow_regularity:convexity}}: Convexity of $\Lambda$ as a function of $p$ follows from the fact that $\Lambda$ is a finite sum of convex functions, and $\Lambda(x,0,\theta)=0$ is evident.
		\smallskip
		
		\underline{\ref{item:assumption:slow_regularity:compact_containment}}: The function $\Upsilon : E\to\mathbb{R}$ defined by
		\begin{equation*}
			\Upsilon(\mu,w) := \sum_{(a,b)\in\Gamma}\log\left[1 + w_{(a,b)}\right]
		\end{equation*}
		is a containment function for $\Lambda$. For a verification, see~\cite{Kr17}.
		
		\smallskip
		
		\underline{\ref{item:assumption:slow_regularity:controlled_growth}}: Note that	
		\begin{align*}
			\Lambda((\mu,w),\theta_1,p) & \leq \sum_{(a,b)\in \Gamma} v(a,b,\mu,\theta_1) e^{p_{a,b} + p_b - p_a} \\
			& \leq C \sum_{(a,b)\in \Gamma} v(a,b,\mu,\theta_2) e^{p_{a,b} + p_b - p_a}  \\
			& \leq C \sum_{(a,b)\in \Gamma} v(a,b,\mu,\theta_2) \left[e^{p_{a,b} + p_b - p_a} - 1 \right] + C_2 .
		\end{align*}
		Thus the estimate holds with $M = 0$, $C_1 = C$ and $C_2 = \sup_{\mu,\theta} \sum_{a,b} v(a,b,\mu,\theta)$.
		
		\smallskip
		
		\underline{\ref{item:assumption:slow_regularity:continuity_estimate}}: The continuity estimate is the content of Proposition~\ref{proposition:continuity_estimate_directional_extended} below.
	\end{proof}

	
	\subsection{Verifying the continuity estimate} \label{section:verification_of_continuity_estimate}
	
	With the exception of  the verification of the continuity estimate in Assumption \ref{assumption:results:regularity_of_V} the verification in Section \ref{section:verify-ex:functions-Lambda} is straightforward. On the other hand, the continuity estimate is an extension of the comparison principle, and is therefore more complex. We verify the continuity estimate in three contexts, which illustrates that the continuity estimate follows from essentially the same arguments as the standard comparison principle. We will do this for:
	\begin{itemize}
		\item Coercive Hamiltonians
		\item One-sided Lipschitz Hamiltonians
		\item Hamiltonians arising from large deviations of empirical measures.
	\end{itemize}
	This list is not meant to be an exhaustive list, but to illustrate that the continuity estimate is a sensible extension of the comparison principle, which is satisfied in a wide range of contexts. In what follows, $E\subseteq \mathbb{R}^d$ is a Polish subset and $\Theta$ a topological space.
	\begin{proposition}[Coercive $\Lambda$] \label{proposition:continuity_estimate_coercivity}
		Let $\Lambda : E \times \bR^d \times \Theta \rightarrow \bR$ be continuous and uniformly coercive: that is, for any compact $K \subseteq E$ we have
		\begin{equation*}
			\inf_{x \in K, \theta\in\Theta} \Lambda(x,p,\theta) \to \infty \quad \mathrm{as} \; |p| \to \infty.
		\end{equation*}
		Then the continuity estimate holds for $\Lambda$ with respect to any penalization function $\Psi$.
	\end{proposition}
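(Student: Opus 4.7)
The plan is to exploit the uniform coercivity of $\Lambda$ to localize the relevant momenta and then close the argument by continuity of $\Lambda$.

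I would fix $\varepsilon > 0$ and a fundamental collection $(x_{\varepsilon,\alpha}, y_{\varepsilon,\alpha}, \theta_{\varepsilon,\alpha})$ in the sense of Definition~\ref{def:results:continuity_estimate}, and set
\begin{equation*}
p^1_\alpha := \alpha (\nabla_x \Psi)(x_{\varepsilon,\alpha}, y_{\varepsilon,\alpha}), \qquad p^2_\alpha := -\alpha (\nabla_y \Psi)(x_{\varepsilon,\alpha}, y_{\varepsilon,\alpha}).
\end{equation*}
By \ref{item:def:continuity_estimate:1}, the variables $x_{\varepsilon,\alpha}, y_{\varepsilon,\alpha}$ stay in a compact set $K_\varepsilon \subseteq E$ and $\theta_{\varepsilon,\alpha}$ in a compact set $\widehat{K}_\varepsilon \subseteq \Theta$, while by \ref{item:def:continuity_estimate:2} every accumulation point of $(x_{\varepsilon,\alpha}, y_{\varepsilon,\alpha})$ is of the form $(z,z)$ on the diagonal. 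The upper bound in \ref{item:def:continuity_estimate:3} combined with the assumed uniform coercivity of $\Lambda$ over $K_\varepsilon \times \widehat{K}_\varepsilon$ then rules out $|p^2_\alpha|$ being unbounded, yielding $\sup_\alpha |p^2_\alpha| < \infty$.

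Next, I would extract a subsequence along which $x_{\varepsilon,\alpha}, y_{\varepsilon,\alpha} \to z$, $\theta_{\varepsilon,\alpha} \to \theta$ and $p^2_\alpha \to p^2$. To transfer boundedness to $p^1_\alpha$ and identify its limit, I would use the identity
\begin{equation*}
p^1_\alpha - p^2_\alpha \;=\; \alpha \, \big[(\nabla_x + \nabla_y)\Psi\big](x_{\varepsilon,\alpha}, y_{\varepsilon,\alpha}),
\end{equation*}
together with the observation that differentiating $\Psi(z,z) \equiv 0$ forces $(\nabla_x + \nabla_y)\Psi \equiv 0$ on the diagonal; by $C^1$ regularity of $\Psi$, this quantity vanishes in the limit. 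Combining this with $\alpha \Psi(x_{\varepsilon,\alpha}, y_{\varepsilon,\alpha}) \to 0$ and a Taylor-type expansion of $\Psi$ near $(z,z)$, one refines the subsequence to ensure $p^1_\alpha - p^2_\alpha \to 0$, hence $p^1_\alpha \to p^2$ and in particular $|p^1_\alpha|$ is bounded. Continuity of $\Lambda$ then gives
\begin{equation*}
\Lambda(x_{\varepsilon,\alpha}, p^1_\alpha, \theta_{\varepsilon,\alpha}) - \Lambda(y_{\varepsilon,\alpha}, p^2_\alpha, \theta_{\varepsilon,\alpha}) \;\longrightarrow\; \Lambda(z, p^2, \theta) - \Lambda(z, p^2, \theta) \;=\; 0,
\end{equation*}
which is even stronger than the desired \eqref{equation:Xi_negative_liminf}.

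The step I expect to require the most care is controlling $p^1_\alpha - p^2_\alpha \to 0$ using only the $C^1$ regularity of $\Psi$. For the canonical choice $\Psi(x,y) = |x-y|^2$ one has the exact identity $p^1_\alpha = p^2_\alpha$ and the argument is immediate; for a general $C^1$ penalization function, the quantitative decay rate of $(\nabla_x + \nabla_y)\Psi$ near the diagonal (encoded implicitly in the convergence $\alpha\Psi(x_{\varepsilon,\alpha},y_{\varepsilon,\alpha}) \to 0$) is what ultimately controls this discrepancy, and is the main technical point of the proof.
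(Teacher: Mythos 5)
Your core argument is the paper's: the upper bound \eqref{eqn:control_on_Gbasic_sup} together with uniform coercivity over the compact set $K_\varepsilon$ forces the momenta entering the second slot of $\Lambda$ to stay bounded; one then extracts a convergent subsequence, uses \ref{item:def:continuity_estimate:2} to see that the spatial limit point lies on the diagonal, and concludes by continuity of $\Lambda$ that the difference vanishes along that subsequence, which bounds the $\liminf$ by $0$. For the quadratic penalization $\Psi(x,y)=\tfrac12|x-y|^2$, where $p^1_\alpha=p^2_\alpha$ identically, your proof is complete and coincides with the paper's (the paper in fact only treats this choice of $\Psi$, despite the ``any penalization function'' in the statement).

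The gap is the step you yourself flag: the claim that $p^1_\alpha-p^2_\alpha=\alpha\bigl[(\nabla_x+\nabla_y)\Psi\bigr](x_{\varepsilon,\alpha},y_{\varepsilon,\alpha})\to 0$ for a general $C^1$ penalization function. Differentiating $\Psi(z,z)\equiv 0$ only gives that $(\nabla_x+\nabla_y)\Psi$ vanishes on the diagonal, hence is $o(1)$ along the sequence; multiplying by $\alpha\to\infty$ destroys this, and the only quantitative information available, $\alpha\Psi(x_{\varepsilon,\alpha},y_{\varepsilon,\alpha})\to 0$, does not control $\alpha\,|(\nabla_x+\nabla_y)\Psi|$. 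Concretely, in $E=\bR$ write $u=x-y$, $v=x+y$ and take $\Psi=\tfrac12 u^2+|u|^{3/2}\sin^2 v$, a valid $C^1$ penalization function; then $(\nabla_x+\nabla_y)\Psi=2|u|^{3/2}\sin(2v)$, and the choice $u_\alpha=\alpha^{-3/5}$, $\sin v_\alpha=\alpha^{-1/10}$ with $v_\alpha\downarrow 0$ gives $\alpha\Psi\to 0$ while $\alpha\bigl[(\nabla_x+\nabla_y)\Psi\bigr]\to 4$. So the implication you rely on is false, and no Taylor expansion is available at $C^1$ regularity. Without $p^1_\alpha\to p^2$ the cancellation by continuity of $\Lambda$ fails, and note that \ref{item:def:continuity_estimate:3} provides only a \emph{lower} bound at $(x_{\varepsilon,\alpha},p^1_\alpha)$, so coercivity cannot be used a second time to bound $|p^1_\alpha|$. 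The honest resolutions are either to restrict to $\Psi=\tfrac12|x-y|^2$ (as the paper silently does, and as suffices for its applications) or to impose an additional hypothesis such as $|(\nabla_x+\nabla_y)\Psi(x,y)|\leq C\,\Psi(x,y)$ near the diagonal, under which your argument does close.
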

	
	\begin{proof}
		Let $\Psi(x,y) = \tfrac{1}{2}(x-y)^2$. Let $(x_{\alpha,\varepsilon},y_{\alpha,\varepsilon},\theta_{\varepsilon,\alpha})$ be fundamental for $\Lambda$ with respect to $\Psi$. Set $p_{\alpha,\varepsilon} = \alpha(x_{\varepsilon,\alpha} - y_{\varepsilon,\alpha})$. By the upper bound~\eqref{eqn:control_on_Gbasic_sup}, we find that for sufficiently small $\varepsilon > 0$ there is some $\alpha(\varepsilon)$ such that
		\begin{equation*}
			\sup_{\alpha \geq \alpha(\varepsilon)} \Lambda\left(y_{\varepsilon,\alpha}, p_{\varepsilon,\alpha}, \theta_{\varepsilon,\alpha}\right) < \infty.
		\end{equation*}
		As the variables $y_{\alpha,\varepsilon}$ are contained in a compact set by property (C1) of fundamental collections of variables, the uniform coercivity implies that the momenta $p_{\varepsilon,\alpha}$ for $\alpha \geq \alpha(\varepsilon)$ remain in a bounded set. Thus, we can extract a subsequence $\alpha'$ such that $(x_{\varepsilon,\alpha'},y_{\varepsilon,\alpha'},p_{\varepsilon,\alpha'},\theta_{\varepsilon,\alpha'})$ converges to $(x,y,p,\theta)$ with $x = y$ due to property (C2) of fundamental collections of variables. By continuity of $\Lambda$ we find
		\begin{align*}
			& \liminf_{\alpha \rightarrow \infty} \Lambda\left(x_{\varepsilon,\alpha}, p_{\varepsilon,\alpha},\theta_{\varepsilon,\alpha}\right) - \Lambda\left(y_{\alpha,\varepsilon},p_{\varepsilon,\alpha},\theta_{\varepsilon,\alpha}\right) \\
			& \leq \lim_{\alpha'\rightarrow \infty} \Lambda\left(x_{\varepsilon,\alpha'}, p_{\varepsilon,\alpha'},\theta_{\varepsilon,\alpha'}\right) - \Lambda\left(y_{\varepsilon,\alpha'},p_{\varepsilon,\alpha'},\theta_{\varepsilon,\alpha'}\right) = 0
		\end{align*}
		establishing the continuity estimate.
	\end{proof}

	\begin{proposition}[One-sided Lipschitz $\Lambda$]  \label{proposition:continuity_estimate_lipschitz}
		Let $\Lambda : E \times \bR^d \times \Theta\rightarrow \bR$ satisfy
		\begin{equation} \label{eqn:one_sided_Lipschitz_G}
			\Lambda(x,\alpha(x-y),\theta) - \Lambda(y,\alpha(x-y),\theta) \leq  c(\theta) \omega(|x-y| + \alpha (x-y)^2)
		\end{equation}
		for some collection of constants $c(\theta)$ satisfying $\sup_\theta c(\theta) < \infty$ and a function $\omega : \bR^+ \rightarrow \bR^+$ satisfying $\lim_{\delta \downarrow 0} \omega(\delta) = 0$.
		
		Then the continuity estimate holds for $\Lambda$ with respect to $\Psi(x,y) = \tfrac{1}{2}(x-y)^2$.
	\end{proposition}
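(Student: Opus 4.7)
The plan is to verify the continuity estimate of Definition~\ref{def:results:continuity_estimate} by a direct computation that exploits the symmetry of the quadratic penalization. The key observation is that for $\Psi(x,y) = \tfrac{1}{2}|x-y|^2$ the two momenta appearing in the continuity estimate coincide:
\begin{equation*}
\alpha (\nabla \Psi(\cdot,y_{\varepsilon,\alpha}))(x_{\varepsilon,\alpha}) = \alpha(x_{\varepsilon,\alpha}-y_{\varepsilon,\alpha}) = -\alpha (\nabla \Psi(x_{\varepsilon,\alpha},\cdot))(y_{\varepsilon,\alpha}).
\end{equation*}
Writing $p_{\varepsilon,\alpha}$ for this common value, verifying the continuity estimate reduces to showing
\begin{equation*}
\liminf_{\alpha \to \infty}\bigl[\Lambda(x_{\varepsilon,\alpha},p_{\varepsilon,\alpha},\theta_{\varepsilon,\alpha}) - \Lambda(y_{\varepsilon,\alpha},p_{\varepsilon,\alpha},\theta_{\varepsilon,\alpha})\bigr] \leq 0.
\end{equation*}

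Next I would invoke the one-sided Lipschitz hypothesis~\eqref{eqn:one_sided_Lipschitz_G} with $x = x_{\varepsilon,\alpha}$, $y = y_{\varepsilon,\alpha}$ and $\theta = \theta_{\varepsilon,\alpha}$. This bounds the above difference pointwise in $\alpha$ by $c(\theta_{\varepsilon,\alpha})\,\omega\bigl(|x_{\varepsilon,\alpha}-y_{\varepsilon,\alpha}| + \alpha(x_{\varepsilon,\alpha}-y_{\varepsilon,\alpha})^2\bigr)$, so it suffices to show that this upper bound tends to $0$. Property~\ref{item:def:continuity_estimate:2} of fundamental collections gives $\alpha(x_{\varepsilon,\alpha}-y_{\varepsilon,\alpha})^2 = 2\alpha\Psi(x_{\varepsilon,\alpha},y_{\varepsilon,\alpha}) \to 0$, and since $\alpha \to \infty$ this forces $|x_{\varepsilon,\alpha}-y_{\varepsilon,\alpha}| \to 0$ as well. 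Combining the hypothesis $\lim_{\delta\downarrow 0}\omega(\delta)=0$ with the uniform bound $\sup_\theta c(\theta) < \infty$ finishes the argument.

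I do not anticipate any real obstacle: the quadratic penalization is tailor-made for the hypothesis~\eqref{eqn:one_sided_Lipschitz_G}, since both gradient momenta reduce to the single vector $\alpha(x_{\varepsilon,\alpha}-y_{\varepsilon,\alpha})$ that appears on the left-hand side of that inequality. The remaining work — the vanishing of the argument of $\omega$ — is immediate from the definition of a fundamental collection, and the uniformity in $\theta$ is precisely the condition $\sup_\theta c(\theta) < \infty$ imposed by the proposition.
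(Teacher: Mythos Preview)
Your proposal is correct and follows essentially the same approach as the paper: both identify the common momentum $p_{\varepsilon,\alpha} = \alpha(x_{\varepsilon,\alpha}-y_{\varepsilon,\alpha})$, apply the one-sided Lipschitz bound \eqref{eqn:one_sided_Lipschitz_G}, and then use property~\ref{item:def:continuity_estimate:2} of a fundamental collection together with $\sup_\theta c(\theta) < \infty$ and $\lim_{\delta \downarrow 0}\omega(\delta)=0$ to conclude. Your write-up is in fact slightly more explicit than the paper's in spelling out why the two gradient momenta coincide and why $|x_{\varepsilon,\alpha}-y_{\varepsilon,\alpha}| \to 0$.
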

	
	\begin{proof} [Proof]
		Let $\Psi(x,y) = \tfrac{1}{2}(x-y)^2$. Let $(x_{\alpha,\varepsilon},y_{\alpha,\varepsilon},\theta_{\varepsilon,\alpha})$ be fundamental for $\Lambda$ with respect to $\Psi$. Set $p_{\alpha,\varepsilon} = \alpha(x_{\varepsilon,\alpha} - y_{\varepsilon,\alpha})$. We find
		\begin{align*}
			& \liminf_{\alpha \rightarrow \infty} \Lambda\left(x_{\varepsilon,\alpha}, p_{\varepsilon,\alpha}, \theta_{\varepsilon,\alpha}\right) - \Lambda\left(y_{\alpha,\varepsilon},p_{\varepsilon,\alpha},\theta_{\varepsilon,\alpha}\right) \\
			& \leq \liminf_{\alpha\rightarrow \infty} c(\theta_{\varepsilon,\alpha}) \omega\left(|x_{\varepsilon,\alpha}-y_{\varepsilon,\alpha}| + \alpha (x_{\varepsilon,\alpha}-y_{\varepsilon,\alpha})^2\right)
		\end{align*}
		which equals $0$ as $\sup_\theta c(\theta) < \infty$, $\lim_{\delta \downarrow 0} \omega(\delta) = 0$ and property \ref{item:def:continuity_estimate:2}  of a fundamental collection of variables.
	\end{proof}

	For the empirical measure of a collection of independent processes one obtains maps $\Lambda$ that are neither uniformly coercive nor Lipschitz. Also in this context one can establish the continuity estimate. We treat a simple 1d case and then state a more general version for which we refer to \cite{Kr17}.

	\begin{proposition} \label{proposition:continuity_estimate_directional}
		Suppose that $E = [-1,1]$ and that $\Lambda(x,p,\theta)$ is given by
		\begin{equation*}
			\Lambda(x,p,\theta) = (1-x) c_+(\theta) \left[e^{p} -1\right] +  (1+x) c_-(\theta) \left[e^{-p} -1\right]
		\end{equation*}
		with $c_-,c_+$ non-negative functions of $\theta$.	Then the continuity estimate holds for $\Lambda$ with respect to $\Psi(x,y) = \tfrac{1}{2}(x-y)^2$.
	\end{proposition}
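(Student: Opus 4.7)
The plan is to show directly that the difference inside the $\liminf$ in~\eqref{equation:Xi_negative_liminf} is in fact pointwise non-positive for every $\alpha$, exploiting the one-dimensional affine dependence of $\Lambda$ on $x$ and the fact that for $\Psi(x,y)=\tfrac12(x-y)^2$ the two momenta coincide. Concretely, for a fundamental collection $(x_{\varepsilon,\alpha},y_{\varepsilon,\alpha},\theta_{\varepsilon,\alpha})$ with respect to $\Psi$, I would write $p_{\varepsilon,\alpha}:=\alpha\,\nabla_x\Psi(x_{\varepsilon,\alpha},y_{\varepsilon,\alpha})=\alpha(x_{\varepsilon,\alpha}-y_{\varepsilon,\alpha})$ and observe that $-\alpha\nabla_y\Psi(x_{\varepsilon,\alpha},y_{\varepsilon,\alpha})$ equals the same quantity, so that both arguments in~\eqref{equation:Xi_negative_liminf} are evaluated at the common momentum $p_{\varepsilon,\alpha}$.

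Next, I would carry out the elementary algebraic step: since the $x$-dependence of $\Lambda$ is affine, one computes
\begin{equation*}
\Lambda(x,p,\theta)-\Lambda(y,p,\theta)
=(x-y)\Big[-c_+(\theta)(e^{p}-1)+c_-(\theta)(e^{-p}-1)\Big].
\end{equation*}
Substituting $p=\alpha(x-y)$, the factor $(x-y)$ and the scalar $p$ share the same sign. Since $c_\pm(\theta)\ge 0$, both summands in the bracket carry the opposite sign of $(x-y)$: if $x\ge y$ then $e^{p}-1\ge 0$ and $e^{-p}-1\le 0$, hence the bracket is $\le 0$; if $x\le y$ the signs reverse and the bracket is $\ge 0$. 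In either case the product with $(x-y)$ is non-positive, so
\begin{equation*}
\Lambda(x_{\varepsilon,\alpha},p_{\varepsilon,\alpha},\theta_{\varepsilon,\alpha})
-\Lambda(y_{\varepsilon,\alpha},p_{\varepsilon,\alpha},\theta_{\varepsilon,\alpha})\le 0
\end{equation*}
for every $\varepsilon$ and every $\alpha$, which trivially yields the required $\liminf_{\alpha\to\infty}\le 0$.

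There is essentially no obstacle here: the containment $E=[-1,1]$ only enters to ensure that the coefficients $(1-x)$ and $(1+x)$ are non-negative, so that the sign structure of each summand in $\Lambda$ is preserved, and conditions \ref{item:def:continuity_estimate:1}--\ref{item:def:continuity_estimate:3} of the fundamental collection play no role in the argument at all. The proposition is thus a structural consequence of the convexity of $p\mapsto e^{\pm p}$ combined with the affine form of the coefficients, and can be seen as a one-dimensional instance of the more general directional estimate that will be invoked in Proposition~\ref{proposition:continuity_estimate_directional_extended}.
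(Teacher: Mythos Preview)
Your proposal is correct and follows essentially the same approach as the paper: compute the difference $\Lambda(x,p,\theta)-\Lambda(y,p,\theta)$ explicitly using the affine $x$-dependence, observe that the two momenta coincide for $\Psi(x,y)=\tfrac12(x-y)^2$, and then use the sign relation between $(x-y)$ and $p=\alpha(x-y)$ together with $c_\pm\ge 0$ to conclude that each summand is non-positive for every $\alpha$. One minor remark: the constraint $E=[-1,1]$ is in fact not used in the difference argument at all, since the coefficients $(1\pm x)$ cancel when forming $\Lambda(x,p,\theta)-\Lambda(y,p,\theta)$; it only serves to make $\Lambda$ itself well-behaved as a Hamiltonian on $E$.
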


	\begin{proof}[Proof]
		Let $\Psi(x,y) = \tfrac{1}{2}(x-y)^2$. Let $(x_{\alpha,\varepsilon},y_{\alpha,\varepsilon},\theta_{\varepsilon,\alpha})$ be fundamental for $\Lambda$ with respect to $\Psi$. Set $p_{\alpha,\varepsilon} = \alpha(x_{\varepsilon,\alpha} - y_{\varepsilon,\alpha})$.
		
		We have
		\begin{align*}
			& \Lambda\left(x_{\varepsilon,\alpha}, p_{\varepsilon,\alpha}, \theta_{\varepsilon,\alpha}\right) - \Lambda\left(y_{\alpha,\varepsilon},p_{\varepsilon,\alpha},\theta_{\varepsilon,\alpha}\right) \\
			& = \left(y_{\varepsilon,\alpha}-x_{\varepsilon,\alpha}\right) c_+(\theta_{\varepsilon,\alpha}) \left[e^{p_{\varepsilon,\alpha}} -1\right] + \left(x_{\varepsilon,\alpha}-y_{\varepsilon,\alpha}\right) c_-(\theta_{\varepsilon,\alpha}) \left[e^{-p_{\varepsilon,\alpha}} -1\right]
		\end{align*}
		Now note that $y_{\varepsilon,\alpha}-x_{\varepsilon,\alpha}$ is positive if and only if $e^{p_{\varepsilon,\alpha}} -1$ is negative so that the first term is bounded above by $0$. With a similar argument the second term is bounded above by $0$. Thus the continuity estimate is satisfied.
	\end{proof}

	\begin{proposition} \label{proposition:continuity_estimate_directional_extended}
		Suppose $E = \cP(\{1,\dots,q\} \times (\bR^+)^\Gamma$ and suppose that $\Lambda$ is given by
		\begin{equation*} 
			\Lambda((\mu,w),\theta,p) = \sum_{(a,b) \in \Gamma} v(a,b,\mu,\theta)\left[\exp\left\{p_b - p _a + p_{(a,b)} \right\} - 1 \right]
		\end{equation*}
		where $v$ is a proper kernel. Then the continuity estimate holds for $\Lambda$ with respect to penalization functions (see Section \ref{section:continuity_estimate_general})
		\begin{align*}
			\Psi_1(\mu,\hat{\mu}) & := \frac{1}{2} \sum_{a} ((\hat{\mu}(a) - \mu(a))^+)^2, \\
			\Psi_2(w,\hat{w}) & := \frac{1}{2} \sum_{(a,b) \in \Gamma} (w_{(a,b)} - \hat{w}_{(a,b)})^2.
		\end{align*}
		Here we denote $r^+ = r \vee 0$ for $r \in \bR$.
	\end{proposition}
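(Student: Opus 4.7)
The plan is to follow the strategy of \cite{Kr17} for jump-flux Hamiltonians, exploiting the proper kernel structure of $v$ together with the asymmetric form of $\Psi_1$.

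\textbf{Setup and momentum identification.} Fix $\varepsilon > 0$ and let $(x_\alpha, y_\alpha, \theta_\alpha)$ be fundamental for $\Lambda$ with respect to $(\Psi_1, \Psi_2)$, writing $x_\alpha = (\mu_\alpha, w_\alpha)$ and $y_\alpha = (\hat\mu_\alpha, \hat w_\alpha)$ (suppressing the $\varepsilon$ subscript). Set
\begin{equation*}
m_a := \alpha(\hat\mu_\alpha(a) - \mu_\alpha(a))^+, \qquad n_{(a,b)} := \alpha(w_\alpha(a,b) - \hat w_\alpha(a,b)).
\end{equation*}
A direct differentiation of $\Psi_1$ and $\Psi_2$ (noting that both depend only on the differences $\hat\mu - \mu$ and $w - \hat w$) yields the momenta $p^1_a = p^2_a = -m_a$ and $p^1_{(a,b)} = p^2_{(a,b)} = n_{(a,b)}$. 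Crucially, the exponents appearing in $\Lambda$ at $x_\alpha$ and $y_\alpha$ therefore coincide:
\begin{equation*}
\Delta_{ab} := p^i_b - p^i_a + p^i_{(a,b)} = m_a - m_b + n_{(a,b)}, \qquad i = 1, 2.
\end{equation*}

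\textbf{Reduction to bond-wise estimates.} With the common exponent,
\begin{equation*}
\Lambda(x_\alpha, p^1, \theta_\alpha) - \Lambda(y_\alpha, p^2, \theta_\alpha) = \sum_{(a,b) \in \Gamma} \bigl[v(a,b,\mu_\alpha,\theta_\alpha) - v(a,b,\hat\mu_\alpha,\theta_\alpha)\bigr] \bigl[e^{\Delta_{ab}} - 1\bigr].
\end{equation*}
The $-1$ contribution vanishes as $\alpha \to \infty$ by continuity of $v$ together with $(\mu_\alpha, \hat\mu_\alpha) \to (z,z)$, so the task reduces to controlling the exponential part. The upper bound~\eqref{eqn:control_on_Gbasic_sup} at $y_\alpha$, combined with the uniform boundedness of $\sum v$, yields the fundamental estimate $v(a,b,\hat\mu_\alpha,\theta_\alpha)\, e^{\Delta_{ab}} \leq C$ for every bond.

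\textbf{Bond-wise analysis via the proper kernel.} For each bond with $v \not\equiv 0$, write $v = v_\dagger(a,b,\mu(a))\, v_\ddagger(a,b,\mu,\theta)$ and split
\begin{equation*}
v(a,b,\mu,\theta) - v(a,b,\hat\mu,\theta) = [v_\dagger(\mu(a)) - v_\dagger(\hat\mu(a))]\, v_\ddagger(\mu,\theta) + v_\dagger(\hat\mu(a))\, [v_\ddagger(\mu,\theta) - v_\ddagger(\hat\mu,\theta)].
\end{equation*}
The second term multiplied by $e^{\Delta_{ab}}$ factors as $v(a,b,\hat\mu,\theta)\, e^{\Delta_{ab}}$ (uniformly bounded) times $[v_\ddagger(\mu,\theta) - v_\ddagger(\hat\mu,\theta)]/v_\ddagger(\hat\mu,\theta)$, which tends to $0$ by continuity and strict positivity of $v_\ddagger$ on compacts. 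For the first term, monotonicity of $v_\dagger$ in its third argument gives a case split: when $\mu_\alpha(a) \leq \hat\mu_\alpha(a)$, the $v_\dagger$-difference is nonpositive and the whole product is nonpositive; when $\mu_\alpha(a) > \hat\mu_\alpha(a) > 0$, the fundamental bound yields $e^{\Delta_{ab}} \leq C/v(a,b,\hat\mu_\alpha,\theta_\alpha)$, which remains bounded along the sequence, so the continuity of $v_\dagger$ closes the estimate.

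\textbf{Main obstacle.} The hardest case is $\hat\mu_\alpha(a) = 0 < \mu_\alpha(a)$: the fundamental bound degenerates since $v(a,b,\hat\mu_\alpha,\theta_\alpha) = 0$, while $\alpha\Psi_1 \to 0$ places no direct constraint on $\mu_\alpha(a)$ (because $(\hat\mu_\alpha(a) - \mu_\alpha(a))^+ = 0$ there). The key input is mass conservation: since $\mu_\alpha, \hat\mu_\alpha \in \mathcal{P}(\{1,\dots,q\})$, we have $\sum_{a'} (\hat\mu_\alpha(a') - \mu_\alpha(a'))^+ = \sum_{a'} (\mu_\alpha(a') - \hat\mu_\alpha(a'))^+ \geq \mu_\alpha(a)$, so the excess at $a$ is pinned by $m_{a'} > 0$ at some other site $a'$; combined with $m_{a'} = o(\sqrt{\alpha})$ coming from $\alpha\Psi_1 \to 0$, one obtains $\mu_\alpha(a) = o(\alpha^{-1/2})$ and hence $v_\dagger(a,b,\mu_\alpha(a)) \to v_\dagger(a,b,0) = 0$ by the proper kernel property. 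The residual exponential factor is then absorbed by matching against the fundamental bound at the reverse bond $(b,a)$, where $\hat\mu_\alpha(b) > 0$ by the same mass conservation, following the graph-Hamiltonian argument of \cite{Kr17}. This interplay between mass conservation, proper-kernel degeneration at $\mu(a) = 0$, and reverse-bond bounds is the main technical content of the proof.
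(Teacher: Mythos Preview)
Your overall approach coincides with the paper's, which simply points to Theorem~3.8 of \cite{Kr17} and notes that one additionally extracts convergent subsequences of the controls $\theta_{\varepsilon,\alpha}$. Your bond-wise decomposition, the proper-kernel splitting $v = v_\dagger v_\ddagger$, and the identification of the boundary case as the main obstacle are all on track.

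There is, however, a genuine gap. You collapse the two penalization scales $\alpha_1,\alpha_2$ into a single $\alpha$, and this is not just a notational shortcut: the entire reason the statement is formulated with the \emph{pair} $(\Psi_1,\Psi_2)$ and the nested limits of Definition~\ref{def:fundamental_inequality_extended} is that the single-scale argument does not close in the boundary case. Concretely, in your ``main obstacle'' case $\hat\mu_\alpha(a)=0<\mu_\alpha(a)$ you correctly get $m_a=0$ and $\mu_\alpha(a)=o(\alpha_1^{-1/2})$, hence $v_\dagger(a,b,\mu_\alpha(a))\to 0$; but with a single $\alpha$ the flux momentum $n_{(a,b)}=\alpha(w_\alpha-\hat w_\alpha)_{(a,b)}$ can be as large as $o(\sqrt{\alpha})$, and there is no a~priori reason for $v_\dagger(a,b,\mu_\alpha(a))\,e^{n_{(a,b)}}$ to vanish. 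The correct mechanism from \cite{Kr17} is the nested limit: for \emph{fixed} $\alpha_2$ the quantity $n_{(a,b)}$ is uniformly bounded, so $\Delta_{ab}=m_a-m_b+n_{(a,b)}\le n_{(a,b)}$ gives $e^{\Delta_{ab}}$ bounded, and then sending $\alpha_1\to\infty$ forces $\mu_\alpha\to\hat\mu_\alpha$ (via $\Psi_1\to 0$ plus mass conservation), killing the $v$-difference. Only afterwards does one send $\alpha_2\to\infty$.

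Your proposed fix via ``the reverse bond $(b,a)$'' does not work as stated: the fundamental upper bound at $y_\alpha$ controls $v(b,a,\hat\mu_\alpha,\theta_\alpha)\,e^{\Delta_{ba}}$, which involves $\Delta_{ba}=m_b-m_a+n_{(b,a)}$ and says nothing about $e^{\Delta_{ab}}$; moreover ``$\hat\mu_\alpha(b)>0$ by mass conservation'' is false for a specific $b$. Restore the two-scale structure and this step becomes straightforward.
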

	
	In this context, one can use coercivity like in Proposition \ref{proposition:continuity_estimate_coercivity} in combination with directional properties used in the proof of Proposition \ref{proposition:continuity_estimate_directional} above.
	
	To be more specific: the proof of this proposition can be carried out exactly as the proof of Theorem 3.8 of \cite{Kr17}: namely at any point a converging subsequence is constructed, the variables $\alpha$ need to be chosen such that we also get convergence of the measures $\theta_{\varepsilon,\alpha}$ in $\cP(F)$.
	
	\subsection{Verifying Assumption \ref{assumption:Hamiltonian_vector_field} for the exponential internal Hamiltonian}
	

	\begin{proposition}\label{proposition:Ham_flow_for_exp_hamiltonian}
		Let $\Lambda$ be as in Proposition \ref{prop:mean-field-coupled-to-diffusion}:
		\begin{equation*} 
			\Lambda((\mu,w),p,\theta) = \sum_{(a,b) \in \Gamma} v(a,b,\mu,\theta)\left[\exp\left\{p_b - p _a + p_{(a,b)} \right\} - 1 \right]
		\end{equation*}
		Then we have $\partial_p \Lambda((\mu,x),p) \subseteq T_E(\mu,w)$.
	\end{proposition}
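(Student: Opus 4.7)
The plan is to compute the gradient $\nabla_p \Lambda((\mu,w),p,\theta)$ explicitly---which coincides with the subdifferential $\partial_p \Lambda$ since $\Lambda$ is smooth in $p$---and verify coordinate-by-coordinate that it lies in $T_E(\mu,w)$. Writing $p = (p_c)_{c=1}^q \oplus (p_{(a,b)})_{(a,b)\in\Gamma}$ in accordance with the decomposition $E = \mathcal{P}(\{1,\dots,q\}) \times [0,\infty)^\Gamma$, a direct differentiation gives on the $\mu$-block the net inflow-minus-outflow expression
\begin{equation*}
\partial_{p_c} \Lambda = \sum_{a \neq c} v(a,c,\mu,\theta)\, e^{p_c - p_a + p_{(a,c)}} - \sum_{b \neq c} v(c,b,\mu,\theta)\, e^{p_b - p_c + p_{(c,b)}},
\end{equation*}
and on the $w$-block $\partial_{p_{(a,b)}} \Lambda = v(a,b,\mu,\theta)\, e^{p_b - p_a + p_{(a,b)}}$.

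Next I would identify $T_E(\mu,w)$. Since $E$ is a product of closed convex sets in $\mathbb{R}^q \times \mathbb{R}^\Gamma$, the tangent cone factors as $T_{\mathcal{P}(\{1,\dots,q\})}(\mu) \times T_{[0,\infty)^\Gamma}(w)$, where $T_{\mathcal{P}(\{1,\dots,q\})}(\mu)$ consists of vectors $\xi \in \mathbb{R}^q$ satisfying $\sum_c \xi_c = 0$ together with $\xi_c \geq 0$ whenever $\mu_c = 0$, and $T_{[0,\infty)^\Gamma}(w)$ consists of vectors $\eta$ with $\eta_{(a,b)} \geq 0$ whenever $w_{(a,b)} = 0$. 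This is a standard consequence of the definition of the Bouligand tangent cone applied to the simplex and to the positive orthant.

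The verification then reduces to three routine checks. First, $\sum_c \partial_{p_c} \Lambda = 0$ by relabelling: summing the incoming term over all $c$ amounts to summing $v(a,c,\mu,\theta)\, e^{p_c - p_a + p_{(a,c)}}$ over all $(a,c) \in \Gamma$, which is identical (after renaming indices) to summing the outgoing term $v(c,b,\mu,\theta)\, e^{p_b - p_c + p_{(c,b)}}$ over all $(c,b) \in \Gamma$. Second, each $w$-component $v(a,b,\mu,\theta)\, e^{p_b - p_a + p_{(a,b)}}$ is non-negative, hence automatically lies in $T_{[0,\infty)^\Gamma}(w)$ at any $w$. Third, at a boundary point with $\mu_c = 0$, part (a) of the proper-kernel condition (Definition~\ref{definition:proper_kernel}) forces $v(c,b,\mu,\theta) = 0$ for every $b \neq c$, so the outgoing sum vanishes identically and the remaining incoming sum is non-negative, giving $\partial_{p_c} \Lambda \geq 0$ as required.

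No substantial obstacle is anticipated; the definition of a proper kernel was tailored precisely to guarantee that the Hamiltonian vector field points tangentially on the simplex boundary, and the remaining coordinates behave automatically because $v \geq 0$. The only conceptual input beyond bookkeeping is the factorization of the Bouligand tangent cone of a product of closed convex sets as the product of tangent cones, which is classical.
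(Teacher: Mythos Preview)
Your proposal is correct and follows exactly the approach the paper sketches: compute $\nabla_p\Lambda$ explicitly, identify the tangent cone of the product $E=\mathcal{P}(\{1,\dots,q\})\times[0,\infty)^\Gamma$, and verify the sum-zero and boundary sign conditions using the proper-kernel property $v(c,b,\mu,\theta)=0$ when $\mu_c=0$. In fact your argument is the ``more tedious, but straightforward, computation'' that the paper alludes to after treating only the two-state case without fluxes.
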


	\begin{proof}[A sketch of the verification of Assumption~\ref{assumption:Hamiltonian_vector_field}]
		We sketch the proof in a simplified case, the general case being similar. Consider~$E=\mathcal{P}(\{a,b\})$ (ignoring the flux for the moment), and identify~$E$ with the simplex in~$\mathbb{R}^2$. Fix the control $\theta \in \Theta$. We have to show~$\partial_p \Lambda(\mu,p,\theta) \subseteq T_E(\mu)$. Recall that~$T_E(\mu)$ is the tangent cone at~$\mu$, that means the vectors at~$\mu$ pointing inside of~$E$. We compute the vector~$\nabla_p \Lambda(\mu,p,\theta) \in\mathbb{R}^2$,
		\begin{equation*}
			\nabla_p \Lambda(\mu,p,\theta) = \begin{pmatrix}
				-v(a,b,\mu,\theta) e^{p_b-p_a} + v(b,a,\mu,\theta) e^{p_a-p_b}\\
				v(a,b,\mu,\theta) e^{p_b-p_a} - v(b,a,\mu,\theta) e^{p_a-p_b}
			\end{pmatrix}.
		\end{equation*}
		For~$\mu=(\mu_a,\mu_b)\in E$ with~$\mu_a,\mu_b > 0$, the tangent cone~$T_E(\mu)$ is spanned by~$(1,-1)^T$. Since~$\nabla_p \Lambda(\mu,p,\theta)$ is orthogonal to~$(1,1)^T$, we indeed find that~$\partial_p \Lambda(\mu,p,\theta) \subseteq T_E(\mu)$ in that case. For~$\mu=(1,0)$, the tangent cone is~$T_E(1,0)=\{\lambda(-1,1)^T\,:\,\lambda \geq 0\}$. We have
		\begin{equation*}
			\nabla_p \Lambda(\mu,p,\theta) = \begin{pmatrix}
				- v(a,b,\mu,\theta) e^{p_b-p_a}\\
				v(a,b,\mu,\theta) e^{p_b-p_a}
			\end{pmatrix},
		\end{equation*}
		which is parallel to~$(-1,1)^T$, and therefore~$\partial_p \Lambda(\mu,p,\theta) \subseteq T_E(\mu)$. The argument is similar for~$\mu=(0,1)$. The general case (including the fluxes) follows from a more tedious, but straightforward, computation.
	\end{proof}

	\subsection*{Acknowledgment}
	MS acknowledges financial support through NWO grant 613.001.552.
	\appendix

	\section{Viscosity solutions} \label{section:appendix_viscosity_solutions}
	In Section \ref{section:comparison_principle} we work with a pair of Hamilton-Jacobi equations instead of a single Hamilton-Jacobi equation. To this end, we need to extend the notion of a viscosity solution and that of the comparison principle of Section \ref{section:preliminaries}.

	\begin{definition} \label{definition:appendix_pair_ofHJ}
		Let $A_1 \subseteq C(E) \times C(E)$ and $A_2 \subseteq C(E) \times C(E)$. Fix $\lambda > 0$ and $h_1,h_2 \in C_b(E)$. Consider the equations
		\begin{align} 
			f - \lambda A_1 f & = h_1, \label{eqn:differential_equation_A1} \\
			f - \lambda A_2 f & = h_2. \label{eqn:differential_equation_A2}
		\end{align}

		We say that $u$ is a \textit{(viscosity) subsolution} of equation \eqref{eqn:differential_equation_A1} if $u$ is bounded, upper semi-continuous and if for all $(f,g) \in A_1$ there exists a sequence $x_n \in E$ such that
		\begin{gather*}
			\lim_{n \uparrow \infty} u(x_n) - f(x_n)  = \sup_x u(x) - f(x), \\
			\lim_{n \uparrow \infty} u(x_n) - \lambda g(x_n) - h(x_n) \leq 0.
		\end{gather*}
		We say that $v$ is a \textit{(viscosity) supersolution} of equation \eqref{eqn:differential_equation_A2} if $v$ is bounded, lower semi-continuous and if for all $(f,g) \in A_2$ there exists a sequence $x_n \in E$ such that
		\begin{gather*}
			\lim_{n \uparrow \infty} v(x_n) - f(x_n)  = \inf_x v(x) - f(x), \\
			\lim_{n \uparrow \infty} v(x_n) - \lambda g(x_n) - h(x_n) \geq 0.
		\end{gather*}
		If $h_1 = h_2$, we say that $u$ is a \textit{(viscosity) solution} of equations \eqref{eqn:differential_equation_A1} and \eqref{eqn:differential_equation_A2} if it is both a subsolution to \eqref{eqn:differential_equation_A1} and a supersolution to \eqref{eqn:differential_equation_A2}.
		
		We say that \eqref{eqn:differential_equation_A1} and \eqref{eqn:differential_equation_A2} satisfy the \textit{comparison principle} if for every subsolution $u$ to \eqref{eqn:differential_equation_A1} and supersolution $v$ to \eqref{eqn:differential_equation_A2}, we have $\sup_E u-v \leq \sup_E h_1 - h_2$.
	\end{definition}
	As before, if test functions have compact levelsets, the existence of a sequences can be replaced by the existence of a point.

	\section{Regularity of the Hamiltonian}
	\label{section:regularity-of-H-and-L}

	In this section, we establish continuity, convexity and the existence of a containment function for the Hamiltonian $\cH$ of \eqref{eq:results:variational_hamiltonian}. We repeat its definition for convenience:
	\begin{equation} \label{eqn:regularity_section_H_variational_rep}
		\mathcal{H}(x,p) = \sup_{\theta \in \Theta}\left[\Lambda(x,p,\theta) - \mathcal{I}(x,\theta)\right].
	\end{equation}
	\begin{proposition}[Regularity of the Hamiltonian]\label{prop:reg-of-H-and-L:reg-H}
		Let $\mathcal{H} : E \times \mathbb{R}^d\to \mathbb{R}$ be the Hamiltonian as in \eqref{eqn:regularity_section_H_variational_rep}, and suppose that Assumptions~\ref{assumption:results:regularity_of_V} and~\ref{assumption:results:regularity_I} are satisfied. Then:
		\begin{enumerate}[label=(\roman*)]
			\item \label{item:prop:reg-of-H-and-L:convex} For any $x \in E$, the map $p \mapsto \mathcal{H}(x,p)$ is convex and $\mathcal{H}(x,0) = 0$.
			\item \label{item:prop:reg-of-H-and-L:compact-contain} With the containment function $\Upsilon : E \to \mathbb{R}$ of~\ref{item:assumption:slow_regularity:compact_containment}, we have
			\[
			\sup_{x \in E}\mathcal{H}(x,\nabla\Upsilon(x)) \leq C_\Upsilon < \infty.
			\]
		\end{enumerate}
	\end{proposition}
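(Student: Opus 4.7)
The plan is to deduce both parts directly from the assumptions on $\Lambda$ and $\mathcal{I}$, exploiting the fact that $\mathcal{H}$ is defined as a pointwise supremum of the maps $p \mapsto \Lambda(x,p,\theta) - \mathcal{I}(x,\theta)$, where the term $\mathcal{I}(x,\theta)$ is constant in $p$.

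For part~\ref{item:prop:reg-of-H-and-L:convex}, I first observe that for each fixed $x \in E$ and $\theta \in \Theta$, the map $p \mapsto \Lambda(x,p,\theta) - \mathcal{I}(x,\theta)$ is convex by Assumption~\ref{item:assumption:slow_regularity:convexity}, since subtraction of a $p$-independent constant preserves convexity. Convexity of $p \mapsto \mathcal{H}(x,p)$ then follows from the elementary fact that a pointwise supremum of convex functions is convex. For the normalization, Assumption~\ref{item:assumption:slow_regularity:convexity} also gives $\Lambda(x,0,\theta) = 0$ for every $\theta$, whence
\[
\mathcal{H}(x,0) \;=\; \sup_{\theta \in \Theta} \bigl[-\mathcal{I}(x,\theta)\bigr] \;=\; -\inf_{\theta \in \Theta} \mathcal{I}(x,\theta).
\]
Since $\mathcal{I}$ takes values in $[0,\infty]$ and Assumption~\ref{item:assumption:I:zero-measure} furnishes a control $\theta_x^0 \in \Theta$ with $\mathcal{I}(x,\theta_x^0) = 0$, this infimum equals zero, yielding $\mathcal{H}(x,0) = 0$.

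For part~\ref{item:prop:reg-of-H-and-L:compact-contain}, the plan is to push the bound on $\Lambda(\cdot,\nabla\Upsilon(\cdot),\theta)$ from the containment function definition through both suprema. For any $x \in E$ and any $\theta \in \Theta$, Definition~\ref{def:results:compact-containment} yields $\Lambda(x,\nabla\Upsilon(x),\theta) \leq C_\Upsilon$, uniformly in both $x$ and $\theta$. Combining this with $\mathcal{I}(x,\theta) \geq 0$ gives
\[
\Lambda(x,\nabla\Upsilon(x),\theta) - \mathcal{I}(x,\theta) \;\leq\; C_\Upsilon,
\]
and taking the supremum over $\theta$ followed by the supremum over $x$ produces the desired estimate.

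The argument is essentially immediate once the right assumptions are unpacked, so there is no serious obstacle to the proof. The one point requiring mild attention is confirming that $\mathcal{H}(x,0)$ equals zero rather than being negative, and this is precisely where Assumption~\ref{item:assumption:I:zero-measure} on the existence of a zero-cost control is indispensable; without such a control the infimum of $\mathcal{I}(x,\cdot)$ could a priori be strictly positive.
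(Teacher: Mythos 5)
Your proposal is correct and follows essentially the same route as the paper's own proof: convexity as a pointwise supremum of convex functions, $\mathcal{H}(x,0) = -\inf_\theta \mathcal{I}(x,\theta) = 0$ via $\Lambda(x,0,\theta)=0$, $\mathcal{I}\geq 0$, and the zero-cost control from \ref{item:assumption:I:zero-measure}, and part (ii) by discarding the nonnegative $\mathcal{I}$-term and invoking the uniform bound $\sup_\theta\sup_x\Lambda(x,\nabla\Upsilon(x),\theta)\leq C_\Upsilon$ from Definition~\ref{def:results:compact-containment}. No gaps.
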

	\begin{proof}[Proof]
		The map $p \mapsto \mathcal{H}(x,p)$ is convex as it is the supremum over convex (in $p$) functions.
		
		\smallskip
		
		For proving $\mathcal{H}(x,0) = 0$, let $x \in E$. Then by~\ref{item:assumption:slow_regularity:convexity} of Assumption~\ref{assumption:results:regularity_of_V}, we have $\Lambda(x,0,\theta) = 0$, and therefore
		\[
		\mathcal{H}(x,0) = - \inf_{\theta\in\Theta} \mathcal{I}(x,\theta) = 0,
		\]
		since $\cI \geq 0$ by Assumption \ref{assumption:results:regularity_I} and $\mathcal{I}(x,\theta_x^0)=0$ for some~$\theta_x^0$ by~\ref{item:assumption:I:zero-measure} of  Assumption~\ref{assumption:results:regularity_I}. Regarding~\ref{item:prop:reg-of-H-and-L:compact-contain}, we note that by~\ref{item:assumption:slow_regularity:compact_containment},
		\begin{align*}
			\mathcal{H}(x,\nabla \Upsilon(x)) \leq \sup_\theta \Lambda(x,\nabla \Upsilon(x),\theta) \leq \sup_{\theta\in\Theta}\sup_{x \in E} \Lambda(x,\nabla \Upsilon(x),\theta) \leq C_\Upsilon.
		\end{align*}
	\end{proof}
	To prove that $\cH$ is continuous, we use Assumption \ref{assumption:results:regularity_I}. What we truly need, however, is that $\cI$ Gamma converges as a function of $x$. We establish this result first.

	\begin{proposition}[Gamma convergence of the cost functions]\label{prop:Gamma-convergence-of-I}
		Let a cost function $\mathcal{I}:E\times\Theta\to[0,\infty]$ satisfy Assumption~\ref{assumption:results:regularity_I}. Then if $x_n\to x$ in $E$, the functionals $\mathcal{I}_n$ defined by
		\begin{equation*}
			\mathcal{I}_n(\theta) := \mathcal{I}(x_n,\theta)
		\end{equation*}
		converge in the $\Gamma$-sense to $\mathcal{I}_\infty(\theta) := \mathcal{I}(x,\theta)$. That is:
		\begin{enumerate}
			\item If $x_n \rightarrow x$ and $\theta_n \rightarrow \theta$, then $\liminf_{n\to\infty} \cI(x_n,\theta_n) \geq \cI(x,\theta)$,
			\item For $x_n \rightarrow x$ and all $\theta \in \Theta$ there are controls $\theta_n \in \Theta$ such that $\theta_n \rightarrow \theta$ and $\limsup_{n\to\infty} \cI(x_n,\theta_n) \leq \cI(x,\theta)$.
		\end{enumerate}
	\end{proposition}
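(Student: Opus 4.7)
The plan is to establish the two $\Gamma$-convergence inequalities separately. First, I would observe that the $\Gamma$-liminf inequality is immediate from the joint lower semi-continuity of $\mathcal{I}$ on $E \times \Theta$ provided by assumption \ref{item:assumption:I:lsc}: for any sequences $x_n \to x$ in $E$ and $\theta_n \to \theta$ in $\Theta$, lower semi-continuity directly yields $\liminf_n \mathcal{I}(x_n,\theta_n) \geq \mathcal{I}(x,\theta)$.

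For the recovery sequence I claim that the \emph{constant} choice $\theta_n := \theta$ suffices. This is trivial when $\mathcal{I}(x,\theta) = +\infty$. When $M := \mathcal{I}(x,\theta) < \infty$, the control $\theta$ lies in $\Theta_{\{x\},M}$, and I would invoke assumption \ref{item:assumption:I:finiteness}: setting $y_2 = x$ in the inequality therein, one obtains an open neighbourhood $U$ of $x$ and constants $M',C_1',C_2' \geq 0$ such that
\begin{equation*}
\mathcal{I}(y,\theta) \leq \max\{M', C_1' M + C_2'\} =: M''
\end{equation*}
for every $y \in U$. This is the critical bootstrap step, which converts a pointwise bound on $\mathcal{I}(\cdot,\theta)$ at the single point $x$ into a local uniform bound on a whole neighborhood.

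Next, since $E \subseteq \mathbb{R}^d$, I can select a compact neighbourhood $K \subseteq U$ of $x$, e.g.\ $K = \overline{B(x,r)} \cap E$ for sufficiently small $r > 0$. The bound on $U$ ensures that $\theta \in \Omega_{K,M''}$, so assumption \ref{item:assumption:I:equi-cont} applies: the family $\{\mathcal{I}(\cdot,\sigma)\}_{\sigma \in \Omega_{K,M''}}$ is equicontinuous on $K$, and in particular $y \mapsto \mathcal{I}(y,\theta)$ is continuous at $x$. Since $x_n \to x$ implies $x_n \in K$ eventually, we obtain $\mathcal{I}(x_n,\theta) \to \mathcal{I}(x,\theta)$, which gives the desired limsup bound with equality.

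The only non-routine part of the argument is the interplay of assumptions \ref{item:assumption:I:finiteness} and \ref{item:assumption:I:equi-cont}: the former is needed to certify that $\theta$ belongs to $\Omega_{K,M''}$, which is the hypothesis that triggers equicontinuity of the single function $\mathcal{I}(\cdot,\theta)$ at $x$. Without this local uniform bound, equicontinuity on a sublevel set determined only by the value at $x$ would not be available, and a genuine (non-constant) recovery sequence would presumably have to be constructed by hand.
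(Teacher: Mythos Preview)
Your proof is correct and follows exactly the paper's strategy: the $\Gamma$-liminf is immediate from \ref{item:assumption:I:lsc}, and for the $\Gamma$-limsup one uses the constant recovery sequence $\theta_n = \theta$, first invoking \ref{item:assumption:I:finiteness} to obtain a uniform local bound and then \ref{item:assumption:I:equi-cont} to conclude continuity of $\cI(\cdot,\theta)$ at $x$. One minor technical point: since $E$ is only assumed to be a Polish subset of $\bR^d$ (not necessarily closed), the set $\overline{B(x,r)} \cap E$ need not be compact; instead take $K := \{x\} \cup \{x_n : n \geq N\}$ for $N$ large enough that $x_n \in U$, which is compact as a convergent sequence together with its limit and still satisfies $\theta \in \Omega_{K,M''}$.
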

	\begin{proof}[Proof]
		Let $x_n\to x$. If $\theta_n\to \theta$, then by lower semicontinuity~\ref{item:assumption:I:lsc},
		\begin{equation*}
			\liminf_{n\to\infty}\mathcal{I}(x_n,\theta_n) \geq \mathcal{I}(x,\theta).
		\end{equation*}
		For the $\text{lim-sup}$ bound, let $\theta\in\Theta$. If $\mathcal{I}(x,\theta)=\infty$, there is nothing to prove. Thus suppose that $\mathcal{I}(x,\theta)$ is finite. Then by~\ref{item:assumption:I:finiteness}, there is a neighborhood $U_x$ of $x$ and a constant $M < \infty$ such that for any $y\in U_x$,
		\begin{equation*}
			\mathcal{I}(y,\theta) \leq M.
		\end{equation*} 
		Since $x_n\to x$, the $x_n$ are eventually contained in $U_x$. Taking the constant sequence $\theta_n:=\theta$, we thus get that $\mathcal{I}(x_n,\theta_n) \leq M$ for all $n$ large enough. By~\ref{item:assumption:I:equi-cont}, 
		\begin{equation*}
			\lim_{n\to\infty}|\mathcal{I}(x_n,\theta_n)-\mathcal{I}(x,\theta)| \leq 0,
		\end{equation*}
		and the $\text{lim-sup}$ bound follows.
	\end{proof}
	\begin{proposition}[Continuity of the Hamiltonian]\label{prop:reg-of-H-and-L:continuity}
		Let $\mathcal{H} : E \times \mathbb{R}^d\to \mathbb{R}$ be the Hamiltonian defined in~\eqref{eq:results:variational_hamiltonian}, and suppose that Assumptions~\ref{assumption:results:regularity_of_V} and~\ref{assumption:results:regularity_I} are satisfied. Then the map $(x,p) \mapsto \cH(x,p)$ is continuous and the Lagrangian $(x,v) \mapsto \cL(x,v) := \sup_{p} \ip{p}{v} - \mathcal{H}(x,p)$ is lower semi-continuous.
	\end{proposition}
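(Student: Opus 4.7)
The lower semi-continuity of $\mathcal{L}$ will follow from the continuity of $\mathcal{H}$: once we know $(x,p)\mapsto \mathcal{H}(x,p)$ is continuous, each map $(x,v)\mapsto \langle p,v\rangle - \mathcal{H}(x,p)$ is continuous and $\mathcal{L}$ is the pointwise supremum of these, hence lower semi-continuous. So the real work is to prove that $\mathcal{H}$ is continuous, which I will split into the two semi-continuity directions.

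For \emph{lower semi-continuity} of $\mathcal{H}$ at $(x_0,p_0)$, let $(x_n,p_n)\to(x_0,p_0)$. Fix any $\theta\in\Theta$. By Proposition~\ref{prop:Gamma-convergence-of-I} (the limsup direction of $\Gamma$-convergence of $\mathcal{I}(x_n,\cdot)$), there exist $\theta_n\to\theta$ with $\limsup_n \mathcal{I}(x_n,\theta_n)\leq \mathcal{I}(x_0,\theta)$. Continuity of $\Lambda$ from~\ref{item:assumption:slow_regularity:continuity} then gives
\[
\liminf_{n\to\infty}\mathcal{H}(x_n,p_n) \geq \liminf_{n\to\infty}\bigl[\Lambda(x_n,p_n,\theta_n)-\mathcal{I}(x_n,\theta_n)\bigr] \geq \Lambda(x_0,p_0,\theta)-\mathcal{I}(x_0,\theta),
\]
and taking the supremum over $\theta$ yields $\liminf_n \mathcal{H}(x_n,p_n)\geq \mathcal{H}(x_0,p_0)$.

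For \emph{upper semi-continuity} of $\mathcal{H}$ at $(x_0,p_0)$, pick $\theta_n$ that are $1/n$-optimal: $\mathcal{H}(x_n,p_n)\leq \Lambda(x_n,p_n,\theta_n)-\mathcal{I}(x_n,\theta_n)+\tfrac1n$. The main obstacle is to localize the $\theta_n$ in a compact subset of $\Theta$ so that continuity of $\Lambda$ and lower semi-continuity of $\mathcal{I}$ can be applied in the limit. This is where I use the bootstrap-style argument already employed in the proof of Lemma~\ref{lemma:Hamiltonian_vector_flow_from_Lambda}. Let $K:=\{x_n\}\cup\{x_0\}$, a compact subset of $E$, and for each $x\in K$ pick a control $\theta_x^0$ with $\mathcal{I}(x,\theta_x^0)=0$ from~\ref{item:assumption:I:zero-measure}. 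By~\ref{item:assumption:I:compact-sublevelsets} applied with $M=0$, the family $\{\theta_{x_n}^0\}_n$ lies in a compact $\widehat{K}\subseteq\Theta$, so $\sup_n \Lambda(x_n,p_n,\theta_{x_n}^0)<\infty$ by continuity of $\Lambda$ and compactness. Applying~\ref{item:assumption:slow_regularity:controlled_growth} on $K$ with $\theta_2=\theta_{x_n}^0$ gives constants $M,C_1,C_2$ with
\[
\Lambda(x_n,p_n,\theta) - \mathcal{I}(x_n,\theta) \leq \max\bigl\{M,\,C_1\Lambda(x_n,p_n,\theta_{x_n}^0)+C_2\bigr\} - \mathcal{I}(x_n,\theta)
\]
for every $\theta$, so $\mathcal{H}(x_n,p_n)$ is uniformly bounded above. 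Since also $\mathcal{H}(x_n,p_n)\geq \Lambda(x_n,p_n,\theta_{x_n}^0)$ is uniformly bounded below, rearranging the near-optimality inequality yields $\sup_n \mathcal{I}(x_n,\theta_n)<\infty$. Thus $\theta_n\in \Theta_{K,M'}$ for some $M'$, and by~\ref{item:assumption:I:compact-sublevelsets} we extract a subsequence $\theta_{n_k}\to\theta_\infty\in\Theta$.

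To conclude, along this subsequence continuity of $\Lambda$ and lower semi-continuity of $\mathcal{I}$ from~\ref{item:assumption:I:lsc} give
\[
\limsup_{k\to\infty}\mathcal{H}(x_{n_k},p_{n_k}) \leq \Lambda(x_0,p_0,\theta_\infty)-\mathcal{I}(x_0,\theta_\infty)\leq \mathcal{H}(x_0,p_0).
\]
Since every subsequence of $(x_n,p_n)$ admits such a further subsequence, $\limsup_n \mathcal{H}(x_n,p_n)\leq \mathcal{H}(x_0,p_0)$, completing the proof of continuity. The lower semi-continuity of $\mathcal{L}$ then follows as noted. The only delicate point is the compactness of near-optimizers in the upper semi-continuity step; the controlled-growth assumption~\ref{item:assumption:slow_regularity:controlled_growth} combined with the zero-cost controls from~\ref{item:assumption:I:zero-measure} and the equi-coercivity of $\mathcal{I}$ from~\ref{item:assumption:I:compact-sublevelsets} is exactly tailored for this.
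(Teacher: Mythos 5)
Your proposal is correct. The lower semi-continuity of $\cH$ (via the recovery sequences of Proposition~\ref{prop:Gamma-convergence-of-I} and joint continuity of $\Lambda$) and the lower semi-continuity of $\cL$ as a supremum of continuous functions coincide with the paper's argument. Where you genuinely diverge is in the upper semi-continuity of $\cH$: the paper packages the localization of optimizers into a compact-valued, upper hemi-continuous correspondence $\phi(x,p)=\{\theta \,:\, \cI(x,\theta)\leq 2\vn{\Lambda(x,p,\cdot)}_\Theta\}$ and then invokes the abstract maximum-type result Lemma~\ref{lemma:upper_semi_continuity_abstract} (Aliprantis--Border 17.30), whereas you run a direct sequential argument with $1/n$-optimal controls, using the zero-cost controls of \ref{item:assumption:I:zero-measure}, the controlled growth \ref{item:assumption:slow_regularity:controlled_growth}, and the equi-coercivity \ref{item:assumption:I:compact-sublevelsets} to force $\sup_n\cI(x_n,\theta_n)<\infty$ and extract a convergent subsequence. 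Both routes exploit exactly the same structural assumptions; yours is more elementary and self-contained (no external hemi-continuity lemma, and no need to verify continuity of $(x,p)\mapsto\vn{\Lambda(x,p,\cdot)}_\Theta$, which the paper's correspondence implicitly requires), and it mirrors the bootstrap localization already used in Step~3 of the comparison principle and in Lemma~\ref{lemma:Hamiltonian_vector_flow_from_Lambda}. The paper's formulation buys a cleaner statement of \emph{why} the supremum may be restricted to a compact set of controls, which it reuses in the subdifferential computation. Your concluding subsequence-of-a-subsequence step is the standard one and is applied correctly.
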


	Before we start with the proof, we give a remark on the generality of its statement and on the assumption that $\Theta$ is Polish.
	\begin{remark}
		The proof of upper semi-continuity of $\cH$ works in general, using continuity properties of $\Lambda$, lower semi-continuity of $(x,\theta) \mapsto \cI(x,\theta)$ and the compact sublevel sets of $\cI(x,\cdot)$. To establish lower semi-continuity, we need that the functionals $\cI$ Gamma converge as a function of $x$. This was established in Proposition \ref{prop:Gamma-convergence-of-I}.
	\end{remark}
	\begin{remark} 
		In the lemma we use a sequential characterization of upper hemi-continuity which holds if $\Theta$ is Polish. This is inspired by the natural formulation of Gamma convergence in terms of sequences. An extension of our results to spaces $\Theta$ beyond the Polish context should be possible to Hausdorff $\Theta$ that are k-spaces in which all compact sets are metrizable.
	\end{remark}
	We will use the following technical result to establish upper semi-continuity of $\cH$.
	\begin{lemma}[Lemma 17.30 in \cite{AlBo06}] \label{lemma:upper_semi_continuity_abstract}
		Let $\cX$ and $\cY$ be two Polish spaces. Let $\phi : \cX \rightarrow \cK(\cY)$, where $\cK(\cY)$ is the space of non-empty compact subsets of $\cY$. Suppose that $\phi$ is upper hemi-continuous, that is if $x_n \rightarrow x$ and $y_n \rightarrow y$ and $y_n \in \phi(x_n)$, then $y \in \phi(x)$. 
		
		Let $f : \text{Graph} (\phi) \rightarrow \bR$ be upper semi-continuous. Then the map $m(x) = \sup_{y \in \phi(x)} f(x,y)$ is upper semi-continuous.
	\end{lemma}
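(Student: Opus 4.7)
The plan is to prove upper semi-continuity of $m$ directly from the definition: fix $x \in \cX$, take an arbitrary sequence $x_n \to x$, and show $\limsup_n m(x_n) \leq m(x)$. By passing to a subsequence (which I will not relabel), I may assume the limsup is realized as a genuine limit. If the limit is $-\infty$ there is nothing to prove, so I will assume it is finite (the case $+\infty$ being ruled out below by the same argument).

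Next I would choose near-optimizers $y_n \in \phi(x_n)$ with $f(x_n, y_n) \geq m(x_n) - 1/n$; since the restriction of the upper semi-continuous function $f$ to $\{x_n\} \times \phi(x_n)$ remains upper semi-continuous and $\phi(x_n)$ is compact, this supremum is in fact attained, but the near-optimizer argument suffices. The crux is to produce a convergent subsequence of $(y_n)$. This is where I invoke upper hemi-continuity in the Aliprantis--Border sense: for a compact-valued correspondence, upper hemi-continuity at $x$ gives that every sequence $y_n \in \phi(x_n)$ with $x_n \to x$ admits a subsequence converging to some $y \in \phi(x)$ (the closed-graph formulation stated in the lemma being the identification of the limit point, while the compact-valuedness together with the topological upper hemi-continuity yields the existence of the subsequence in the first place).

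With $y_{n_k} \to y \in \phi(x)$ extracted, the pair $(x_{n_k}, y_{n_k})$ converges to $(x, y)$ inside $\mathrm{Graph}(\phi)$. Upper semi-continuity of $f$ on the graph then yields
\[
\limsup_k f(x_{n_k}, y_{n_k}) \leq f(x, y) \leq \sup_{y' \in \phi(x)} f(x, y') = m(x),
\]
and combining with the near-optimality $f(x_{n_k}, y_{n_k}) \geq m(x_{n_k}) - 1/n_k$ gives
\[
\limsup_n m(x_n) = \lim_k m(x_{n_k}) \leq \lim_k f(x_{n_k}, y_{n_k}) + \lim_k \tfrac{1}{n_k} \leq m(x),
\]
as required.

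The delicate step is the extraction of the convergent subsequence of $(y_n)$: closed graph alone would not suffice (one can construct correspondences on Polish spaces with closed graph and compact values whose marginal function fails to be upper semi-continuous). What makes it work is the combination of compact-valuedness with the sequential compactness property packaged inside the Aliprantis--Border notion of upper hemi-continuity. Once this subsequence is in hand, the rest reduces to transporting the upper semi-continuity of $f$ along a graph-convergent sequence, which is immediate from the definition.
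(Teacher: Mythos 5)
The paper does not prove this lemma --- it is quoted from Aliprantis--Border (Lemma 17.30) and used as a black box in the proof of Proposition \ref{prop:reg-of-H-and-L:continuity} --- so there is no in-paper argument to compare against; I can only assess your proof on its own terms, and it is correct. Your argument is the standard one: pass to a subsequence realizing $\limsup_n m(x_n)$, pick near-optimizers $y_n \in \phi(x_n)$ (the supremum is even attained, since $f(x_n,\cdot)$ is upper semi-continuous on the non-empty compact set $\phi(x_n)$), extract a subsequence $y_{n_k} \to y \in \phi(x)$, and transport upper semi-continuity of $f$ along the graph. The one genuinely delicate point is the one you identify: the parenthetical characterization of upper hemi-continuity given in the lemma's statement is really the closed-graph property, and closed graph plus compact values does \emph{not} by itself yield the convergent subsequence of $(y_n)$ (e.g.\ $\phi(x)=\{1/x\}$ for $x\neq 0$, $\phi(0)=\{0\}$ on $\cX=\cY=\bR$ has closed graph and compact values, yet with $f(x,y)=\arctan|y|$ the value function $m$ fails to be upper semi-continuous at $0$). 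You correctly invoke the full Aliprantis--Border notion, for which compact-valuedness gives exactly the sequential compactness you need (their Theorem 17.20), and the closed-graph condition then identifies the limit as lying in $\phi(x)$. Two cosmetic remarks: in your final display $\lim_k f(x_{n_k},y_{n_k})$ need not exist and should be a $\limsup_k$ (or pass to a further subsequence), and the a priori case $\limsup_n m(x_n)=+\infty$ is indeed excluded by the same chain of inequalities, as you note. Neither affects the validity of the argument.
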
 
	\begin{proof}[Proof of Proposition~\ref{prop:reg-of-H-and-L:continuity}]
		We start by establishing upper semi-continuity of $\cH$. We argue on the basis of Lemma \ref{lemma:upper_semi_continuity_abstract}. Recall the representation of $\cH$ of \eqref{eqn:regularity_section_H_variational_rep}. Set $\mathcal{X} = E\times\mathbb{R}^d$ for the $(x,p)$ variables, $\mathcal{Y} = \Theta$, and $f(x,p,\theta) = \Lambda(x,p,\theta) - \cI(x,\theta)$ and note that this function is upper semi-continuous by Assumption~\ref{assumption:results:regularity_I}~\ref{item:assumption:I:lsc} and by Assumption \ref{assumption:results:regularity_of_V}~\ref{item:assumption:slow_regularity:continuity}.

		\smallskip
		
		By Assumption \ref{assumption:results:regularity_I}~\ref{item:assumption:I:zero-measure}, we have $\cH(x,p) \geq \Lambda(x,p,\theta_{x}^0)$, where $\theta_{x}^0$ is a control such that $\cI(x,\theta_{x}^0) = 0$.  Thus, it suffices to restrict the supremum over $\theta \in \Theta$ to $\theta \in \phi(x,p)$ where 
		\begin{equation*}
			\phi(x,p) := \left\{\theta \in \Theta \, \middle| \, \cI(x,\theta) \leq 2 \vn{\Lambda(x,p,\cdot)}_\Theta \right\},
		\end{equation*}
		where $\vn{\cdot}_\Theta$ denotes the supremum norm on $\Theta$. Note that $\vn{\Lambda(x,p,\cdot)}_{\Theta} < \infty$ by Assumption \ref{assumption:results:regularity_of_V} \ref{item:assumption:slow_regularity:controlled_growth}. It follows that
		\begin{equation*}
			\mathcal{H}(x,p) = \sup_{\theta\in \phi(x,p)}\left[\Lambda(x,p,\theta)-\mathcal{I}(x,\theta)\right].
		\end{equation*}
		$\phi(x,p)$ is non-empty as $\theta_{x}^0 \in \phi(x,p)$ and it is compact due to Assumption \ref{assumption:results:regularity_I}~\ref{item:assumption:I:compact-sublevelsets}. We are left to show that $\phi$ is upper hemi-continuous.
		
		\smallskip
		
		Thus, let $(x_n,p_n,\theta_n) \rightarrow (x,p,\theta)$ with $\theta_n \in \phi(x_n,p_n)$. We establish that $\theta \in \phi(x,p)$. By \ref{item:assumption:I:lsc} and the definition of $\phi$ we find
		\begin{equation*}
			\cI(x,\theta) \leq \liminf_n \cI(x_n,\theta_n) \leq \liminf_n 2\vn{\Lambda(x_n,p_n,\cdot}_\Theta = 2 \vn{\Lambda(x,p,\cdot)}_\Theta
		\end{equation*}
		which implies indeed that $\theta  \in \phi(x,p)$. Thus, upper semi-continuity follows by an application of Lemma \ref{lemma:upper_semi_continuity_abstract}.

		\smallskip
		
		We proceed with proving lower semi-continuity of $\cH$. Suppose that $(x_n,p_n) \rightarrow (x,p)$, we prove that $\liminf_n \cH(x_n,p_n) \geq \cH(x,p)$.

		Let $\theta$ be the measure such that $\cH(x,p) = \Lambda(x,p,\theta) - \cI(x,\theta)$. We have
		\begin{itemize}
			\item By Proposition \ref{prop:Gamma-convergence-of-I} there are $\theta_n$ such that $\theta_n \rightarrow \theta$ and $\limsup_n \cI(x_n,\theta_n) \leq \cI(x,\theta)$. 
			\item $\Lambda(x_n,p_n,\theta_n)$ converges to $\Lambda(x,p,\theta)$ by Assumption \ref{item:assumption:slow_regularity:continuity}.
		\end{itemize}
		Therefore,
		\begin{align*}
			\liminf_{n\to\infty}\mathcal{H}(x_n,p_n)&\geq \liminf_{n\to\infty} \left[\Lambda(x_n,p_n,\theta_n)-\mathcal{I}(x_n,\theta_n)\right]\\
			&\geq \liminf_{n\to\infty}\Lambda(x_n,p_n,\theta_n)-\limsup_{n\to\infty}\mathcal{I}(x_n,\theta_n)\\
			&\geq \Lambda(x,p,\theta)-\mathcal{I}(x,\theta) = \mathcal{H}(x,p),
		\end{align*}
		establishing that $\mathcal{H}$ is lower semi-continuous.
		
		\smallskip
		
		The Lagrangian $\cL$ is obtained as the supremum over continuous functions. This implies $\cL$ is lower semi-continuous.
	\end{proof}

	\section{A more general continuity estimate} \label{section:continuity_estimate_general}

	In classical literature, the comparison principle for the Hamilton-Jacobi equation $f - \lambda Hf = h$ is often proven using a squared distance as a penalization function. This often works well due to the quadratic structure of the Hamiltonian. In different contexts, e.g. for the Hamiltonians arising from the large deviations of jump processes, this is not natural, see the issues arising in the proofs in \cite{DuIiSo90,Kr17}. In absence of a general method to solve these issues, ad-hoc procedures can be introduced. One such ad-hoc procedure introduced in \cite{Kr17} is to work with multiple penalization functions (in that context $\{\Psi_1,\Psi_2\}$) that explore different parts of the state-space.
	
	\smallskip
	
	Any argument that has been carried out in the main text can be carried out with the generalization of the continuity estimate below.
	
	\begin{definition}
		We say that $\{\Psi_1,\Psi_2\}$, $\Psi_i : E^2 \rightarrow \bR^+$ is a \textit{pair of penalization functions} if $\Psi_i \in C^1(E^2)$ and if $x = y$ if and only if $\Psi_i(x,y) = 0$ for all $i$.
	\end{definition}

	\begin{definition}[Continuity estimate] \label{def:fundamental_inequality_extended}
		Let $\cG: E \times\mathbb{R}^d \times \Theta \rightarrow \bR$, $ (x,p,\theta)\mapsto \cG(x,p,\theta)$ be a function and $\{\Psi_1,\Psi_2\}$ be a pair of penalization functions. Suppose that for each $\varepsilon > 0$ there is a sequence $\alpha_2 \rightarrow \infty$. As before, we suppress the dependence on $\varepsilon$. Suppose that for each $\varepsilon$ and $\alpha_2$ , there is a sequence $\alpha_1 \rightarrow \infty$. We suppress writing the dependence of the sequence $\alpha_1$ on $\varepsilon$ and $\alpha_2$. We write $\alpha = (\alpha_1,\alpha_2)$.
		
		Suppose that for each triplet $(\varepsilon,\alpha_1,\alpha_2)$ as above we have variables $(x_{\varepsilon,\alpha},y_{\varepsilon,\alpha})$ in $E^2$ and variables $\theta_{\varepsilon,\alpha}$ in $\Theta$. We say that this collection is \textit{fundamental} for $\cG$ with respect to $\{\Psi_1,\Psi_2\}$ if:
		\begin{enumerate}[(C1)]
			\item  For each $\varepsilon$, there are compact sets $K_\varepsilon \subseteq E$ and $\widehat{K}_\varepsilon\subseteq\Theta$ such that for all $\alpha$ we have $x_{\varepsilon,\alpha},y_{\varepsilon,\alpha} \in K_\varepsilon$ and $\theta_{\varepsilon,\alpha}\in\widehat{K}_\varepsilon$.
			\item For each $\varepsilon > 0$ and $\alpha_2$ there are limit points $x_{\varepsilon,\alpha_2}, y_{\varepsilon,\alpha_2} \in K_\varepsilon$ of $x_{\varepsilon,\alpha}$ and $y_{\varepsilon,\alpha}$ as $\alpha_1 = \alpha_1(\varepsilon,\alpha_2) \rightarrow \infty$. For each $\varepsilon$ there are limit points $x_\varepsilon,y_\varepsilon$ in $K_\varepsilon$ of $x_{\varepsilon,\alpha_2}$ and $y_{\varepsilon,\alpha_2}$ as $\alpha_2 \rightarrow \infty$. We furthermore have
			\begin{align*}
				& \Psi_1(x_{\varepsilon,\alpha_2},y_{\varepsilon,\alpha_2}) = 0 && \forall \, \varepsilon >0, \, \forall \, \alpha_2, \\
				& \Psi_1(x_\varepsilon,y_\varepsilon) + \Psi_2(x_\varepsilon,y_\varepsilon) = 0, && \forall \, \varepsilon > 0, \\
				& \lim_{\alpha_1 \rightarrow \infty} \alpha_1 \Psi_1(x_{\varepsilon,\alpha_1,\alpha_2},x_{\varepsilon,\alpha_1,\alpha_2}) = 0, && \forall  \, \varepsilon >0, \, \forall \, \alpha_2, \\
				& \lim_{\alpha_2 \rightarrow \infty} \alpha_2 \Psi_2(x_{\varepsilon,\alpha_1},x_{\varepsilon,\alpha_1}) = 0, && \forall \, \varepsilon > 0.
			\end{align*}
			\item We have
			\begin{align} 
				& \sup_{\alpha_2} \sup_{\alpha_1} \cG\left(y_{\varepsilon,\alpha}, - \sum_{i=1}^2\alpha_i (\nabla \Psi_i(x_{\varepsilon,\alpha},\cdot))(y_{\varepsilon,\alpha}),\theta_{\varepsilon,\alpha}\right) < \infty, \label{eqn:control_on_Gbasic_sup_extended} \\
				& \inf_{\alpha_2} \inf_{\alpha_1} \cG\left(x_{\varepsilon,\alpha}, \sum_{i=1}^2\alpha_i (\nabla \Psi_i(\cdot,y_{\varepsilon,\alpha}))(y_{\varepsilon,\alpha}),\theta_{\varepsilon,\alpha}\right) > - \infty. \label{eqn:control_on_Gbasic_inf_extended} 	
			\end{align} \label{itemize:funamental_inequality_control_upper_bound_extended}
			In other words, the operator $\cG$ evaluated in the proper momenta is eventually bounded from above and from below.
		\end{enumerate}
		We say that $\cG$ satisfies the \textit{continuity estimate} if for every fundamental collection of variables we have for each $\varepsilon > 0$ that
		\begin{multline}\label{equation:Xi_negative_liminf_extended}
			\liminf_{\alpha_2 \rightarrow \infty} \liminf_{\alpha_1 \rightarrow \infty} \cG\left(x_{\varepsilon,\alpha},\sum_{i=1}^2 \alpha_i \nabla \Psi_i(\cdot,y_{\varepsilon,\alpha})(x_{\varepsilon,\alpha}),\theta_{\varepsilon,\alpha}\right) \\
			- \cG\left(y_{\varepsilon,\alpha},- \sum_{i=1}^2 \alpha_i \nabla \Psi_i(x_{\varepsilon,\alpha},\cdot)(y_{\varepsilon,\alpha}),\theta_{\varepsilon,\alpha}\right) \leq 0.
		\end{multline}
	\end{definition}

	\section{Differential inclusions} \label{appendix:differential_inclusions}
	
	To establish that Condition 8.11 of \cite{FK06} is satisfied in the proof of Theorem \ref{theorem:existence_of_viscosity_solution}, we need to solve a differential inclusion. The following appendix is based on \cite{De92,Ku00} and is a copy of the one in \cite{KrMa20}. We state it for completeness.
	
	\smallskip
	
	Let $D \subseteq \bR^d$ be a non-empty set. A \textit{multi-valued mapping} $F : D \rightarrow 2^{\bR^d} \setminus \{\emptyset\}$ is a map that assigns to every $x \in D$ a set $F(x) \subseteq \bR^d$, $F(x) \neq \emptyset$.
	
	\begin{definition}
		Let $I \subseteq \bR$ be an interval with $0 \in I$, $D\subseteq \bR^d$, $x \in D$ and $F : D \rightarrow 2^{\bR^d} \setminus \emptyset$ a multi-valued mapping. A function $\gamma$ such that
		\begin{enumerate}[(a)]
			\item $\gamma : I \rightarrow D$ is absolutely continuous,
			\item $\gamma(0) = x$,
			\item $\dot{\gamma}(t) \in F(\gamma(t))$ for almost every $t \in I$
		\end{enumerate}
		is  called a \textit{solution of the differential inclusion} $\dot{\gamma} \in F(\gamma)$ a.e., $\gamma(0) = x$.
	\end{definition}
	
	If we assume sufficient regularity on the multi-valued mapping $F$, we can ensure the existence of a solution to differential inclusions that remain inside $D$.
	
	\begin{definition}
		Let $D \subseteq \bR^d$ be a non-empty set and let $F : D \rightarrow 2^{\bR^d} \setminus \{\emptyset\}$ be a multi-valued mapping.
		\begin{enumerate}[(i)]
			\item We say that $F$ is \textit{closed, compact or convex valued} if each set $F(x)$, $x \in D$ is closed, compact or convex, respectively.
			\item We say that $F$ is \textit{upper hemi-continuous at $x \in D$} if for each neighbourhood $\cU$ of $F(x)$, there is a neighbourhood $\cV$ of $x$ in $D$ such that $F(\cV) \subseteq \cU$. 
			We say that $F$ is \textit{upper hemi-continuous} if it is upper hemi-continuous at every point. $F$ is upper hemi-continuous if and only if for each sequence $x_n \rightarrow x$ in $D$ and $\xi_n \in F(x_n)$ such that $\xi_n \rightarrow \xi$ we have $\xi \in F(x)$.
		\end{enumerate}
	\end{definition}
	
	\begin{definition}
		Let $D \subseteq \bR^d$ be a closed non-empty set. The tangent cone to $D$ at $x$ is
		\begin{equation*}
			T_D(x) := \left\{z \in \bR^d \, \middle| \, \liminf_{\lambda \downarrow 0} \frac{d(y + \lambda z, D)}{\lambda} = 0\right\}.
		\end{equation*}
		The set $T_D(x)$ is sometimes called the  the \textit{Bouligand cotingent cone}.
	\end{definition}
	
	\begin{lemma}[Proposition 4.1 in \cite{De92}] \label{proposition:tangent_cone_convex}
		Let $D \subseteq \bR^d$ be a closed, convex, non-empty set. Then the set $T_D(x)$ is convex and contains $0$.
	\end{lemma}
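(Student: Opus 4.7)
My plan is to identify $T_D(x)$ with the closure of the radial cone, which is manifestly convex, and then to observe that $0$ lies in it trivially. Throughout I assume $x \in D$ (otherwise the definition is vacuous in the sense used in the paper, since we only apply the tangent cone at points of $E$).

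First I would introduce the radial cone
\[
R_D(x) := \bigcup_{\lambda > 0} \lambda^{-1}(D - x) = \{\lambda^{-1}(d-x) \,:\, \lambda > 0,\, d \in D\},
\]
which is convex because $D$ is convex (given two elements $\lambda_i^{-1}(d_i - x)$, one can rewrite both with a common small $\lambda$ by rescaling within $D$ using convexity and the fact that $x \in D$, producing a convex combination that again lies in $R_D(x)$). I would then claim that $T_D(x) = \overline{R_D(x)}$, which immediately gives convexity of $T_D(x)$ since the closure of a convex set is convex. Containment of $0$ follows at once: taking $d = x$ and any $\lambda > 0$ gives $0 \in R_D(x) \subseteq T_D(x)$.

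For the inclusion $R_D(x) \subseteq T_D(x)$, given $z = \lambda^{-1}(d - x)$ with $d \in D$, I would use convexity of $D$ to write $x + \mu z = (1 - \mu/\lambda)x + (\mu/\lambda) d \in D$ for all $\mu \in (0, \lambda]$, so $d(x + \mu z, D) = 0$ for all sufficiently small $\mu$ and therefore the liminf defining $T_D(x)$ vanishes. Taking the closure and using that $T_D(x)$ is closed (a direct $\varepsilon/3$ argument on the defining liminf) gives $\overline{R_D(x)} \subseteq T_D(x)$.

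For the reverse inclusion $T_D(x) \subseteq \overline{R_D(x)}$, I would take $z \in T_D(x)$, pick a sequence $\lambda_n \downarrow 0$ with $\lambda_n^{-1} d(x + \lambda_n z, D) \to 0$, and then select near-projections $d_n \in D$ satisfying $|x + \lambda_n z - d_n| \leq d(x + \lambda_n z, D) + \lambda_n^2$. Setting $z_n := \lambda_n^{-1}(d_n - x) \in R_D(x)$, the estimate
\[
|z - z_n| = \lambda_n^{-1}|x + \lambda_n z - d_n| \leq \lambda_n^{-1} d(x + \lambda_n z, D) + \lambda_n \longrightarrow 0
\]
shows $z \in \overline{R_D(x)}$, completing the identification. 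The only genuine subtlety is verifying that $R_D(x)$ is convex with a common scaling $\lambda$; this is where convexity of $D$ together with $x \in D$ is used in an essential way, but it is a short direct computation rather than a deep obstacle.
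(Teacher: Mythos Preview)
Your proof is correct. The paper does not actually supply its own proof of this lemma; it is stated in the appendix as a citation to Proposition~4.1 of Deimling~\cite{De92} and used as a black box. Your argument---identifying $T_D(x)$ with the closure of the radial cone $R_D(x) = \bigcup_{\lambda>0}\lambda^{-1}(D-x)$, showing $R_D(x)$ is convex using $x\in D$ and convexity of $D$, and then using that the closure of a convex set is convex---is the standard route and is exactly what one finds in references such as Deimling. The only remark worth making is cosmetic: since $D$ is closed you may take exact nearest points $d_n$ rather than near-projections with the extra $\lambda_n^2$ slack, and the closedness of $T_D(x)$ actually holds for arbitrary sets $D$ (your $\varepsilon/3$ sketch works verbatim without convexity), so the convexity hypothesis is used only to make $R_D(x)$ convex.
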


	\begin{lemma}[Theorem 2.2.1 in \cite{Ku00}, Lemma 5.1 in \cite{De92}] \label{lemma:solve_differential_inclusion}
		Let $D \subseteq \bR^d$ be closed and let $F : D \rightarrow 2^{\bR^d} \setminus \{\emptyset\}$ satisfy
		\begin{enumerate}[(a)]
			\item $F$ has closed convex values and is upper hemi-continuous;
			\item for every $x$, we have $F(x) \cap T_D(x) \neq \emptyset$;
			\item $F$ has bounded growth: there is some $c > 0$ such that $\vn{F(x)} = \sup\left\{|z| \, \middle| \, z \in F(x) \right\} \leq c(1 + |x|)$ for all $x \in D$.
		\end{enumerate}
		Then the differential inclusion $\dot{\gamma} \in F(\gamma)$ has a solution on $\bR^+$ for every starting point $x \in D$.
	\end{lemma}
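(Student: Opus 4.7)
The plan is to follow the standard viability-theoretic approach of Deimling, constructing a sequence of approximate trajectories by a tangential Euler scheme and then extracting a uniformly convergent subsequence whose limit is the desired solution. The bounded-growth assumption (c) will give a priori control forcing the trajectories to exist on all of $\mathbb{R}^+$, while the convexity and upper hemi-continuity of $F$ together with Mazur's lemma will be used to conclude that the limit satisfies the inclusion $\dot{\gamma}(t)\in F(\gamma(t))$ almost everywhere.

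\textbf{Step 1: A priori bound via Gronwall.} Suppose, inductively, we have any candidate absolutely continuous trajectory $\gamma$ with $\dot{\gamma}(t)\in F(\gamma(t))$ almost everywhere and $\gamma(0)=x_0$. By (c), $|\dot{\gamma}(t)|\leq c(1+|\gamma(t)|)$, so Gronwall gives $|\gamma(t)|\leq (1+|x_0|)e^{ct}-1$ on any interval where $\gamma$ is defined. This allows us to restrict attention, on any fixed $[0,T]$, to trajectories taking values in the compact set $K_T:=\overline{B_{R_T}(0)}\cap D$ with $R_T=(1+|x_0|)e^{cT}$, on which $F$ is uniformly bounded by some $M_T<\infty$.

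\textbf{Step 2: Construction of approximate solutions.} Fix $T>0$, $n\in\mathbb{N}$ and a partition $0=t_0^n<t_1^n<\cdots<t_{k_n}^n=T$ with mesh $1/n$. Define $\gamma_n(t_0^n):=x_0$; inductively, having defined $\gamma_n(t_i^n)\in D$, use assumption (b) to pick $v_i^n\in F(\gamma_n(t_i^n))\cap T_D(\gamma_n(t_i^n))$. By definition of the contingent cone, there exists a step size $h_i^n\leq 1/n$ and a point $\gamma_n(t_i^n+h_i^n)\in D$ such that
\begin{equation*}
\bigl|\gamma_n(t_i^n+h_i^n)-\gamma_n(t_i^n)-h_i^n v_i^n\bigr|\leq h_i^n/n .
\end{equation*}
Interpolate linearly on each subinterval, and choose the next partition point at $t_i^n+h_i^n$. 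This produces an absolutely continuous $\gamma_n:[0,T]\to D$ with $|\dot{\gamma}_n(t)|\leq M_T+1$ everywhere, thanks to Step 1.

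\textbf{Step 3: Passage to the limit.} The sequence $\{\gamma_n\}$ is equibounded and equi-Lipschitz on $[0,T]$. By Arzelà-Ascoli, a subsequence converges uniformly to some $\gamma\in C([0,T];D)$ (closedness of $D$ preserves membership). The derivatives $\{\dot{\gamma}_n\}$ are bounded in $L^\infty([0,T];\mathbb{R}^d)$, hence, along a further subsequence, converge weakly in $L^2$ to $\dot{\gamma}$. By Mazur's lemma, convex combinations of the $\dot{\gamma}_{n_k}$ converge strongly, hence pointwise almost everywhere along a subsequence. Using the upper hemi-continuity of $F$, the closedness and convexity of each value $F(y)$, together with the estimate $|\dot{\gamma}_n(t)-v_{i(t)}^n|\leq 1/n$ and $v_{i(t)}^n\in F(\gamma_n(t_{i(t)}^n))$ with $\gamma_n(t_{i(t)}^n)\to \gamma(t)$, we conclude $\dot{\gamma}(t)\in F(\gamma(t))$ for almost every $t\in[0,T]$.

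\textbf{Step 4: Extension to $\mathbb{R}^+$.} Repeating the above construction on $[0,1],[1,2],\ldots$ and concatenating (each restart taking as initial condition the endpoint of the previous piece, which still lies in $D$), we obtain a solution defined on all of $\mathbb{R}^+$; the a priori bound of Step 1 ensures no finite-time blow-up, so the concatenation is well-defined.

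The main technical obstacle is Step 3: passing the inclusion $v_i^n\in F(\gamma_n(t_i^n))$ through the limit requires combining weak convergence of derivatives with upper hemi-continuity of $F$ along a pointwise a.e.\ convergent subsequence of convex combinations, which is precisely where convexity of the values $F(x)$ and Mazur's lemma enter. A secondary subtlety is Step 2: verifying that the Euler-type scheme with tangential velocities actually keeps $\gamma_n$ inside $D$, which is the content of the contingent-cone viability theorem and uses (b) essentially.
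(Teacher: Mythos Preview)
The paper does not prove this lemma at all: it is stated in the appendix with attribution to Theorem~2.2.1 of \cite{Ku00} and Lemma~5.1 of \cite{De92}, and is used as a black box in the verification of Condition~8.11 in Section~\ref{section:construction-of-viscosity-solutions}. So there is no ``paper's own proof'' to compare against.

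Your sketch is essentially the standard argument from Deimling's book, and the overall strategy (tangential Euler scheme, Arzel\`a--Ascoli, Mazur/weak-limit argument using convexity and upper hemi-continuity, Gronwall for global existence) is correct. Two places deserve more care if you want a complete proof rather than a plan. First, in Step~2 the contingent cone only guarantees arbitrarily small admissible step sizes $h_i^n$, not step sizes bounded below; as written nothing prevents $\sum_i h_i^n$ from accumulating before reaching $T$. Deimling handles this by a more careful selection (or a Zorn-type maximality argument) showing that the approximate trajectory can always be continued. Second, in Step~3 the passage from weak-$L^2$ convergence of $\dot{\gamma}_n$ to $\dot{\gamma}(t)\in F(\gamma(t))$ a.e.\ is not quite what Mazur's lemma gives directly: Mazur yields strong convergence of convex combinations, but those combinations mix different time-points and different base-points $\gamma_{n_k}(t_{i(t)}^{n_k})$, so you need the standard additional argument that for any closed convex set $C$, the set $\{v\in L^2:\ v(t)\in C\ \text{a.e.}\}$ is weakly closed, applied locally together with upper hemi-continuity. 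Both points are routine but should be acknowledged.
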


	\section{Pseudo-coercivity for exponential Hamiltonians} \label{appendix:pseudo_coercive}
	
	In this section we consider the notions of pseudo-coercivity and the continuity estimate for the Hamiltonian-Jacobi equation $f - \Lambda f = h$ on $E = [0,\infty)$, with $h \in C_b(E)$ for the Hamiltonian $\Lambda f(x) = \Lambda (x,f'(x))$ with
	\begin{equation} \label{eqn:one_sided_jumps}
		\Lambda(x,p) = x \left[e^{-p} - 1\right]
	\end{equation} 
	which is a simplified version of the Hamiltonian given in the introduction and Proposition \ref{prop:mean-field-coupled-to-diffusion}. 
	
	\smallskip

	The pseudo-coercivity estimate of \cite[Pages 34 and 35]{Ba94} translates into the present context as
	\begin{equation} \label{eqn:pseudo_coercivity_estimate}
		\left|\Lambda(x,p) - \Lambda(y,p) \right| \leq m\left(|x-y|(1+|p|)\right) Q(x,y,p)
	\end{equation}
	where $Q(x,y,p) = \max\left(\Psi(H(x,p)),\Psi(H(y,p))\right)$, where $m : [0,\infty) \rightarrow [0,\infty)$ is such that $\lim_{t \downarrow 0} m(t) = 0$ and $\Psi : \bR \rightarrow [0,\infty)$ is continuous.
	
	\smallskip
	
	We first make some general remarks on the relation between pseudo-coercivity and the continuity estimate. We then show that for \ref{eqn:one_sided_jumps}, pseudo-coercivity fails, whereas the continuity estimate holds (in the case of a single $\theta$).
	
	\smallskip
	
	The continuity estimate (for a single $\theta$) is more general than pseudo-coercivity in the sense that:
	\begin{itemize}
		\item It does not rely on the fact that you use a multiple of $|x-y|^2$ as a penalization in the comparison principle. This is of importance for Hamiltonians $\Lambda$ on e.g. the set of probability measures $\cP(\{1,\dots,q\})$ with $q \in \{3,4,\dots\}$ like in Proposition \ref{proposition:continuity_estimate_directional_extended}.
		\item It removes the necessity of taking absolute values in the estimate. This last fact is important as can be seen for the Hamilton-Jacobi equation for $f - \lambda \Lambda f = h$, $h \in C_b$, $\lambda > 0$ for the Hamiltonian $\Lambda f(x) = \Lambda(x,f'(x))$ with 
		\begin{equation*}
			\Lambda(x,p) = x \left[e^{-p} - 1\right], \qquad x \in [0,\infty), p \in \bR,
		\end{equation*}
		for which the comparison principle holds, whereas for $\widetilde{\Lambda}(x,p) := \Lambda(x,-p)$ it fails. Our continuity estimate holds for $\Lambda$, but not for $\widetilde{\Lambda}$. Pseudo-coercivity fails for both as we explain below.
	\end{itemize}

	We first show that pseudo-coercivity fails for the Hamilton-Jacobi equation in terms of $\Lambda$ of \eqref{eqn:one_sided_jumps}.

	\begin{lemma}
		$\Lambda$ is not pseudo-coercive.
	\end{lemma}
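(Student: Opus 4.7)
The plan is to exhibit a single sequence $(x_n,y_n,p_n)$ that defeats the estimate \eqref{eqn:pseudo_coercivity_estimate} simultaneously for \emph{every} admissible modulus $m$ and every continuous $\Psi$. The guiding idea is to pin the two values $\Lambda(x_n,p_n)$ and $\Lambda(y_n,p_n)$ to fixed constants along the sequence. Once this is done, $\Psi$ evaluated at these values contributes only a fixed finite constant to the right-hand side of \eqref{eqn:pseudo_coercivity_estimate}, so the asymptotics of the right-hand side is governed entirely by the modulus $m$. It then suffices to arrange that $|x_n-y_n|(1+|p_n|)\to 0$ so that $m(\cdot)\to 0$, while the left-hand side stays bounded below by a positive constant.

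Concretely, I propose to fix any $N>0$ and take
\begin{equation*}
y_n := 0, \qquad x_n := 1/n, \qquad p_n := -\log(1+Nn).
\end{equation*}
Since $\Lambda(0,p)=0$ for all $p$ and $e^{-p_n}=1+Nn$, a direct computation gives $\Lambda(x_n,p_n) = (1/n)\bigl((1+Nn)-1\bigr) = N$. Hence the left-hand side of \eqref{eqn:pseudo_coercivity_estimate} equals $|N-0|=N$ for every $n$. For the argument of $m$ we get $|x_n-y_n|(1+|p_n|) = (1/n)\bigl(1+\log(1+Nn)\bigr)\to 0$, so $m\bigl(|x_n-y_n|(1+|p_n|)\bigr)\to 0$ by the assumed behavior of $m$ at zero. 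Meanwhile, $Q(x_n,y_n,p_n) = \max\bigl(\Psi(N),\Psi(0)\bigr)$ is a fixed finite constant independent of $n$. Therefore the right-hand side tends to $0$, whereas the left-hand side equals $N>0$ for all $n$, contradicting \eqref{eqn:pseudo_coercivity_estimate}.

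There is no real obstacle: the lemma is entirely a matter of choosing the scaling correctly. The delicate point, and the only thing that requires some care, is the simultaneous balancing of three requirements on $p_n$: it must be negative and large enough in absolute value to make $e^{-p_n}$ grow like $n$ (so that $x_n(e^{-p_n}-1)$ stays of order one), while growing slowly enough (only logarithmically in $n$) that $(1/n)(1+|p_n|)$ still tends to zero. The choice $p_n=-\log(1+Nn)$ achieves exactly this. Because the right-hand side of \eqref{eqn:pseudo_coercivity_estimate} then decouples from $\Psi$ up to a fixed multiplicative constant, the argument is uniform in the pair $(m,\Psi)$, which is precisely what is needed to rule out pseudo-coercivity.
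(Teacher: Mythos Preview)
Your proof is correct and follows the same core idea as the paper: pin one variable to $0$ (where $\Lambda$ vanishes identically in $p$), choose the other point and the momentum so that $\Lambda$ there stays equal to a fixed positive constant, and observe that the logarithmic growth of $|p_n|$ is slow enough that $|x_n-y_n|(1+|p_n|)\to 0$. Your parametrization is in fact more elementary: the paper takes $x_\alpha=0$, $y_\alpha=\alpha^{-1}W(\alpha)$ with $W$ the Lambert function and $p_\alpha=\alpha(x_\alpha-y_\alpha)$, which is motivated by the structure of the doubling-of-variables argument but requires the inversion $y e^{\alpha y}=1$; you instead write down $p_n=-\log(1+Nn)$ directly, which makes the computation transparent and avoids special functions altogether. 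Both routes pin the values $\Lambda(x_n,p_n)$ and $\Lambda(y_n,p_n)$ to fixed constants so that $Q$ is eventually bounded independently of $\Psi$, and both then kill the $m$-factor by sending its argument to zero.
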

	
	\begin{proof}
		A counterexample suffices. 
		
		Let $W$ be the Lambert function. That is, $W$ is the inverse of $\phi$ where $\phi(x) = xe^x$. Next, let $x_\alpha = 0$, $y_\alpha = \alpha^{-1} W(\alpha)$. Then we have $y_\alpha \rightarrow 0$, $\alpha y_\alpha \rightarrow \infty$, $\alpha y_\alpha^2 = \frac{W(\alpha)^2}{\alpha} \rightarrow 0$ and
		\begin{equation*}
			y_\alpha e^{ \alpha y_\alpha} = 1.
		\end{equation*}
		Thus, with~$p_\alpha=\alpha(x_\alpha-y_\alpha)=-\alpha y_\alpha$,
		\begin{equation} \label{eqn:estimate_one_sided_jumps}
			\Lambda(x_\alpha,p_\alpha) - \Lambda(y_\alpha,p_\alpha) = y_\alpha -1 \rightarrow - 1
		\end{equation}
		contradicting \eqref{eqn:pseudo_coercivity_estimate}.
	\end{proof}

	Note that the comparison principle for $f - \lambda \Lambda f = h$ does in fact hold. This is due to the fact that one only needs to establish that $\liminf_{\alpha \rightarrow \infty} \Lambda(x_\alpha,p_\alpha) - \Lambda(y_\alpha,p_\alpha) \leq 0$ for appropriately chosen $x_\alpha,y_\alpha,p_\alpha$ without absolute value signs, see Proposition  \ref{proposition:continuity_estimate_directional}. The removal of absolute value signs is essential: the comparison principle for $f - \widetilde{\Lambda} f = h$ for
	\begin{equation*}
		\widetilde{\Lambda} (x,p) = x \left[e^p - 1\right]
	\end{equation*}
	fails. This is related to the statement that an associated large deviation principle fails, see Example E of \cite{ShWe05}.

	
\bibliographystyle{abbrv}
	\bibliography{KraaijBib}

\begin{thebibliography}{10}

\bibitem{AlBo06}
C.~D. Aliprantis and K.~C. Border.
\newblock {\em Infinite dimensional analysis}.
\newblock Springer, Berlin, third edition, 2006.
\newblock A hitchhiker's guide.

\bibitem{BaCD97}
M.~{Bardi} and I.~{Capuzzo-Dolcetta}.
\newblock {\em {Optimal control and viscosity solutions of
  Hamilton-Jacobi-Bellman equations.}}
\newblock Boston, MA: Birkh\"auser, 1997.

\bibitem{BaCeGh15}
M.~Bardi, A.~Cesaroni, and D.~Ghilli.
\newblock Large deviations for some fast stochastic volatility models by
  viscosity methods.
\newblock {\em Discrete Contin. Dyn. Syst.}, 35(9):3965--3988, 2015.

\bibitem{Ba94}
G.~Barles.
\newblock {\em Solutions de viscosit\'{e} des \'{e}quations de
  {H}amilton-{J}acobi}, volume~17 of {\em Math\'{e}matiques \& Applications
  (Berlin) [Mathematics \& Applications]}.
\newblock Springer-Verlag, Paris, 1994.

\bibitem{BuDu19}
A.~Budhiraja and P.~Dupuis.
\newblock {\em {A}nalysis and {A}pproximation of {R}are {E}vents:
  {R}epresentations and {W}eak {C}onvergence {M}ethods}, volume~94 of {\em
  Probability Theory and Stochastic Modelling}.
\newblock Springer, 2019.

\bibitem{BuDuGa18}
A.~Budhiraja, P.~Dupuis, and A.~Ganguly.
\newblock Large deviations for small noise diffusions in a fast markovian
  environment.
\newblock {\em Electron. J. Probab.}, 23:33 pp., 2018.

\bibitem{CoKr17}
F.~Collet and R.~C. Kraaij.
\newblock Dynamical moderate deviations for the {Curie-Weiss} model.
\newblock {\em Stochastic Processes and their Applications}, 127(9):2900 --
  2925, 2017.

\bibitem{CIL92}
M.~G. {Crandall}, H.~{Ishii}, and P.-L. {Lions}.
\newblock {User's guide to viscosity solutions of second order partial
  differential equations.}
\newblock {\em {Bull. Am. Math. Soc., New Ser.}}, 27(1):1--67, 1992.

\bibitem{CuDL07}
A.~Cutr\`{i} and F.~Da~Lio.
\newblock Comparison and existence results for evolutive non-coercive
  first-order {Hamilton-Jacobi} equations.
\newblock {\em ESAIM: Control, Optimisation and Calculus of Variations},
  13(3):484?502, 2007.

\bibitem{DLLe11}
F.~Da~Lio and O.~Ley.
\newblock Convex {H}amilton-{J}acobi equations under superlinear growth
  conditions on data.
\newblock {\em Appl. Math. Optim.}, 63(3):309--339, 2011.

\bibitem{De92}
K.~Deimling.
\newblock {\em Multivalued differential equations}, volume~1 of {\em De Gruyter
  Series in Nonlinear Analysis and Applications}.
\newblock Walter de Gruyter \& Co., Berlin, 1992.

\bibitem{DZ98}
A.~Dembo and O.~Zeitouni.
\newblock {\em Large Deviations Techniques and Applications}.
\newblock Springer-Verlag, second edition, 1998.

\bibitem{Ho08}
F.~Den~Hollander.
\newblock {\em Large deviations}, volume~14.
\newblock American Mathematical Soc., 2008.

\bibitem{DoVa75b}
M.~D. Donsker and S.~R.~S. Varadhan.
\newblock Asymptotic evaluation of certain {M}arkov process expectations for
  large time, i.
\newblock {\em Communications on Pure and Applied Mathematics}, 28(1):1--47,
  1975.

\bibitem{DoVa75a}
M.~D. Donsker and S.~R.~S. Varadhan.
\newblock On a variational formula for the principal eigenvalue for operators
  with maximum principle.
\newblock {\em Proceedings of the National Academy of Sciences},
  72(3):780--783, 1975.

\bibitem{DuIiSo90}
P.~Dupuis, H.~Ishii, and H.~M. Soner.
\newblock A viscosity solution approach to the asymptotic analysis of queueing
  systems.
\newblock {\em The Annals of Probability}, 18(1):pp. 226--255, 1990.

\bibitem{EvSo89}
L.~C. Evans and P.~E. Souganidis.
\newblock A {PDE} approach to certain large deviation problems for systems of
  parabolic equations.
\newblock {\em Ann. Inst. H. Poincar\'{e} Anal. Non Lin\'{e}aire},
  6(suppl.):229--258, 1989.
\newblock Analyse non lin\'{e}aire (Perpignan, 1987).

\bibitem{FeFoKu12}
J.~Feng, J.-P. Fouque, and R.~Kumar.
\newblock Small-time asymptotics for fast mean-reverting stochastic volatility
  models.
\newblock {\em Ann. Appl. Probab.}, 22(4):1541--1575, 08 2012.

\bibitem{FK06}
J.~Feng and T.~G. Kurtz.
\newblock {\em Large Deviations for Stochastic Processes}.
\newblock American Mathematical Society, 2006.

\bibitem{FlSo86}
W.~H. Fleming and P.~E. Souganidis.
\newblock {PDE}-viscosity solution approach to some problems of large
  deviations.
\newblock {\em Annali della Scuola Normale Superiore di Pisa - Classe di
  Scienze}, Ser. 4, 13(2):171--192, 1986.

\bibitem{Gh18}
{Ghilli, Daria}.
\newblock Viscosity methods for large deviations estimates of multiscale
  stochastic processes.
\newblock {\em ESAIM: COCV}, 24(2):605--637, 2018.

\bibitem{HULe01}
J.-B. Hiriart-Urruty and C.~Lemar\'{e}chal.
\newblock {\em Fundamentals of convex analysis}.
\newblock Grundlehren Text Editions. Springer-Verlag, Berlin, 2001.
\newblock Abridged version of {{\i}t Convex analysis and minimization
  algorithms. I} [Springer, Berlin, 1993; MR1261420 (95m:90001)] and {{\i}t II}
  [ibid.; MR1295240 (95m:90002)].

\bibitem{Kr17}
R.~C. Kraaij.
\newblock Flux large deviations of weakly interacting jump processes via
  well-posedness of an associated {Hamilton}-{Jacobi} equation.
\newblock {\em To appear in Bernoulli}, 2017.

\bibitem{Kr19}
R.~C. Kraaij.
\newblock A general convergence result for viscosity solutions of
  {Hamilton-Jacobi} equations and non-linear semigroups.
\newblock {\em To appear in Journal of Functional Analysis}, 2019.

\bibitem{KrMa20}
R.~C. Kraaij and L.~Mah\'{e}.
\newblock Well-posedness of {Hamilton-Jacobi} equations in population dynamics
  and applications to large deviations.
\newblock {\em Stochastic Processes and their Applications}, 2020.

\bibitem{KrSchl20}
R.~C. Kraaij and M.~C. Schlottke.
\newblock A large deviation principle for {Markovian} slow-fast systems.
\newblock {\em preprint; ArXiv:2011.05686}, 2020.

\bibitem{KuPo17}
R.~Kumar and L.~Popovic.
\newblock Large deviations for multi-scale jump-diffusion processes.
\newblock {\em Stochastic Processes and their Applications}, 127(4):1297 --
  1320, 2017.

\bibitem{Ku00}
M.~Kunze.
\newblock {\em Non-Smooth Dynamical Systems}.
\newblock Number nr. 1744 in Lecture Notes in Mathematics. Springer, 2000.

\bibitem{PeSc19}
M.~A. Peletier and M.~C. Schlottke.
\newblock {Large-deviation principles of switching Markov processes via
  Hamilton-Jacobi equations}.
\newblock {\em preprint; ArXiv:1901.08478}, 2019.

\bibitem{Pi85b}
R.~G. Pinsky.
\newblock The {$ I $}-{F}unction for {D}iffusion {P}rocesses with {B}oundaries.
\newblock {\em The Annals of Probability}, 13(3):676--692, 1985.

\bibitem{Pi85a}
R.~G. Pinsky.
\newblock {On evaluating the Donsker-Varadhan I-function}.
\newblock {\em The Annals of Probability}, pages 342--362, 1985.

\bibitem{Pi07}
R.~G. Pinsky.
\newblock Regularity properties of the {D}onsker-{V}aradhan rate functional for
  non-reversible diffusions and random evolutions.
\newblock {\em Stoch. Dyn.}, 7(2):123--140, 2007.

\bibitem{ShWe05}
A.~Shwartz and A.~Weiss.
\newblock Large deviations with diminishing rates.
\newblock {\em Mathematics of Operations Research}, 30(2):281--310, 2005.

\end{thebibliography}

\end{document}